\theoremstyle{plain}% default
\newtheorem{theorem}{Theorem}
\newtheorem{lemma}{Lemma}[section]
\newtheorem{corollaryP}[lemma]{Corollary}
\newtheorem{propositionP}[lemma]{Proposition}
\newtheorem{proposition}{Proposition}
\newtheorem{corollary}{Corollary}
\theoremstyle{definition}
\newtheorem*{condition}{Condition}
\theoremstyle{remark}
\newtheorem{remark}{Remark}%[section]
\def\R{\ensuremath{\mathbb R}}
\def\N{\ensuremath{\mathbb N}}
\def\I{\ensuremath{{\bf 1}}}
\def\e{\ensuremath{\text e}}
\def\ev{\ensuremath{E\!V}}
\def\S{\ensuremath{\mathcal S}}
\def\RR{\ensuremath{\mathcal R}}
\def\B{\ensuremath{\mathcal BC}}
\def\l{{\rm Leb}}
\def\O{\ensuremath{O}}
\def\n{\ensuremath{n}}
\def\v{\ensuremath{v}}
\def\X{\mathcal{X}}
\def\ie{{\em i.e.,} }
\def\eg{{\em e.g.} }
\def\E{\mathbb E}
\def\eveps{\ensuremath{{\mathcal E}_{\v}(\epsilon)}}
\def\crit{\text{Crit}}
\def\dist{\ensuremath{\mbox{dist}}}
\def\st{such that }
\numberwithin{equation}{section}
\begin{document}

\title[Hitting Times and Extreme Values] {Hitting Time Statistics and Extreme Value Theory}

\author[A. C. M. Freitas]{Ana Cristina Moreira Freitas}
\address{Ana Cristina Moreira Freitas\\ Centro de Matem\'{a}tica \&
Faculdade de Economia da Universidade do Porto\\ Rua Dr. Roberto Frias \\
4200-464 Porto\\ Portugal} \email{amoreira@fep.up.pt}

\author[J. M. Freitas]{Jorge Milhazes Freitas}
\address{Jorge Milhazes Freitas\\ Centro de Matem\'{a}tica da Universidade do Porto\\ Rua do
Campo Alegre 687\\ 4169-007 Porto\\ Portugal}
\email{jmfreita@fc.up.pt}
\urladdr{http://www.fc.up.pt/pessoas/jmfreita}

\author[M. Todd]{Mike Todd}
\address{Mike Todd\\ Centro de Matem\'{a}tica da Universidade do Porto\\ Rua do
Campo Alegre 687\\ 4169-007 Porto\\ Portugal} \email{mtodd@fc.up.pt}
\urladdr{http://www.fc.up.pt/pessoas/mtodd}

\thanks{JMF is partially supported by POCI/MAT/61237/2004 and MT
is supported by FCT grant SFRH/BPD/26521/2006. All three authors are
supported by FCT through CMUP}

\date{\today}

\keywords{Return Time Statistics, Extreme Value Theory, Non-uniform
hyperbolicity, Interval maps} \subjclass[2000]{37A50, 37C40, 60G10,
60G70, 37B20, 37D25, 37E05}

\begin{abstract}
We consider discrete time dynamical systems and show the link
between Hitting Time Statistics (the distribution of the first time
points land in asymptotically small sets) and Extreme Value Theory
(distribution properties of the partial maximum of stochastic
processes). This relation allows to study Hitting Time Statistics
with tools from Extreme Value Theory, and vice versa.  We apply
these results to non-uniformly hyperbolic systems and prove that a
multimodal map with an absolutely continuous invariant measure must
satisfy the classical extreme value laws (with no extra condition on
the speed of mixing, for example).  We also give applications of our
theory to higher dimensional examples, for which we also obtain
classical extreme value laws and exponential hitting time statistics
(for balls).  We extend these ideas to the subsequent returns to
asymptotically small sets, linking the Poisson statistics of both
processes.
\end{abstract}

\maketitle

\section{Introduction}

In this paper we demonstrate and exploit the link between Extreme Value Laws (EVL) and the laws for the Hitting Time Statistics (HTS) for discrete time non-uniformly hyperbolic dynamical systems.

The setting is a discrete time dynamical system $(\X,\mathcal
B,\mu,f)$, where $\X$ is a $d$-dimensional Riemannian manifold,
$\mathcal B$ is the
Borel $\sigma$-algebra, $f:\X\to\X$ is a measurable map
 and $\mu$ an $f$-invariant
probability measure (for all $A\in\mathcal B$ we have
$\mu(f^{-1}(A))=\mu(A)$). We consider a Riemannian metric on $\X$
that we denote by `$\dist$' and for any $\zeta\in\X$ and $\delta>0$,
we define $B_{\delta}(\zeta)=\{x\in\X: \dist(x,\zeta)<\delta\}$.
Also let $\l$ denote Lebesgue measure on $\X$ and for every
$A\in\mathcal B$ we will write $|A|:=\l(A)$. The measure $\mu$ will
be an absolutely continuous invariant probability measure (acip)
with density denoted by $\rho=\frac{d\mu}{d\l}$ . We will denote
$\R^+:=(0,\infty)$ and $\R_0^+:=[0,\infty)$.

\subsection{Extreme Value Laws}

\label{subsec:EVL intro}

In this context, by EVL we mean the study of the asymptotic
distribution of the partial maximum of observable random variables
evaluated along the orbits of the system. To be more precise, take
an observable $\varphi:\X\to\R\cup\{\pm\infty\}$
achieving a global maximum at $\zeta\in \X$ (we allow
$\varphi(\zeta)=+\infty$) and consider the stationary stochastic
process $X_0, X_1,\ldots$ given by
\begin{equation}
\label{eq:def-stat-stoch-proc} X_n=\varphi\circ f^n,\quad \mbox{for
each } n\in {\mathbb N}.
\end{equation}
Define the partial maximum
\begin{equation}
\label{eq:def-max} M_n:=\max\{X_0,\ldots,X_{n-1}\}.
\end{equation}
If $\mu$ is ergodic then Birkhoff's law of large numbers says that
$M_n\to\varphi(\zeta)$ almost surely. Similarly to central limit
laws for partial sums, we are interested in knowing if there are
normalising sequences  $\{a_n\}_{n\in\N}\subset \R^+$ and
$\{b_n\}_{n\in\N}\subset\R$ such that
\begin{equation}
\label{eq:def-EVL} \mu\left(\{x:a_n(M_n-b_n)\le
y\}\right)=\mu\left(\{x:M_n\le u_n\}\right)\to H(y),
\end{equation}
for some non-degenerate distribution function (d.f.) $H$, as
$n\to\infty$.  Here $u_n:=u_n(y)=y/a_n+b_n$ is such that
\begin{equation}
\label{eq:un}
  n\mu(X_0>u_n)\to \tau,\;\mbox{ as $n\to\infty$,}
\end{equation} for some $\tau=\tau(y)\geq0$ and in fact $H(y)=H(\tau(y))$. When this happens we
say that we have an \emph{Extreme Value Law} (EVL) for $M_n$. Note
that, clearly, we must have $u_n\to\varphi(\zeta)$, as $n\to\infty$.
We refer to an event $\{X_j>u_n\}$ as an \emph{exceedance}, at time
$j$, of level $u_n$. Classical Extreme Value Theory asserts that
there are only three types of non-degenerate asymptotic
distributions for the maximum of an independent and identically
distributed (i.i.d.) sample under linear normalisation. They will be
referred to as \emph{classical} EVLs and we denote them by:
\begin{enumerate}[Type 1:]

\item $\ev_1(y)=\e^{-\e^{-y}}$ for $y\in\R$; this is also known
as the \emph{Gumbel} extreme value distribution (e.v.d.).

\item $\ev_2(y)=\e^{-y^{-\alpha}}$, for $y>0$, $\ev_2(y)=0$,
otherwise, where $\alpha>0$ is a parameter; this family of d.f.s is known as the \emph{Fr\'echet} e.v.d.

\item $\ev_3(y)=\e^{-(-y)^{\alpha}}$, for $y\leq0$, $\ev_3(y)=1$,
otherwise, where $\alpha>0$ is a parameter; this family of d.f.s is
known as the \emph{Weibull} e.v.d.
\end{enumerate}
The same limit laws apply to stationary stochastic processes,
under certain conditions on the dependence structure, which allow
the reduction to the independent case. With this in mind, to a given
stochastic process $X_0,X_1,\ldots$ we associate an i.i.d. sequence
$Y_0,Y_1,\ldots$, whose d.f. is the same as that of $X_0$, and whose partial
maximum we define as
\begin{equation}\label{eq:def-max-iid}\hat
M_n:=\max\{Y_0,\ldots,Y_{n-1}\}.
\end{equation}
In the i.i.d. setting, the convergence in \eqref{eq:un} is equivalent to
\begin{equation}
  \label{eq:iid}
  \mu(\hat M_n\leq u_n)\to \e^{-\tau},\;\mbox{ as $n\to\infty$.}
\end{equation}
This is the content of \cite[Theorem~1.5.1]{LLR83} and depending on
the type of limit law that applies, we have that $\tau=\tau(y)$ is
of one of the following three types: $\tau_1(y)=\e^{-y}$ for $y \in
{\mathbb R}$, $\tau_2(y)=y^{-\alpha}$ for $y>0$, and
$\tau_3(y)=(-y)^{\alpha}$ for $y\leq0$.

In the dependent context, the general strategy is to prove that if $X_0,X_1,\ldots$ satisfies
some conditions, then the same limit law for $\hat M_n$ applies
to $M_n$ with the same normalising sequences $\{a_n\}_{n\in\N}$ and
$\{b_n\}_{n\in\N}$. Following \cite{LLR83} we refer to these
conditions as $D(u_n)$ and $D'(u_n)$, where $u_n$ is the sequence of
thresholds appearing in \eqref{eq:def-EVL}. Both conditions impose
some sort of independence but while $D(u_n)$ acts on
 the long range, $D'(u_n)$ is a short
range requirement.

The original condition $D(u_n)$ from \cite{LLR83}, which we will
denote by $D_1(u_n)$, is a type of uniform mixing requirement
specially adapted to Extreme Value Theory. Let
$F_{i_1,\ldots,i_n}(x_1,\ldots,x_n)$ denote the joint d.f. of
$X_{i_1},\ldots,X_{i_n}$, and set
$F_{i_1,\ldots,i_n}(u)=F_{i_1,\ldots,i_n}(u,\ldots,u)$.
\begin{condition}[$D_1(u_n)$]\label{cond:D1}We say that $D_1(u_n)$ holds
for the sequence $X_0,X_1,\ldots$ if for any integers
$i_1<\ldots<i_p$ and $j_1<\ldots<j_k$ for which $j_1-i_p>m$, and any
large $n\in\N$,
\[
\left|F_{i_1,\ldots,i_p,j_1,\ldots,j_k}(u_n)-F_{i_1,\ldots,i_p}(u_n)
F_{j_1,\ldots,j_k}(u_n)\right|\leq \gamma(n,m),
\]
where $\gamma(n,m_n)\xrightarrow[n\to\infty]{}0$, for some sequence
$m_n=o(n)$.
\end{condition}
Since usually the information concerning mixing rates of the systems
is known through decay of correlations, in \cite{FF2} we proposed a
weaker version, which we will denote by $D_2(u_n)$, which still
allows us to relate the distributions of $\hat M_n$ and $M_n$. The
advantage is that it follows immediately from sufficiently fast
decay of correlations for observables which are of bounded variation or
H\"older continuous (see \cite[Section~2]{FF2} and Lemma~\ref{lem:decay-correl-Dun}).

\begin{condition}[$D_2(u_n)$]\label{cond:D2}We say that $D_2(u_n)$ holds
for the sequence $X_0,X_1,\ldots$ if for any integers $\ell,t$
and $n$
\[ \left|\mu\left(\{X_0>u_n\}\cap
  \{\max\{X_{t},\ldots,X_{t+\ell-1}\}\leq u_n\}\right)-\mu(\{X_0>u_n\})
  \mu(\{M_{\ell}\leq u_n\})\right|\leq \gamma(n,t),
\]
where $\gamma(n,t)$ is nonincreasing in $t$ for each $n$ and
$n\gamma(n,t_n)\to0$ as $n\rightarrow\infty$ for some sequence
$t_n=o(n)$.
\end{condition}

By \eqref{eq:un}, the sequence $u_n$ is such that the average number of exceedances in
the time interval $\{0,\ldots,\lfloor n/k\rfloor\}$ is approximately $\tau/k$,
which goes to zero as $k\rightarrow\infty$. However, the exceedances
may have a tendency to be concentrated in the time period following
the first exceedance at time $0$. To avoid this we introduce:

\begin{condition}[$D'(u_n)$]\label{cond:D'un} We say that $D'(u_n)$
holds for the sequence $X_0,X_1,\ldots$ if
\begin{equation}
\label{eq:D'un} \lim_{k\rightarrow\infty}
\limsup_{n\rightarrow\infty}\,n\sum_{j=1}^{\lfloor n/k\rfloor}
\mu(\{X_0>u_n\}\cap
\{X_j>u_n\})=0.
\end{equation}
\end{condition}
This guarantees that the exceedances should appear scattered through
the time period $\{0,\ldots,n-1\}$.

The main result in \cite[Theorem 1]{FF2} states that if $D_2(u_n)$ and
$D'(u_n)$ hold for the process $X_0, X_1,\ldots$ and for a sequence of levels satisfying \eqref{eq:un},
then the following limits exist, and
\begin{equation}
  \label{eq:rel-hatMn-Mn}
  \lim_{n\to\infty}\mu(\hat M_n\leq u_n)=
  \lim_{n\to\infty}\mu(M_n\leq u_n).
\end{equation}
The above statement remains true if we replace $D_2(u_n)$ by
$D_1(u_n)$ (see \cite[Theorem~3.5.2]{LLR83}).

We assume that the observable $\varphi:\X\to\R\cup\{+\infty\}$ is of
the form
\begin{equation}
\label{eq:observable-form} \varphi(x)=g(\dist(x,\zeta)),
\end{equation} where $\zeta$ is a chosen point in the
phase space $\X$ and the function $g:[0,+\infty)\rightarrow {\mathbb
R\cup\{+\infty\}}$ is such that $0$ is a global maximum ($g(0)$ may
be $+\infty$); $g$ is a strictly decreasing bijection $g:V \to W$
in a neighbourhood $V$ of
$0$; and has one of the
following three types of behaviour:
\begin{enumerate}[Type 1:]
\item there exists some strictly positive function
$p:W\to\R$ such that for all $y\in\R$
\begin{equation}\label{eq:def-g1}\displaystyle \lim_{s\to
g_1(0)}\frac{g_1^{-1}(s+yp(s))}{g_1^{-1}(s)}=\e^{-y};
\end{equation}
\item $g_2(0)=+\infty$ and there exists $\beta>0$ such that
for all $y>0$
\begin{equation}\label{eq:def-g2}\displaystyle \lim_{s\to+\infty}
\frac{g_2^{-1}(sy)}{g_2^{-1}(s)}=y^{-\beta};\end{equation}
\item $g_3(0)=D<+\infty$ and there exists $\gamma>0$ such
that for all $y>0$
\begin{equation}\label{eq:def-g3}\lim_{s\to0}
\frac{g_3^{-1}(D-sy)}{g_3^{-1}(D-s)}= y^\gamma.
\end{equation}
\end{enumerate}

Examples of each one of the three types are as follows:
$g_1(x)=-\log x$ (in this case \eqref{eq:def-g1} is easily verified
with $p\equiv1$), $g_2(x)=x^{-1/\alpha}$ for some $\alpha>0$ (condition
\eqref{eq:def-g2} is verified with $\beta=\alpha$) and
$g_3(x)=D-x^{1/\alpha}$ for some $D\in\R$ and $\alpha> 0$ (condition
\eqref{eq:def-g3} is verified with $\gamma=\alpha$).

\begin{remark}
\label{rem:attraction-domains}
  Let the d.f. $F$ be given by
$F(u)=\mu(X_0\leq u)$ and set $u_F=\sup\{y: F(y)<1\}$. Observe that
if at time $j\in\N$ we have an exceedance of the level $u$
(sufficiently large), i.e., $X_j(x)>u$, then we have an entrance of
the orbit of $x$ into the ball $B_{g^{-1}(u)}(\zeta)$ of radius
$g^{-1}(u)$ around $\zeta$, at time $j$. This means that the
behaviour of the tail of $F$, \ie the behaviour of $1-F(u)$ as $u\to
u_F$ is determined by $g^{-1}$, if we assume that Lebesgue's
Differentiation Theorem holds for $\zeta$, since in that case
$1-F(u)\sim \rho(\zeta) |B_{g^{-1}(u)}(\zeta)|$, where
$\rho(\zeta)=\frac{d\mu}{d\l}(\zeta)$. From classical Extreme Value
Theory we know that the behaviour of the tail determines the limit
law for partial maximums of i.i.d. sequences and vice-versa. The
above conditions are just the translation in terms of the shape of
$g^{-1}$, of the sufficient and necessary conditions on the tail of
$F$ of \cite[Theorem~1.6.2]{LLR83}, in order to exist a
non-degenerate limit distribution for $\hat M_n$. In fact, if some
$\ev_i$ applies to $\hat M_n$, for some $i\in\{1,2,3\}$, then $g$
must be of type $g_i$.
\end{remark}

As can be seen from the definitions of $D_2(u_n)$ and $D'(u_n)$,
proving EVLs for absolutely continuous invariant measures for
uniformly expanding dynamical systems is straightforward.
 The study of EVLs for non-uniformly hyperbolic dynamical systems
 has been addressed in the papers \cite{Collexval} and \cite{FF}.

In \cite{Collexval}, Collet considered non-uniformly hyperbolic $C^2$ maps of the
interval which admit an acip $\mu$, with exponential decay of
correlations and obtained a Gumbel EVL for observables of type $g_1$
(actually he took $g_1(x)=-\log x$), achieving a global maximum at
$\mu$-a.e. $\zeta$ in the phase space. We remark that neither the critical points nor its orbits were included in this full $\mu$-measure set of points $\zeta$.

In \cite{FF} the quadratic maps $f_a(x)=1-ax^2$ on $I=[-1,1]$ were considered, with $a\in\B$, where $\B$ is the
Benedicks-Carleson parameter set introduced in \cite{BC85}. For each
map $f_a$ with $a\in\B$, a Weibull EVL was obtained for
observables of type $g_3$ achieving a maximum either at the critical
point or at the critical value.

\subsection{Hitting Time Statistics}
\label{subsec:HTS intro}

We next turn to Hitting Time Statistics for the dynamical system
$(\X,\mathcal B, f,\mu)$.  For a set
$A\subset \X$ we let $r_{A}(y)$ denote the \emph{first hitting time
to $A$} of the point $y$.  That is, the first time $j\ge 1$ so that
$f^j(y) \in A$.  We will be interested in the fluctuations of this functions as the set $A$ shrinks.  Firstly we consider the Return Time Statistics (RTS) of this system.  Let $\mu_A$ denote the conditional measure on $A$, \ie $\mu_A:=\frac{\mu|_A}{\mu(A)}$.  By Kac's Lemma, the expected value of $r_A$ with respect to $\mu$ is  $\int_A r_A~d\mu_A =1/\mu(A)$.  So in studying the fluctuations of $r_A$ on $A$, the relevant normalising factor is $1/\mu(A)$.
Given a sequence of sets $\{U_n\}_{n\in \N}$ so that
$\mu(U_n) \to 0$, the system has \emph{Return Time Statistics}
$G(t)$ for $\{U_n\}_{n\in \N}$ if for all $t\ge 0$ the following limit exists and equals $G(t)$:
\begin{equation}\label{eq:def-RTS-law}\lim_{n\to \infty}
\mu_{U_n}\left(r_{U_n}\geq\frac t{\mu(U_n)}\right).
\end{equation}
We say that $(\X, f,\mu)$ has \emph{Return Time Statistics $G(t)$ to balls at $\zeta$} if for any sequence $\{\delta_n\}_{n\in \N}\subset \R^+$ such that $\delta_n\to 0$ as $n\to \infty$ we have RTS $G(t)$ for $U_n=B_{\delta_n}(\zeta)$.

If we study $r_A$ defined on the whole of $\X$, \ie not simply restricted to $A$, we are studying the Hitting Time Statistics.  Note that we will use the same normalising factor $1/\mu(A)$ in this case. Analogously to the above, given a sequence of sets $\{U_n\}_{n\in \N}$ so that $\mu(U_n) \to 0$, the system has \emph{Hitting Time Statistics}
$G(t)$ for $\{U_n\}_{n\in \N}$ if for all $t\ge 0$ the following limit is
defined and equals $G(t)$:
\begin{equation}\label{eq:def-HTS-law}\lim_{n\to \infty}
\mu\left(r_{U_n}\geq\frac t{\mu(U_n)}\right).
\end{equation}

HTS to balls at a point $\zeta$ is defined analogously to RTS to balls.  In \cite{HLV}, it was shown that the limit for the HTS defined in \eqref{eq:def-HTS-law} exists if and only if the limit for the analogous RTS defined in \eqref{eq:def-RTS-law} exists.  Moreover, they show that  the HTS distribution exists and is exponential (\ie $G(t) = \e^{-t}$) if and only if the RTS distribution exists and is exponential.

For many mixing systems it is known that the HTS are exponential.
For example, this was shown for Axiom A diffeomorphisms in
\cite{Hir}, transitive Markov chains in \cite{Pit} and uniformly
expanding maps of the interval  in \cite{Collint}.  Note that in
these papers the authors were also interested in the (Poisson)
statistics of subsequent returns to some shrinking sets.   For
various results on some systems with some strong hyperbolicity
properties see also \eg \cite{Abadi, Coelho, AbGal}.

For non-uniformly hyperbolic systems less is known.  A major breakthrough in the study of HTS/RTS for non-uniformly hyperbolic maps was made in \cite{HSV}, where they gave a set of conditions which, when satisfied, imply exponential RTS to cylinders and/or balls.  Their principal application was to maps of the interval with an indifferent fixed point.  They also provided similar conditions to imply (Poisson) laws for the subsequent visits of points to shrinking sets. (See Section~\ref{sec:Poisson RTS}).

Another important paper in this direction was \cite{BSTV}, in which they showed that the RTS for a map are the same as the RTS for the first return map.  (The first return map to a set $U\subset \X$ is the map $F=f^{r_U}$.) Since it is often the case that the first return maps for non-uniformly hyperbolic dynamical systems are much better behaved (possibly hyperbolic) than the original system, this provided an extremely useful tool in this theory.  For example, they proved that if $f:I \to I$ is a unimodal map for which the critical point is nowhere dense, and for which an acip $\mu$ exists, then the relevant first return systems $(U,F,\mu_U)$ have a `Rychlik' property.  They then showed that such systems, studied in \cite{Rych}, must have exponential RTS, and hence the original system $(I,f,\mu)$ also has exponential RTS (to balls around $\mu$-a.e. point).

The presence of a recurrent critical point means that the first
return map  itself will not satisfy this Rychlik property. To
overcome this problem in \cite{BrV} special induced maps, $(U,F)$,
were used, where for $x\in U$ we have $F(x)=f^{\text{ind}(x)}(x)$
for some inducing time $\text{ind}(x)\in \N$ that is not necessarily
the first return time of $x$ to $U$.  The fact that these particular
maps can be seen as first return maps in the canonical Markov
extension, the `Hofbauer tower', meant that they were still able to
exploit the main result of \cite{BSTV} to get exponential RTS around
$\mu$-a.e. point for unimodal maps $f:I\to I$ with an acip $\mu$ as
long as $f$ satisfies a polynomial growth condition along the
critical orbit. In \cite{BTintstat} this result was improved to
include any multimodal map with an acip, irrespective of the growth
along the critical orbits, and of the speed of mixing.

We would like to remark that in the case of partially hyperbolic dynamical systems, \cite{Dol} proved exponential RTS, using techniques similar to \cite{Pit}.  In fact the theory there also covers the (Poisson) statistics of subsequent returns to shrinking sets of balls.  These statistics were also considered for toral automorphisms, using a different method, in \cite{DenGoSha}.

We note that for dynamical systems $(\X, \mathcal B, f, \mu)$ where $\mu$ is an equilibrium state, the RTS/HTS to the dynamically defined cylinders are often well understood, see for example \cite{AbGal}.  However, for non-uniformly hyperbolic dynamical systems it is not always possible to go from these strong results to the corresponding results for balls.  We would like to emphasise that in this paper we focus on the HTS to balls, rather than cylinders.

\subsection{Main Results}
\label{subsec:main results}

Our first main result, which obtains EVLs from HTS, is the following.

\begin{theorem}
\label{thm:HTS-implies-EVL} Let $(\X,\mathcal B, \mu,f)$ be a
dynamical system where $\mu$ is an acip, and consider $\zeta\in\X$
for which Lebesgue's Differentiation Theorem holds.
\begin{itemize}

\item If we have HTS to balls centred on $\zeta\in\X$,
then we have an EVL for $M_n$ which applies to the observables
\eqref{eq:observable-form} achieving a maximum at $\zeta$.

\item If we have exponential HTS ($G(t)=\e^{-t}$)
to balls at $\zeta\in\X$, then we have an EVL for $M_n$ which
coincides with that of $\hat M_n$ (meaning that
\eqref{eq:rel-hatMn-Mn} holds). In particular, this EVL must be one
of the 3 classical types. Moreover, if $g$ is of type $g_i$,  for
some $i\in\{1,2,3\}$, then we have an EVL for $M_n$ of type $\ev_i$.

\end{itemize}

\end{theorem}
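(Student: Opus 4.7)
The guiding observation is that an exceedance at time $j$ is precisely an entrance of the orbit into the ball $U_n:=B_{g^{-1}(u_n)}(\zeta)$, namely $\{X_j>u_n\}=\{f^j(x)\in U_n\}$. Consequently
\[
\{M_n\le u_n\}=\{x\notin U_n\}\cap\{r_{U_n}(x)\ge n\},
\]
so that $\bigl|\mu(M_n\le u_n)-\mu(r_{U_n}\ge n)\bigr|\le\mu(U_n)\to 0$. The whole proof is engineered around turning this simple identity into an EVL.

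First I would choose, for each $\tau\ge 0$, a sequence of thresholds $u_n$ satisfying \eqref{eq:un}. Since $g$ is a strictly decreasing bijection near $0$, the balls $U_n$ shrink to $\zeta$, and Lebesgue's Differentiation Theorem at $\zeta$ together with the classification of $g$ into types $g_1,g_2,g_3$ ensures that the tail $1-F(u)\sim\rho(\zeta)\,|B_{g^{-1}(u)}(\zeta)|$ has enough regularity to pick such a $u_n$ (this is essentially the content of Remark \ref{rem:attraction-domains}). In particular $\mu(U_n)\to 0$, so the HTS hypothesis applies to the sequence $U_n$.

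Next I would pass to the limit. Set $t_n:=n\mu(U_n)\to\tau$, so that
\[
\mu(r_{U_n}\ge n)=\mu\!\left(r_{U_n}\ge \frac{t_n}{\mu(U_n)}\right).
\]
The HTS hypothesis gives $\mu\bigl(r_{U_n}\ge t/\mu(U_n)\bigr)\to G(t)$ for every $t\ge 0$. Using monotonicity of $r_{U_n}\ge s/\mu(U_n)$ in $s$, I would sandwich the expression above between the values at $\tau-\varepsilon$ and $\tau+\varepsilon$; letting $\varepsilon\to 0$ and using continuity of $G$ at $\tau$ (valid at all but countably many $\tau$, which is harmless since we need \eqref{eq:def-EVL} to hold only at continuity points of $H$), I obtain $\mu(r_{U_n}\ge n)\to G(\tau)$. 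Combined with the first step this proves
\[
\mu(M_n\le u_n)\longrightarrow G(\tau(y))=:H(y),
\]
which is an EVL for $M_n$.

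For the second statement, if $G(t)=\e^{-t}$ then $H(y)=\e^{-\tau(y)}$, which by \eqref{eq:iid} is exactly $\lim_n\mu(\hat M_n\le u_n)$; this yields \eqref{eq:rel-hatMn-Mn}. Finally, by Remark \ref{rem:attraction-domains} the type of $g$ determines the tail of $F$ and hence forces the i.i.d.\ limit $\lim_n\mu(\hat M_n\le u_n)$ to be $\ev_i$ whenever $g$ is of type $g_i$, so the same holds for $M_n$.

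\textbf{Main obstacle.} The step with genuine content is the limit passage with $t_n\to\tau$: although conceptually trivial it must be handled with care when $G$ is only assumed to be a d.f., and it is where the continuity-point convention for EVLs has to be invoked. The only other delicate point is justifying the existence of a threshold sequence $u_n$ realising a prescribed $\tau$, which relies on the Lebesgue differentiation assumption at $\zeta$ coupled with the asymptotic shape of $g^{-1}$; everything else in the argument is an essentially algebraic manipulation of the identity relating $M_n$ and $r_{U_n}$.
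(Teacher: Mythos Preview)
Your proposal is correct and follows the same overall strategy as the paper: identify $\{M_n\le u_n\}$ with $\{r_{U_n}\ge n\}$ for $U_n=B_{g^{-1}(u_n)}(\zeta)$, arrange $n\mu(U_n)\to\tau$ via Lebesgue differentiation, and conclude $\mu(M_n\le u_n)\to G(\tau)$.

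The one substantive technical difference is your handling of the limit passage $t_n\to\tau$. You use monotonicity plus a sandwich argument, which forces you to invoke continuity of $G$ at $\tau$ and then argue that discontinuities are harmless for the EVL convention. The paper instead bounds the difference directly by stationarity: since $\{r_{U_n}\ge m\}\setminus\{r_{U_n}\ge m+k\}\subset\bigcup_{j=m}^{m+k-1}f^{-j}(U_n)$, one gets
\[
\bigl|\mu(r_{U_n}\ge n)-\mu\bigl(r_{U_n}\ge \tau/\mu(U_n)\bigr)\bigr|\le |\varepsilon_n|\,n\,\mu(U_n)\sim|\varepsilon_n|\tau\to 0,
\]
where $(1+\varepsilon_n)n=\tau/\mu(U_n)$. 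This yields the conclusion for \emph{every} $\tau\ge 0$ without any continuity assumption on $G$, which is slightly cleaner. The paper also writes down the normalising sequences $u_n$ explicitly for each type $g_i$ and verifies $g_i^{-1}(u_n)\sim(\tau_i(y)/(\kappa\rho(\zeta)n))^{1/d}$ by direct computation from \eqref{eq:def-g1}--\eqref{eq:def-g3}, whereas you defer this to Remark~\ref{rem:attraction-domains}; both are fine.
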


We next define a class of multimodal interval maps $f:I \to I$. We
denote the finite set of critical points by $\crit$. We say that
$c\in \crit$ is \emph{non-flat} if there exists a diffeomorphism
$\psi_c:\R \to \R$ with $\psi_c(0)=0$ and $1<\ell_c<\infty$ \st for
$x$ close to $c$, $f(x)=f(c)\pm|\psi_c(x-c)|^{\ell_c}$.  The value
of $\ell_c$ is known as the \emph{critical order} of $c$. Let
\[
N\!F^k := \left\{ f:I \to I : f \mbox{ is } C^k, \mbox{ each $c \in
\crit$ is non-flat and } \inf_{f^n(p) = p} |Df^n(p)| > 1 \right\}.
\]

The following is a simple corollary of
Theorem~\ref{thm:HTS-implies-EVL} and \cite[Theorem
3]{BTintstat}.  It generalises the result of Collet in \cite{Collexval}
from unimodal maps with exponential growth on the critical point to
multimodal maps where we only need to know that there is an acip.

\begin{corollary}
Suppose that $f\in N\!F^2$ and $f$ has an acip $\mu$.  Then
$(I,f,\mu)$ has an EVL for $M_n$ which coincides with that of
$\hat M_n$, and this holds for $\mu$-a.e. $\zeta\in\X$ fixed at the
choice of the observable in \eqref{eq:observable-form}. Moreover,
the EVL is of type $\ev_i$ when the observables are of type $g_i$,
for each $i\in\{1,2,3\}$. \label{cor:acip EVL}
\end{corollary}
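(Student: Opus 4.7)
The plan is that this corollary is essentially a concatenation of two black-box results, so the proof is a matter of verifying that the hypotheses of Theorem~\ref{thm:HTS-implies-EVL} are met at $\mu$-a.e. $\zeta$ and then invoking its second bullet.

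First, I would apply \cite[Theorem~3]{BTintstat}, which is the key dynamical input: any multimodal map $f\in N\!F^2$ possessing an acip $\mu$ has exponential HTS to balls (i.e.\ $G(t)=\e^{-t}$) at $\mu$-a.e.\ point $\zeta\in I$. Call this full-measure set $\Omega_1$. Note that this is exactly where the restriction $N\!F^2$ plus existence of an acip is used; no extra hypotheses on the speed of mixing or on the growth along the critical orbits are needed, as \cite{BTintstat} handles those issues internally by passing to an induced system à la \cite{BSTV,BrV} on the Hofbauer tower.

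Second, I would verify the remaining hypothesis of Theorem~\ref{thm:HTS-implies-EVL}, namely that Lebesgue's Differentiation Theorem holds at $\zeta$. Because $\mu$ is absolutely continuous with density $\rho\in L^1(\l)$, the Lebesgue Differentiation Theorem applies at $\l$-a.e. point, and since $\mu\ll\l$, it applies at $\mu$-a.e. point as well. Let $\Omega_2$ denote this full $\mu$-measure set. Then $\Omega:=\Omega_1\cap\Omega_2$ has $\mu(\Omega)=1$, and at any $\zeta\in\Omega$ both hypotheses of Theorem~\ref{thm:HTS-implies-EVL} are satisfied.

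Finally, fix $\zeta\in\Omega$ and take any observable $\varphi$ of the form \eqref{eq:observable-form} achieving its maximum at $\zeta$, with $g$ of type $g_i$ for some $i\in\{1,2,3\}$. By the second bullet of Theorem~\ref{thm:HTS-implies-EVL}, exponential HTS at $\zeta$ gives an EVL for $M_n$ which coincides with that of the i.i.d. process $\hat M_n$ (i.e.\ \eqref{eq:rel-hatMn-Mn} holds), and this EVL is of type $\ev_i$. This is exactly the statement of the corollary. There is no real obstacle here: the whole content is absorbed into the two cited theorems, and the only thing one has to be slightly careful about is ensuring that the full-measure sets coming from the HTS result and from Lebesgue differentiability intersect in a full-measure set, which is automatic.
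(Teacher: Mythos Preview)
Your proposal is correct and matches the paper's approach exactly: the paper simply states that this is a ``simple corollary of Theorem~\ref{thm:HTS-implies-EVL} and \cite[Theorem~3]{BTintstat}'' without giving a separate proof, and your write-up spells out precisely that concatenation, including the routine check that Lebesgue differentiability holds $\mu$-a.e.\ because $\mu\ll\l$.
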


Now, we state a result in the other direction, \ie we show how to  get HTS from EVLs.

\begin{theorem}
\label{thm:EVL=>HTS} Let $(\X,\mathcal B, \mu,f)$ be a dynamical
system where $\mu$ is an acip and consider $\zeta\in\X$ for which
Lebesgue's Differentiation Theorem holds.
\begin{itemize}

\item If we have an EVL for $M_n$ which applies to the observables
\eqref{eq:observable-form} achieving a maximum at $\zeta\in\X$ then
we have HTS to balls at $\zeta$.

\item If we have an EVL for $M_n$ which coincides with that of $\hat M_n$, then
we have exponential HTS ($G(t)=\e^{-t}$) to balls at $\zeta$.
\end{itemize}
\end{theorem}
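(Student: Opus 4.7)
The plan is to match the hitting-time event to a non-exceedance event and then let the EVL do the work. Fix $t>0$ and a sequence $\delta_n\to 0^+$, and set $U_n := B_{\delta_n}(\zeta)$ and $k_n := \lfloor t/\mu(U_n)\rfloor$, so $k_n\to\infty$. Since $g$ is a strictly decreasing bijection near $0$, for all large $n$ one has $\{X_j\le g(\delta_n)\}=\{f^j\notin U_n\}$, hence
$$\{r_{U_n}>k_n\}=\bigcap_{j=1}^{k_n}f^{-j}(U_n^c),\qquad \{M_{k_n}\le g(\delta_n)\}=\bigcap_{j=0}^{k_n-1}f^{-j}(U_n^c).$$
The symmetric difference is contained in $U_n\cup f^{-k_n}(U_n)$, whose $\mu$-measure is at most $2\mu(U_n)\to 0$ by $f$-invariance. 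So my first step would be to record
$$\bigl|\mu(r_{U_n}>k_n)-\mu(M_{k_n}\le g(\delta_n))\bigr|\longrightarrow 0,$$
noting that the same $o(1)$ bound holds with $r_{U_n}\ge t/\mu(U_n)$ in place of $r_{U_n}>k_n$, since changing $k_n$ by $\pm 1$ alters the probability by at most $\mu(U_n)$.

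Next I would verify the associated tail condition. Lebesgue's Differentiation Theorem at $\zeta$ gives $\mu(X_0>g(\delta_n))=\mu(U_n)$, and so $k_n\mu(X_0>g(\delta_n))=k_n\mu(U_n)\to t$. At this point I would invoke the EVL hypothesis, upgraded by a standard bracketing argument from the canonical normalisation $u_n(y)=y/a_n+b_n$ to an arbitrary threshold sequence $w_m$ with $m\mu(X_0>w_m)\to\tau^*$: sandwich $w_m$ between $u_m(y_1)$ and $u_m(y_2)$ where $\tau(y_i)$ is close to $\tau^*$, exploiting the monotonicity of the tail $1-F$ and the continuity of the limit $H$. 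This yields $\mu(M_{k_n}\le g(\delta_n))\to H^*(t)$ for a function $H^*$ depending only on the EVL type. Combined with the symmetric-difference bound, this gives HTS at $\zeta$ with $G(t)=H^*(t)$, proving the first bullet.

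For the second bullet, if the EVL for $M_n$ coincides with that of $\hat M_n$, then by \eqref{eq:iid} we are forced to have $H^*(\tau)=\e^{-\tau}$, so $G(t)=\e^{-t}$ and the HTS at $\zeta$ is exponential. The main obstacle I expect is the bracketing step upgrading the EVL from the privileged normalised family $u_n(y)$ to an arbitrary threshold sequence; the rest is just the elementary symmetric-difference identification of HTS with non-exceedance events, combined with the Kac-type bookkeeping $k_n\mu(U_n)\to t$ afforded by Lebesgue differentiation.
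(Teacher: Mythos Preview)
Your argument is correct, but it takes a different route from the paper's. The paper fixes $y$ with $\tau(y)=t$, defines $\ell_n=\lfloor t/(\kappa\rho(\zeta)\delta_n^d)\rfloor$, and applies the EVL \emph{directly} along the canonical thresholds $u_{\ell_n}=u_{\ell_n}(y)$; the remaining work is to show that the ball $B_{g^{-1}(u_{\ell_n})}(\zeta)$ is $\mu$-close to $B_{\delta_n}(\zeta)$ and that $\ell_n\sim t/\mu(B_{\delta_n}(\zeta))$, both of which use Lebesgue's Differentiation Theorem to pass between $\mu$-measure and radius. You instead take the threshold $g(\delta_n)$, so that the relevant ball is exactly $B_{\delta_n}(\zeta)$ and no ball-comparison is needed; the price is the bracketing step extending the EVL from the family $u_n(y)$ to an arbitrary sequence $w_m$ with $m\mu(X_0>w_m)\to t$. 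That bracketing is standard and valid here because the $\tau_i$ are continuous surjections onto $(0,\infty)$ and the limit $H$ is continuous; note, however, that the paper's route sidesteps this continuity requirement on $H$ entirely by reading off the limit at a single $y$. Conversely, your route does not actually use Lebesgue differentiation at all: the identity $\mu(X_0>g(\delta_n))=\mu(U_n)$ is exact from the form of the observable, and $k_n\mu(U_n)\to t$ is immediate from the definition of $k_n$ once $\mu(U_n)\to 0$. So your attribution of the ``Kac-type bookkeeping'' to Lebesgue differentiation is misplaced, and in fact your argument shows the hypothesis is superfluous in this direction. Finally, your explicit symmetric-difference bound for the $j=0$ versus $j=k_n$ index shift is cleaner than the paper's treatment, which writes $\{M_n\le u_n\}=\{r_{B_{g^{-1}(u_n)}}\ge n\}$ without comment even though the two sets differ on $B_{g^{-1}(u_n)}(\zeta)$.
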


The following is immediate by the above and \cite[Theorem 1]{FF2}
(see \eqref{eq:rel-hatMn-Mn}).
\begin{corollary}
  Let $(\X,\mathcal B, \mu,f)$ be a dynamical system where $\mu$ is an
acip and consider $\zeta\in\X$ for which Lebesgue's Differentiation
Theorem holds. If $D_2(u_n)$ (or $D_1(u_n)$) and $D'(u_n)$ hold for
a stochastic process $X_0,X_1,\ldots$ defined by
\eqref{eq:def-stat-stoch-proc} and \eqref{eq:observable-form}, where
$u_n$ is a sequence of levels satisfying \eqref{eq:un}, then we have
exponential HTS to balls at $\zeta$. \label{cor:D+D'=>HTS}
\end{corollary}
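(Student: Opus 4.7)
The plan is simply to chain together two results already proved: \cite[Theorem~1]{FF2} and the second bullet of Theorem~\ref{thm:EVL=>HTS} stated above. The hypotheses of the corollary are tailored so that the former produces an EVL for $M_n$ that coincides with the one for the i.i.d. companion $\hat M_n$, and then the latter converts that EVL into exponential HTS to balls at $\zeta$.

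First, I would use the i.i.d.\ theory: since $u_n$ satisfies \eqref{eq:un}, the equivalence between \eqref{eq:un} and \eqref{eq:iid} (\cite[Theorem~1.5.1]{LLR83}) gives $\mu(\hat M_n\le u_n)\to \e^{-\tau}$. Next, invoking \cite[Theorem~1]{FF2} — which asserts that $D_2(u_n)$ (or, alternatively, $D_1(u_n)$ by \cite[Theorem~3.5.2]{LLR83}) together with $D'(u_n)$ force \eqref{eq:rel-hatMn-Mn} to hold — I conclude that the limit of $\mu(M_n\le u_n)$ exists and equals the i.i.d.\ limit $\e^{-\tau}$. Since the process $X_0,X_1,\ldots$ is built from an observable of the form \eqref{eq:observable-form}, this is an EVL for $M_n$ in the sense of \eqref{eq:def-EVL}, coinciding with the one for $\hat M_n$.

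With the EVL in hand, the hypotheses of the second bullet of Theorem~\ref{thm:EVL=>HTS} are satisfied verbatim: $\mu$ is acip, $\zeta$ admits Lebesgue's Differentiation Theorem, the observable is of the form \eqref{eq:observable-form} with maximum at $\zeta$, and the EVL matches that of the associated i.i.d.\ maximum. Applying that theorem delivers exponential HTS, i.e.\ $G(t)=\e^{-t}$, to balls centred at $\zeta$, which is exactly the conclusion of the corollary.

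There is no genuine obstacle here; the work is purely a bookkeeping concatenation. The one point worth flagging is that Theorem~\ref{thm:EVL=>HTS} needs an EVL holding along a family of thresholds $u_n = u_n(y)$ parametrised by $y$ (equivalently by $\tau$), so one should read the hypothesis ``$D_2(u_n)$ and $D'(u_n)$ hold'' as holding for the relevant family of sequences coming from \eqref{eq:un}; this is the standard convention in \cite{FF2, LLR83} and requires no extra argument.
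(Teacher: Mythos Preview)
Your proposal is correct and matches the paper's approach exactly: the paper declares the corollary ``immediate by the above and \cite[Theorem~1]{FF2} (see \eqref{eq:rel-hatMn-Mn})'', which is precisely the concatenation of \cite[Theorem~1]{FF2} (or \cite[Theorem~3.5.2]{LLR83} for $D_1$) with the second bullet of Theorem~\ref{thm:EVL=>HTS} that you describe. Your remark about needing the EVL along the full family $u_n(y)$ is a fair clarification but, as you note, is standard and not an obstacle.
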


The following is an immediate corollary of Theorem~\ref{thm:EVL=>HTS} and the
main theorem of \cite{FF}.

\begin{corollary}
For every Benedicks-Carleson quadratic map $f_a$ (with $a\in\B$) we
have exponential HTS to balls around the critical point or the
critical value. \label{cor:BC}
\end{corollary}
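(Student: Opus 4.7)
The plan is to chain together the main theorem of \cite{FF} with the second bullet of Theorem~\ref{thm:EVL=>HTS}; the corollary is advertised as immediate and indeed the argument is essentially bookkeeping, once the two quoted ingredients are aligned.

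First, I would recall, as mentioned in Section~\ref{subsec:EVL intro}, that for each Benedicks-Carleson parameter $a\in\B$ the main theorem of \cite{FF} asserts a Weibull EVL for the process $X_n=\varphi\circ f_a^n$, where $\varphi$ is of the form \eqref{eq:observable-form} with $g=g_3$ and $\zeta$ equal either to the critical point or to the critical value. By Remark~\ref{rem:attraction-domains}, $\ev_3$ is exactly the law obtained for the i.i.d.\ maximum $\hat M_n$ of a sample with the same marginal d.f. as $X_0$ when $g$ is of type $g_3$, so the EVL for $M_n$ \emph{coincides} with that of $\hat M_n$ in the sense of \eqref{eq:rel-hatMn-Mn}. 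This is precisely the hypothesis needed in the second bullet of Theorem~\ref{thm:EVL=>HTS}.

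Second, to apply Theorem~\ref{thm:EVL=>HTS} one must verify Lebesgue's Differentiation Theorem at the chosen $\zeta$. The Benedicks-Carleson acip has a density $\rho\in L^1(I)$ which, by well-known bounds, is bounded in a neighbourhood of the critical point and has only an integrable $|x-f_a(0)|^{-1/2}$-type singularity at the critical value; in both cases the Lebesgue density of $\mu$ at $\zeta$ is well defined along the scale $\delta_n=g_3^{-1}(u_n)$ dictated by the observable, so LDT holds at $\zeta$ in the form we use it here.

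With both hypotheses of Theorem~\ref{thm:EVL=>HTS} verified, its second bullet yields HTS $G(t)=\e^{-t}$ to $B_{\delta_n}(\zeta)$ for every sequence $\delta_n\to 0$, which is the claim. The only step that requires genuine attention, rather than quotation, is the verification of LDT at the critical value, because of the density singularity; everywhere else the statement is a direct composition of the two cited results.
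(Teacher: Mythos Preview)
Your proposal follows exactly the paper's route: the corollary is stated there as immediate from Theorem~\ref{thm:EVL=>HTS} and the main theorem of \cite{FF}, with no further argument given. You actually go a step beyond the paper by flagging the LDT hypothesis at the critical value (where $\rho$ has an inverse-square-root singularity); your handling of that point is somewhat hand-wavy---the proof of Theorem~\ref{thm:EVL=>HTS} as written uses a finite $\rho(\zeta)$ explicitly in defining $\ell_n$, so strictly speaking a small adaptation (e.g.\ defining $\ell_n$ directly through $\mu(B_{\delta_n}(\zeta))$ and using the regular variation of $\delta\mapsto\mu(B_\delta(\zeta))$) is needed---but the paper itself does not address this subtlety at all.
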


The next result is a byproduct of Theorems~\ref{thm:HTS-implies-EVL},
\ref{thm:EVL=>HTS} and the fact that under $D_1(u_n)$ the only
possible limit laws for partial maximums are the classical
$\ev_i$ for $i\in\{1,2,3\}$.  Since this is not as immediate as the other corollaries, we include a short proof in Section~\ref{sec:proofs HTS EVL}.

\begin{corollary}
Let $(\X,\mathcal B, \mu,f)$ be a dynamical system $\mu$ is an acip
and consider $\zeta\in\X$ for which Lebesgue's Differentiation
Theorem holds. If $D_1(u_n)$ holds for a stochastic process
$X_0,X_1,\ldots$ defined by \eqref{eq:def-stat-stoch-proc} and
\eqref{eq:observable-form}, where $u_n$ is a sequence of levels
satisfying \eqref{eq:un}, then the only possible HTS to balls around
$\zeta$ are of exponential type, meaning that, there is $\theta>0$
such that $G(t)=\e^{-\theta t}$. \label{cor:only HTS in town}
\end{corollary}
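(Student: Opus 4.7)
The plan is to chain the HTS--EVL equivalence (Theorems~\ref{thm:HTS-implies-EVL} and \ref{thm:EVL=>HTS}) with Leadbetter's classical extremal-index theorem: under $D_1(u_n)$, any subsequential limit of $\mu(M_n\leq u_n)$ along levels satisfying \eqref{eq:un} must take the form $e^{-\theta\tau}$ for a single constant $\theta\in[0,1]$, independent of $\tau$.

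Suppose an HTS $G(t)$ to balls at $\zeta$ exists. For each fixed $t>0$, pick a sequence of levels $u_n=u_n(t)$ with $n\mu(X_0>u_n(t))\to t$. This is possible because, by absolute continuity of $\mu$ and Lebesgue's Differentiation Theorem at $\zeta$, the tail $1-F(u)$ is continuous and strictly decreasing near $u_F$ (see Remark~\ref{rem:attraction-domains}); the corresponding balls $B_{\delta_n(t)}(\zeta)$ have $\mu$-mass asymptotic to $t/n$. Theorem~\ref{thm:HTS-implies-EVL} then delivers the EVL
\[
\mu(M_n\leq u_n(t))\to G(t).
\]
Applying Leadbetter's theorem from \cite{LLR83} to the process $X_0,X_1,\ldots$ yields a single constant $\theta\in[0,1]$, independent of $t$, such that $\lim_n\mu(M_n\leq u_n(t))=e^{-\theta t}$ for every $t>0$. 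Comparing with the display gives $G(t)=e^{-\theta t}$.

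The one loose end is verifying $\theta>0$: this follows from the nontriviality of the weak limit $G$, since $\theta=0$ would force $G\equiv 1$, which contradicts the existence of some $t$ with $G(t)\in (0,1)$. I expect the main obstacle to be merely bookkeeping, namely ensuring that $D_1(u_n)$ as stated in the hypothesis applies uniformly along the family of threshold sequences $\{u_n(t)\}_t$ needed to invoke the classical extremal-index theorem; but this is precisely how condition $D_1$ is formulated in Condition~\ref{cond:D1}, being a statement about any sequence $u_n$ satisfying \eqref{eq:un} rather than about one fixed $\tau$.
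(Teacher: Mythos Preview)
Your proposal is correct and follows essentially the same route as the paper: assume an HTS $G$ exists, invoke Theorem~\ref{thm:HTS-implies-EVL} to produce an EVL, then apply \cite[Theorem~3.7.1]{LLR83} under $D_1(u_n)$ to force that EVL to be $e^{-\theta\tau}$, and read off $G(t)=e^{-\theta t}$. The only cosmetic difference is that the paper closes the loop by citing Theorem~\ref{thm:EVL=>HTS}, whereas you short-circuit this by identifying the EVL limit directly with $G$ via the proof of Theorem~\ref{thm:HTS-implies-EVL}; your explicit justification of $\theta>0$ from non-degeneracy of $G$ is a welcome addition.
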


Note that for this corollary to be non-trivial, we must assume that there exists a distribution for HTS.  This may not always be the case. For example, in \cite{CoedeF, Coe} it was shown that for certain circle diffeomorphisms there are sequences of intervals $\{U_{n}\}_{n\in \N}, \ \{V_{n}\}_{n\in \N}$ which both shrink to the same point $\zeta$, but yield different HTS laws.  Note that in these cases $D_1(u_n)$ also fails.

As we have already mentioned, Corollary~\ref{cor:acip EVL}
generalises the result of Collet in \cite{Collexval}, which was for
$C^2$ non-uniformly hyperbolic maps  of the interval (admitting a
Young tower). However, a close look to Collet's arguments allows us
to conclude that his result still prevails in higher dimensions. In
fact, one can show that if we consider non-uniformly expanding maps
(in any finite dimensional compact manifold), admitting a so-called
Young tower with exponential return times to the base, then for any
sequence of r.v. $X_0, X_1,\ldots$, defined as in
\eqref{eq:def-stat-stoch-proc} and for a sequence of levels $u_n$
such that $n\mu(X_0>u_n)\to\tau>0$, conditions $D_2(u_n)$ and
$D'(u_n)$ hold. This means that by the above theorems, we can prove
both EVLs and HTS for these maps. Due to numerous definitions
required for that setting, we leave both the theorems and the proofs
on this subject to Section~\ref{sec:EVL-HTS-higher-dim}.

Theorems~\ref{thm:HTS-implies-EVL} and \ref{thm:EVL=>HTS} give us
new tools to investigate the recurrence of dynamical systems,
principally by allowing us to use the wealth of theory for HTS which
has been developed in recent years to prove EVLs.  We note that in
Corollary~\ref{cor:acip EVL}, the dynamical systems involved need
not have any fast rate of decay of correlations at all.  Indeed, a
priori the relevant system may only have summable decay of
correlations. As in Section~\ref{sec:EVL-HTS-higher-dim} where we
consider higher dimensional maps admitting Young towers, there are
situations where it is actually easier to check conditions like
$D_2(u_n)$ and $D'(u_n)$ in order to get laws for HTS. In fact, to
our knowledge, exponential HTS to balls have never been proved
before for higher dimensional non-uniformly expanding systems: in
such cases, inducing schemes with the nice properties of
one-dimensional dynamics are much harder to find. Also the dynamical
systems we present in this paper should provide models which can be
used in investigating Extreme Value Theory both analytically and
numerically. Namely, the simple fact that we get EVLs from
deterministic models may be an extra advantage for numerical
simulation since there is no need to generate random numbers. This
means that this theory may reveal very useful for testing GEV
(Generalised Extreme Value distribution) fitting for data
corresponding to phenomena for which there is an underlying
deterministic model.

The next question that arises is: what about subsequent visits to
$U_n$ or subsequent exceedances of the level $u_n$? Namely, we are
interested in the \emph{point processes} associated to the instants
of occurrence of returns to $U_n$ and exceedances of the level
$u_n$. If we have either exponential HTS or a classical EVL then
time between hits or exceedances is exponentially distributed. This
means that we should expect a Poisson limit for the point processes.
We show in Section~\ref{sec:link-HTPP-EPP} that the relation between
HTS and EVL does indeed extend to the laws for the subsequent
visits/exceedances (we postpone the precise definitions and results
to Sections~\ref{sec:link-HTPP-EPP}, \ref{sec:PoissonEVL} and
\ref{sec:Poisson RTS}).  More precisely, we show that the point
process of hitting times has a Poisson limit if and only if the
point process of exceedances has a Poisson limit. We next discuss
how to obtain a Poisson law in these two different contexts.  In
Section~\ref{sec:PoissonEVL} we give conditions which guarantee a
Poisson limit for the point process of exceedance times. This part
of the paper can be seen as  a generalisation of \cite{FF2}.
Moreover, we show that these conditions can be verified in the
settings from \cite{Collexval,FF}, leading to Poisson statistics for
both point processes for the systems considered.  In
Section~\ref{sec:Poisson RTS} we show that in many cases for
multimodal maps it can be shown that the HTS behave asymptotically
as a Poisson distribution.

Throughout this paper the notation $A_n\sim B_n$ means that
$\lim_{n\to \infty} \frac{A_n}{B_n}=1$. Also, if
$\{\delta_n\}_{n\in\N}\subset \R^+$ has $\delta_n\to 0$ as
$n\to\infty$, then for each $\zeta\in\X$, let $\kappa\in (0,\infty)$
be such that $|B_{\delta_n}(\zeta)|\sim \kappa \cdot\delta_n^d$. Let
$x\in {\mathbb R}$. We denote the integer part of $x$ by $\lfloor
x\rfloor$ and define $\lceil x\rceil:=x$ if $x=\lfloor x\rfloor$,
and $\lceil x\rceil:=\lfloor x\rfloor+1$ otherwise.

\subsection*{Acknowledgements}
We would like to thank J.F.\ Alves for useful suggestions regarding
the example of a non-uniformly expanding system given in
Section~\ref{subsec:example}.

\section{Proofs of our results on HTS and EVL}
\label{sec:proofs HTS EVL}

In this section we prove Theorems~\ref{thm:HTS-implies-EVL},
\ref{thm:EVL=>HTS} and Corollary~\ref{cor:only HTS in town}.

\begin{proof}[Proof of Theorem~\ref{thm:HTS-implies-EVL}]
Let $\rho(\zeta)=\frac{d\mu}{d\l}(\zeta)\in\R_0^+$ and set
\begin{align*}
u_n&=g_1\left((\kappa\rho(\zeta)n)^{-1/d}\right)
+p\left(g_1\left(\left({\kappa\rho(\zeta)n}\right)^{-1/d}\right)
\right)\frac{y}{d},&&\mbox{for $y\in {\mathbb R}$, for type
$g_1$;}\\
 u_n&=g_2\left(\left({\kappa\rho(\zeta)n}\right)^{-1/d}\right)y,&&\mbox{for $y>0$, for type
$g_2$;}\\
u_n&=D-\left(D-g_3\left(\left({\kappa\rho(\zeta)n}\right)^{-1/d}\right)\right)(-y),
&&\mbox{for $y<0$, for type $g_3$.}
\end{align*}

Note that, for $n$ sufficiently large
\begin{eqnarray}
&&\{x:M_n(x)\leq u_n\}=\bigcap_{j=0}^{n-1}\{x:X_j(x)\leq u_n\}
=\bigcap_{j=0}^{n-1}\{x:g(\dist(f^j(x),\zeta))\leq u_n\}\nonumber\\
&&=\bigcap_{j=0}^{n-1}\{x:\dist(f^j(x),\zeta)\geq g^{-1}(u_n)\}=
\{x:r_{B_{g^{-1}(u_n)}(\zeta)}(x)\geq n\} \label{eq:rel-max-returns}
\end{eqnarray}

Now, observe that \eqref{eq:def-g1}, \eqref{eq:def-g2} and \eqref{eq:def-g3} imply
\begin{align*}
g_1^{-1}(u_n)&=g_1^{-1}\left[g_1\left(\left({\kappa\rho(\zeta)n}
\right)^{-1/d}\right)
+p\left(g_1\left(\left({\kappa\rho(\zeta)n}\right)^{-1/d}\right)
\right)\frac{y}{d}\right]\\&\sim g_1^{-1}
\left[g_1\left(\left({\kappa\rho(\zeta)n}
\right)^{-1/d}\right)\right]\e^{-y/d}
=\left(\frac{\e^{-y}}{\kappa\rho(\zeta)n}\right)^{1/d};\\
g_2^{-1}(u_n)&=g_2^{-1}
\left[g_2\left(\left({\kappa\rho(\zeta)n}\right)^{-1/d}
\right)y\right]\sim g_2^{-1}
\left[g_2\left(\left({\kappa\rho(\zeta)n}\right)^{-1/d}\right)
\right]y^{-\beta}
=\left(\frac{y^{-\beta d}}{\kappa\rho(\zeta)n}\right)^{1/d};\\
g_3^{-1}(u_n)&=g_3^{-1}
\left[D-\left(D-g_3\left(\left({\kappa\rho(\zeta)n}\right)^{-1/d}\right)
\right)(-y)\right]
\\&\sim g_3^{-1}\left[
D-\left(D-g_3\left(\left({\kappa\rho(\zeta)n}\right)^{-1/d}
\right)\right)\right] (-y)^{\gamma}=\left(\frac{(-y)^{\gamma
d}}{\kappa\rho(\zeta)n}\right)^{1/d}.
\end{align*}
Thus, we may write
$$g^{-1}(u_n)\sim\left(\frac{\tau(y)}{\kappa\rho(\zeta)n}\right)^{1/d},$$
meaning that
$$g_i^{-1}(u_n)\sim\left(\frac{\tau_i(y)}{\kappa\rho(\zeta)n}\right)^{1/d},
 \;\; \forall i\in \{1,2,3\}$$
where $\tau_1(y)=\e^{-y}$ for $y \in {\mathbb R}$, $\tau_2(y)=
y^{-\beta d}$ for $y>0$, and $\tau_3(y)=(-y)^{\gamma d}$ for $y<0$.

Since Lebesgue's Differentiation Theorem holds for
$\zeta\in \X$, we have
$\frac{\mu(B_\delta(\zeta))}{|B_\delta(\zeta)|} \to \rho(\zeta)$ as
$\delta\to 0$. Consequently, since it is obvious that
$g^{-1}(u_n)\to 0$ as $n\to\infty$, then
\[
\mu\left(B_{g^{-1}(u_n)}(\zeta)\right)\sim
\rho(\zeta)|B_{g^{-1}(u_n)}(\zeta)|\sim
\rho(\zeta)\kappa(g^{-1}(u_n))^d=\rho(\zeta)\kappa
\frac{\tau(y)}{\kappa\rho(\zeta)n}=\frac{\tau(y)}{n}.
\]

Thus, we have
\begin{equation}
\label{eq:n-estimate}n\sim
\frac{\tau(y)}{\mu\left(B_{g^{-1}(u_n)}(\zeta)\right)}.
\end{equation}

 Now, we claim that using \eqref{eq:rel-max-returns} and
\eqref{eq:n-estimate}, we have
\begin{align}
\label{eq:approximation-1} \lim_{n\to\infty}\mu(\{x:M_n(x)\leq
u_n\})&=\lim_{n\to\infty}
\mu\left(\left\{x:r_{B_{g^{-1}(u_n)}(\zeta)}(x)\geq
\frac{\tau(y)}{\mu\left(B_{g^{-1}(u_n)}(\zeta)\right)}\right\}\right)\\
&= G(\tau(y))\label{eq:conclusion-1},
\end{align}
which gives the first part of the theorem.

To see that \eqref{eq:approximation-1} holds, observe that by
\eqref{eq:rel-max-returns} and \eqref{eq:n-estimate} we have
\begin{multline*}
 \left|\mu(\{M_n\leq
u_n\})-\mu\left(\left\{r_{B_{g^{-1}(u_n)}(\zeta)}\geq
\frac{\tau(y)}{\mu\left(B_{g^{-1}(u_n)}(\zeta)\right)}\right\}\right)\right|
\\ =
\left| \mu\left(\left\{r_{B_{g^{-1}(u_n)}(\zeta)}\geq
n\right\}\right)-\mu\left(\left\{r_{B_{g^{-1}(u_n)}(\zeta)}\geq
(1+\varepsilon_n)n\right\}\right)\right|,
\end{multline*}
where $\{\varepsilon_n\}_{n\in\N}$ is such that $\varepsilon_n\to0$
as $n\to\infty$. Since we have
\begin{equation}
\label{eq:diff-r-m-k}\left\{r_{B_{g^{-1}(u_n)}(\zeta)}\geq
m\right\}\setminus\left\{r_{B_{g^{-1}(u_n)}(\zeta)}\geq
m+k\right\}\subset \bigcup_{j=m}^{m+k-1} f^{-j}\left(
B_{g^{-1}(u_n)}(\zeta)\right),\;\mbox{$\forall
m,k\in\N$,}\end{equation} it follows by stationarity that
\begin{multline*}
 \left|
\mu\left(\left\{r_{B_{g^{-1}(u_n)}(\zeta)}\geq
n\right\}\right)-\mu\left(\left\{r_{B_{g^{-1}(u_n)}(\zeta)}\geq
(1+\varepsilon_n)n\right\}\right)\right|\\ \leq |\varepsilon_n|n\mu\left(
B_{g^{-1}(u_n)}(\zeta)\right)\sim |\varepsilon_n|\tau \to 0,
\end{multline*}
as $n\to\infty$, completing the proof of \eqref{eq:approximation-1}.

Next we will use the exponential HTS hypothesis, that is
$G(t)=\e^{-t}$, to show the second part of the theorem.

Under the exponential HTS assumption, by \eqref{eq:conclusion-1}
it follows immediately that $\lim_{n\to\infty}\mu(\{x:M_n(x)\leq
u_n\})=\e^{-\tau(y)}$. Now, recalling that in the i.i.d setting
\eqref{eq:un} is equivalent to \eqref{eq:iid} then we also have
$\lim_{n\to\infty}\mu(\{x:\hat M_n(x)\leq u_n\})=\e^{-\tau(y)}$.
As explained in the introduction, this means that $G(\tau)$ must be of the three classical types.

It remains to show that if the observable is of type $g_i$ then the
EVL that applies to $M_n$ is of type $\ev_i$, for each
$i\in\{1,2,3\}$.

\textbf{Type $g_1$:} In this case we have
$\e^{-\tau_1(y)}=\e^{-\e^{-y}}$, for all $y\in {\mathbb R}$, that
corresponds to the Gumbel e.v.d. and so we have an EVL for $M_n$ of
type $\ev_1$.

\textbf{Type $g_2$:} We obtain $\e^{-\tau_2(y)}=\e^{-y^{-\beta d}}$
for $y>0$.  To conclude that in this case we have the Fr\'{e}chet
e.v.d. with parameter $\beta d$, we only have to check that for
$y\leq0$, $\mu(\{x:M_n(x)\leq u_n\})=0$. Since
$g_2\left(\left({\kappa\rho(\zeta)n}\right)^{-1/d} \right)>0$ (for
all large $n$) and $$\mu(\{x:M_n(x)\leq
u_n\})=\mu\left(\left\{x:M_n(x)\leq
g_2\left(\left({\kappa\rho(\zeta)n}\right)^{-1/d}
\right)y\right\}\right)\rightarrow\e^{-y^{-\beta d}}$$ as
$n\to\infty$. Letting $y\downarrow 0$, it follows that
$\mu(\{x:M_n(x)\leq 0\})\rightarrow 0$, and, for $y<0$,
$$\mu(\{x:M_n(x)\leq u_n\})=\mu\left(\left\{x:M_n(x)\leq
g_2\left(\left({\kappa\rho(\zeta)n}\right)^{-1/d} \right)y\right\}\right)\leq
\mu(\{x:M_n(x)\leq 0\})\rightarrow 0.$$ So, we have, in this case, an EVL for $M_n$  of type $\ev_2$.

\textbf{Type $g_3$:} For $y<0$, we have
$\e^{-\tau_3(y)}=\e^{-(-y)^{\gamma d}}$.  To conclude that in this
case we have the Weibull e.v.d. with parameter $\gamma d$, we only
need  to check that for $y\geq0$, $\mu(\{x:M_n(x)\leq u_n\})=1$. In
fact, for $y\geq0$, since
$D-g_3\left(\left({\kappa\rho(\zeta)n}\right)^{-1/d} \right)>0$, we
have \begin{align*} \mu(\{x:M_n(x)\leq u_n\})&
=\mu\left(\left\{x:M_n(x)\leq
\left(D-g_3\left(\left({\kappa\rho(\zeta)n}\right)^{-1/d}\right)
\right)y+D\right\}\right)\\
&\geq \mu(\{x:M_n(x)\leq D\})=1.\end{align*} So we have, in this
case, an EVL for $M_n$  of type $\ev_3$.
\end{proof}

\begin{proof}[Proof of Theorem~\ref{thm:EVL=>HTS}.]
We assume that by hypothesis for every $y\in\R$ and some sequence
$u_n=u_n(y)$ such that $n \mu\left(\{x:\varphi(x)>
u_n(y)\}\right)\xrightarrow[n\to\infty]{}\tau(y)$,  we have
$$\lim_{n\to\infty}\mu\left(\{x:M_n(x)\leq
u_n(y)\}\right)=H(\tau(y)).$$ Given $t>0$ and a sequence
$\{\delta_n\}_{n\in\N}\subset \R^+$ with
$\delta_n\xrightarrow[n\to\infty]{}0$, we take $y\in\R$ such that
$t=\tau(y)$ and define $\ell_n:=\lfloor
t/(\kappa\rho(\zeta)\delta_n^d)\rfloor$. We can always find such $y$
because \eqref{eq:iid} is equivalent to \eqref{eq:un} and $\varphi$
is of the form \eqref{eq:observable-form}, where $g$ is of type
$g_i$, for some $i\in\{1,2,3\}$, which implies that $\hat M_n$ has a
limit law of type $\ev_i$.

First we show that
\begin{equation}
\label{eq:aux-1} g^{-1}\left(u_{\ell_n}\right)\sim\delta_n.
\end{equation}
If $n$ is sufficiently large, then
\[
\{x:\varphi(x)>u_n\}=\{x:g(\dist(x,\zeta))>u_n\}=\{x:\dist(x,\zeta)<g^{-1}
(u_n)\}= B_{g^{-1}(u_n)}(\zeta).
\]
Hence, by assumption on the sequence $u_n$, we have
$n\mu\left(B_{g^{-1}(u_n)}(\zeta)\right)\xrightarrow[n\to\infty]{}
\tau(y)=t$. As Lebesgue's
Differentiation Theorem holds for $\zeta\in \X$, we have
$\frac{\mu(B_\delta(\zeta))}{|B_\delta(\zeta)|} \to \rho(\zeta)$ as
$\delta\to 0$. Consequently, since it is obvious that
$g^{-1}(u_n)\to 0$ as $n\to\infty$, then
$n\left|B_{g^{-1}(u_n)}(\zeta)\right|\xrightarrow[n\to\infty]{}
t/\rho(\zeta)$. Thus, we may write $g^{-1}(u_n)\sim \left(\frac
t{\kappa n\rho(\zeta)}\right)^{1/d}$ and substituting $n$ by $\ell_n$ we are immediately
led to \eqref{eq:aux-1} by definition of $\ell_n$.

Next, using Lebesgue's Differentiation Theorem, again, we get
$\mu\left(B_{\delta_n}(\zeta)\right)\sim\rho(\zeta)\kappa\delta_n^d$
which easily implies that by definition of $\ell_n$,
\begin{equation}
\label{eq:aux-2} \frac t{\mu\left(B_{\delta_n}(\zeta)\right)}\sim \ell_n.
\end{equation}

Now we note that, as in \eqref{eq:rel-max-returns}
\begin{eqnarray}
\label{eq:Mln-rgeqln}
&&\{x:M_{\ell_n}(x)\leq u_{\ell_n}\}=\bigcap_{j=0}^{\ell_n-1}\{x:X_j(x)\leq
u_{\ell_n}\}
=\bigcap_{j=0}^{\ell_n-1}\{x:g(\dist(f^j(x),\zeta))\leq u_{\ell_n}\}\nonumber\\
&&=\bigcap_{j=0}^{\ell_n-1}\{x:\dist(f^j(x),\zeta)\geq
g^{-1}(u_{\ell_n})\}= \{x:r_{B_{g^{-1}(u_{\ell_n})}(\zeta)}(x)\geq
\ell_n\}.
\end{eqnarray}
At this point, we claim that
\begin{equation}
\label{eq:approximation-2}
\lim_{n\to\infty}\mu\left(\left\{x:r_{B_{\delta_n}(\zeta)}(x)\geq\frac t
{\mu(B_{\delta_n}(\zeta))}\right\}\right)=\lim_{n\to\infty}
\mu\left(\{x:M_{\ell_n}(x)\leq u_{\ell_n}\}\right).
\end{equation}
Then, the first part of the theorem follows, once we observe that,
by hypothesis, we have $$\mu\left(\{x:M_{\ell_n}(x)\leq
u_{\ell_n}\}\right)\xrightarrow[n\to\infty]{}H(\tau(y))=H(t).$$

The second part also follows since when the EVL of $M_n$ coincides
with that of $\hat M_n$, then $H(\tau(y))=\e^{-\tau(y)}$. This is
because in the i.i.d. setting \eqref{eq:un} is equivalent to
\eqref{eq:iid}, as we have already mentioned.

It remains to show that \eqref{eq:approximation-2} holds. First, observe that
\begin{align*}
  \mu\left(\left\{r_{B_{\delta_n}(\zeta)}\geq\frac t
{\mu(B_{\delta_n}(\zeta))}\right\}\right)&=
\mu\left(\{M_{\ell_n}\leq u_{\ell_n}\}\right)+
\left(\mu\left(\left\{r_{B_{\delta_n}(\zeta)}\geq \ell_n\right\}\right)-
\mu\left(\{M_{\ell_n}\leq u_{\ell_n}\}\right)\right)\\
&\quad+\left(\mu\left(\left\{r_{B_{\delta_n}(\zeta)}\geq\frac t
{\mu(B_{\delta_n}(\zeta))}\right\}\right)-\mu\left(\left\{r_{B_{\delta_n}(\zeta)}\geq
\ell_n\right\}\right)\right).
\end{align*}

For the third term on the right, note that by \eqref{eq:aux-2} we
have
\begin{multline*}
 \left|\mu\left(\left\{r_{B_{\delta_n}(\zeta)}\geq \ell_n\right\}\right)-
 \mu\left(\left\{r_{B_{\delta_n}(\zeta)}\geq\frac t
{\mu(B_{\delta_n}(\zeta))}\right\}\right) \right|
\\ =
\left| \mu\left(\left\{r_{B_{\delta_n}(\zeta)}\geq
\ell_n\right\}\right)- \mu\left(\left\{r_{B_{\delta_n}(\zeta)}\geq
(1+\varepsilon_n)\ell_n\right\}\right)\right|,
\end{multline*}
for some sequence $\{\varepsilon_n\}_n\in\N$ such that
$\varepsilon_n\to0$, as $n\to\infty$. By \eqref{eq:diff-r-m-k},
\eqref{eq:aux-2} and stationarity it follows that
\begin{equation*}
 \left| \mu\left(\left\{r_{B_{\delta_n}(\zeta)}\geq \ell_n\right\}\right)-
 \mu\left(\left\{r_{B_{\delta_n}(\zeta)}\geq (1+\varepsilon_n)\ell_n\right\}
 \right)\right|
  \leq |\varepsilon_n|\ell_n\mu\left(
B_{\delta_n}(\zeta)\right)\sim|\varepsilon_n|t \to 0,
\end{equation*}
as $n\to\infty$.

For the remaining term, using \eqref{eq:aux-1}, \eqref{eq:aux-2} and
\eqref{eq:Mln-rgeqln}, we have
\begin{align*}
\left|\mu\left(\left\{r_{B_{\delta_n}(\zeta)}\geq \ell_n\right\}\right)-\mu\left(\{M_{\ell_n}\leq u_{\ell_n}\}\right)\right|&=
\left|\mu\left(\left\{r_{B_{\delta_n}(\zeta)}\geq \ell_n\right\}\right)-\mu\left(\{r_{B_{g^{-1}(u_{\ell_n})}(\zeta)}\geq \ell_n\}\right)\right|\\
&\leq \sum_{i=1}^{\ell_n}\mu\left(f^{-i}\left(B_{\delta_n}(\zeta)\bigtriangleup
B_{g^{-1}(u_{\ell_n})}(\zeta)\right)\right)\\
&=\ell_n\mu\left(B_{\delta_n}(\zeta)\bigtriangleup
B_{g^{-1}(u_{\ell_n})}(\zeta)\right)\\
&\sim \frac t{\mu\left(B_{\delta_n}(\zeta)\right)}\left|\mu\left(B_{\delta_n}(\zeta)\right)-
\mu\left(B_{g^{-1}(u_{\ell_n})}(\zeta)\right)\right|\\
&=t\left|1-\frac{\mu\left(B_{g^{-1}(u_{\ell_n})}(\zeta)\right)}{
\mu\left(B_{\delta_n}(\zeta)\right)}\right|\to0
\end{align*}
as $n\to\infty$, which ends the proof of \eqref{eq:approximation-2}.
\end{proof}

\begin{proof}[Proof of Corollary~\ref{cor:only HTS in town}]
Let us assume the existence of HTS to balls around $\zeta$ (not
necessarily exponential). Then the first part of
Theorem~\ref{thm:HTS-implies-EVL} assures the existence of an EVL as
in \eqref{eq:def-EVL} for $M_n$ defined in \eqref{eq:def-max}. This
fact and the hypothesis that $D_1(u_n)$ holds allows us to use
\cite[Theorem~3.7.1]{LLR83} to conclude that there is $\theta>0$
such that $\lim_{n\to\infty}\mu(M_n\leq u_n)=\e^{-\theta \tau}$.
Finally, we use the first part of Theorem~\ref{thm:EVL=>HTS} to
conclude that we have HTS to balls centred on $\zeta$ of exponential
type.
\end{proof}

\section{Relation between hitting times and exceedance point
processes}\label{sec:link-HTPP-EPP}

We have already seen how to relate HTS and EVL.  We next show that if we enrich the process and the statistics by
considering either multiple returns or multiple exceedances we can
take the parallelism even further.

Given a sequence $\{\delta_n\}_{n\in\N}\subset \R^+$ such that
$\delta_n\xrightarrow[n\to\infty]{}0$, for each $j\in\N$, we define
the $j$-th \emph{waiting (or inter-hitting) time} as
\begin{equation}\label{eq:def-multiple-returns}
w^j_{B_{\delta_n}(\zeta)}(x)=
r_{B_{\delta_n}(\zeta)}\left(f^{w^1_{B_{\delta_n}(\zeta)}(x)
+\cdots+w^{j-1}_{B_{\delta_n}(\zeta)}(x) }(x)\right),
\end{equation}
and the \emph{$j$-th hitting time} as
\[
r^j_{B_{\delta_n}(\zeta)}(x)=\sum_{i=1}^{j}w^i_{B_{\delta_n}(\zeta)}(x).
\]
We define the \emph{Hitting Times Point Process} (HTPP) by
counting the number of hitting times during the time interval
$[0,t)$. However, since $\mu(B_{\delta_n}(\zeta))\to 0$, as
$n\to\infty$, then by Kac's Theorem, the expected waiting time
between hits is diverging to $\infty$ as $n$ increases. This fact
suggests a time re-scaling using the factor
$v^*_n:=1/\mu(B_{\delta_n}(\zeta))$, which is precisely the expected
inter-hitting time. Hence, for any $x\in\X$ and every $t\geq0$ define
\begin{equation}
\label{eq:def-HTPP}
N^*_n(t)=N^*_n([0,t),x):=\sup\left\{j:\,r^j_{B_{\delta_n}(\zeta)}(x)\leq
v^*_n t\right\}=\sum_{j=0}^ {\lfloor v^*_n
t\rfloor}\I_{B_{\delta_n}(\zeta)}\circ f^{j}
\end{equation}
When $x\in B_{\delta_n}(\zeta)$ and we consider the conditional
measure $\mu_{B_{\delta_n}(\zeta)}$ instead of $\mu$, then we refer
to $N^*_n(t)$ as the \emph{Return Times Point Process} (RTPP).

If we have exponential HTS, ($G(t)=\e^{-t}$ in
\eqref{eq:def-RTS-law}), then the distribution of the waiting time
before hitting $B_{\delta_n}(\zeta)$ is asymptotically exponential.
Also, if we assume that our systems are mixing, because in that case
we can think that the process gets renewed when we come back to
$B_{\delta_n}(\zeta)$, then one may look at the hitting times as the
sum of almost independent r.v.s that are almost exponentially
distributed. Hence, one would expect that the hitting times, when
properly re-scaled, should form a point process with a Poisson type
behaviour at the limit.

As discussed in Section~\ref{subsec:HTS intro}, for hyperbolic
systems,  it is indeed the case that we do get a Poisson Process as
the limit of HTPP.  The theory in \cite{HSV, BSTV} and
\cite{BTintstat} implies that if $f\in N\!F^2$ has an acip then we
have a Poisson limit for the HTPP.  We postpone a sketch of this
fact to Section~\ref{sec:Poisson RTS}, in order to keep our focus on
the relation between HTS and EVL here.  However, we would like to
remark that a key difference between proofs for the first hitting
time and for showing that we have a Poisson Point Process, if we are
using the theory started in \cite{HSV}, is that a further mixing
condition is required.

Now, we turn to an EVL point of view. In this context, one is
concerned with the occurrence of exceedances of the level $u_n$ for
the stationary stochastic process $X_0,X_1,\ldots$.  In particular,
we are interested in counting the number of exceedances, among a
random sample $X_0,\ldots,X_{n-1}$ of size $n$.  As in the previous
sections, we consider the stationary stochastic process defined by
\eqref{eq:def-stat-stoch-proc} and a sequence of levels
$\{u_n\}_{n\in\N}$ such that $n\mu(X_0>u_n)\to\tau>0$, as
$n\to\infty$. We define the \emph{exceedance point process} (EPP) by
counting the number of exceedances during the time interval $[0,t)$.
We re-scale time using the factor $v_n:=1/\mu(X>u_n)$ given by Kac's
Theorem, again. Then for any $x\in\X$ and every $t\geq0$, set
\begin{equation}
\label{eq:def-EPP} N_n(t)=N_n([0,t),x):=\sum_{j=0}^ {\lfloor v_n
t\rfloor}\I_{X_j>u_n}.
\end{equation}
The limit laws for these point processes can be used to assess the
impact and damage caused by rare events since they describe their
time occurrences, their individual impacts and accumulated effects.
Assuming that the process is mixing, we almost have a situation of
many Bernoulli trials where the expected number of successes is
almost constant ($n\mu(X>u_n)\to\tau>0$). Thus, we expect a Poisson
law as a limit. In fact, one should expect that the exceedance
instants, when properly normalised, should form a point process with
a Poisson Process as a limit, also. This is the content of
\cite[Theorem~5.2.1]{LLR83} which states that under $D_1(u_n)$ and
$D'(u_n)$, the EPP $N_n$, when properly normalised, converges in
distribution to a Poisson Process. (See \cite[Chapter~5]{LLR83},
\cite{HHL} and references therein for more information on the
subject).

Similarly to Theorems~\ref{thm:HTS-implies-EVL} and
\ref{thm:EVL=>HTS}, we show that if there exists a limiting
continuous time stochastic process for the HTPP, when properly
normalised, then the same holds for the EPP and vice-versa. In the
sequel $\xrightarrow[]{d}$ denotes convergence in distribution.

\begin{theorem}
  \label{thm:HTPP=>EPP}Let $(\X,\mathcal B, \mu,f)$ be a
dynamical system where $\mu$ is an acip and consider $\zeta\in\X$
for which Lebesgue's Differentiation Theorem holds.
  Suppose that for any sequence
  $\delta_n\xrightarrow[n\to\infty]{}0$ we have that the HTPP
   defined in \eqref{eq:def-HTPP} is \st
  $N^*_n\xrightarrow[n\rightarrow\infty]{d}N$, where
   $N$ is a
  continuous time stochastic process. Then, for the
  EPP defined in \eqref{eq:def-EPP} we also have
  $N_n\xrightarrow[n\rightarrow\infty]{d}N$.
\end{theorem}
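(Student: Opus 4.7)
The plan is to show that, for observables of the form \eqref{eq:observable-form}, an exceedance of the level $u_n$ is literally the same event as an entry into the ball $B_{g^{-1}(u_n)}(\zeta)$, so that the EPP and the HTPP coincide as random objects once we match the parameters. Concretely, I would set $\delta_n:=g^{-1}(u_n)$, which is well defined for large $n$ because $g$ is a strictly decreasing bijection on a neighbourhood of $0$. The condition $\delta_n\to 0$ is then automatic: Lebesgue differentiation at $\zeta$ yields $\mu(B_\delta(\zeta))\sim\rho(\zeta)\kappa\delta^d$, and the normalisation $n\mu(\{X_0>u_n\})=n\mu(B_{\delta_n}(\zeta))\to\tau>0$ forces $\delta_n\to 0$. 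This legitimises applying the hypothesis to our choice of radii.

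The central identification is the pointwise identity
$$\{X_j>u_n\}=\{g(\dist(f^j(x),\zeta))>u_n\}=f^{-j}(B_{\delta_n}(\zeta)),$$
which yields $\I_{\{X_j>u_n\}}=\I_{B_{\delta_n}(\zeta)}\circ f^j$ for every $j$. Taking $j=0$ gives $\mu(\{X_0>u_n\})=\mu(B_{\delta_n}(\zeta))$, and hence the two time-rescaling factors coincide: $v_n=v_n^*$. Comparing the definitions \eqref{eq:def-HTPP} and \eqref{eq:def-EPP}, this produces the equality $N_n(t,x)=N_n^*(t,x)$ for every $t\geq 0$ and every $x\in\X$, so the two point processes are indistinguishable as random elements.

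Since, by hypothesis, $N_n^*\xrightarrow[n\to\infty]{d}N$ for every null sequence of radii --- in particular for the $\delta_n$ chosen above --- the identification $N_n=N_n^*$ immediately yields $N_n\xrightarrow[n\to\infty]{d}N$. There is essentially no analytic obstacle: the proof is an unpacking of definitions exploiting the radial form of $\varphi$, and the only point genuinely requiring verification is that $\delta_n\to 0$ so that the hypothesis is legitimately applicable with this particular choice of ball radii.
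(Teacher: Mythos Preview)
Your proposal is correct and follows essentially the same approach as the paper: set $\delta_n:=g^{-1}(u_n)$, use the identity $\{X_j>u_n\}=f^{-j}(B_{\delta_n}(\zeta))$ to conclude $N_n\equiv N_n^*$, and invoke the hypothesis. If anything you are more careful than the paper, which omits the verification that $\delta_n\to 0$ and that the two rescaling factors $v_n,\,v_n^*$ agree.
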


\begin{proof}
  The result follows immediately once we set $\delta_n=g^{-1}(u_n)$
  and observe that for every $j,n\in\N$ and $x\in\X$
  we have $\{x:\,X_j>u_n\}=
  \{x:\,f^j(x)\in B_{g^{-1}(u_n)}(\zeta)\}$, which implies that
  $N_n(t)=N^*_n(t)$, for all $t\geq0$.
\end{proof}

\begin{corollary}
Suppose that $f\in N\!F^2$ and $f$ has an acip $\mu$.
Then, denoting by $N_n$ the associated EPP
as in \eqref{eq:def-EPP}, we have $N_n\xrightarrow[]{d}N$,
as $n\rightarrow\infty$, where $N$
denotes a Poisson Process with intensity $1$.
\label{cor:EPP for acip}
\end{corollary}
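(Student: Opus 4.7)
The plan is essentially to chain two ingredients: the Poisson limit for the HTPP (already available for $N\!F^2$ maps with an acip) and the transfer theorem (Theorem~\ref{thm:HTPP=>EPP}) which carries any such limit from the HTPP to the EPP. Since the transfer step is the easy half and has just been proved, the work is all done \emph{before} the corollary is reached: what we need to quote is the Poisson convergence $N^*_n\xrightarrow[]{d}N$ for the HTPP associated with balls centred at $\mu$-a.e. $\zeta\in I$, which is precisely the content sketched in Section~\ref{sec:Poisson RTS} and which distills the results of \cite{HSV,BSTV,BTintstat}.

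Concretely, I would proceed in three short steps. First, fix a point $\zeta\in I$ at which Lebesgue's Differentiation Theorem holds (a full $\mu$-measure set, since $\mu$ is absolutely continuous) and at which the multimodal acip theory invoked below applies. Second, for any sequence $\delta_n\to 0$, invoke Section~\ref{sec:Poisson RTS} to obtain $N^*_n\xrightarrow[n\to\infty]{d}N$, where $N$ is a Poisson Process of intensity $1$ and $N^*_n$ is the HTPP for the balls $B_{\delta_n}(\zeta)$ as in \eqref{eq:def-HTPP}. Third, given the stationary process $X_0,X_1,\ldots$ defined by \eqref{eq:def-stat-stoch-proc} and an observable of the form \eqref{eq:observable-form} centred at $\zeta$, pick any sequence of levels $\{u_n\}$ with $n\mu(X_0>u_n)\to 1$ and set $\delta_n:=g^{-1}(u_n)$. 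Then the identification $\{X_j>u_n\}=\{f^j(x)\in B_{g^{-1}(u_n)}(\zeta)\}$ gives $N_n(t)=N^*_n(t)$ pathwise for all $t\ge 0$, and Theorem~\ref{thm:HTPP=>EPP} applies to yield $N_n\xrightarrow[]{d}N$.

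The main obstacle is \emph{not} this corollary itself, which is a one-line consequence of Theorem~\ref{thm:HTPP=>EPP} once the Poisson convergence for the HTPP is known; rather, it is the prior result that $N^*_n\Rightarrow N$ for multimodal maps in $N\!F^2$ with an acip. That step requires upgrading the single-time exponential HTS (which follows from Corollary~\ref{cor:acip EVL} combined with Theorem~\ref{thm:EVL=>HTS}, or from \cite{BTintstat} directly) to convergence of all finite-dimensional distributions of the counting process, and for this one needs a stronger mixing-type condition in the spirit of \cite{HSV}, combined with the inducing and first-return-map techniques of \cite{BSTV,BTintstat}. Once this is in hand from Section~\ref{sec:Poisson RTS}, the remaining content of the corollary is the observation that intensity $1$ results from the Kac-normalisation $v_n=1/\mu(X_0>u_n)$, matching $v^*_n=1/\mu(B_{\delta_n}(\zeta))$ under the substitution $\delta_n=g^{-1}(u_n)$.
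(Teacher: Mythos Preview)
Your proposal is correct and matches the paper's own argument essentially line for line: the paper states that the hypotheses of Theorem~\ref{thm:HTPP=>EPP} are satisfied via the sketch in Section~\ref{sec:Poisson RTS}, so the corollary is immediate. Your third step even reproduces the (one-line) proof of Theorem~\ref{thm:HTPP=>EPP} itself, so you have in fact spelled out slightly more than the paper does.
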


The fact that the maps in this corollary satisfy the conditions of Theorem~\ref{thm:HTPP=>EPP} follows from the sketch in Section~\ref{sec:Poisson RTS}.  So the result is otherwise immediate.

\begin{theorem}
  \label{thm:EPP=>HTPP}Let $(\X,\mathcal B, \mu,f)$ be a
dynamical system where $\mu$ is an acip and consider $\zeta\in\X$
for which Lebesgue's Differentiation Theorem holds. Suppose that for
a sequence of levels $\{u_n\}_{n\in\N}$ such that
$n\mu(X_0>u_n)\to\tau>0$, as $n\to\infty$, the EPP defined in
\eqref{eq:def-EPP} is such that
  $N_n\xrightarrow[n\rightarrow\infty]{d}N$, where
    $N$ is a continuous time stochastic process. Then, for the
  HTPP defined in \eqref{eq:def-EPP} we also have
  $N^*_n\xrightarrow[n\rightarrow\infty]{d}N$.
\end{theorem}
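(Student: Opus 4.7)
The plan is to mimic the proof of Theorem~\ref{thm:HTPP=>EPP} almost verbatim, exploiting the fact that for a carefully matched sequence of radii the two point processes are not merely close, but literally identical as random variables. Concretely, given the hypothesized sequence of levels $\{u_n\}$ satisfying $n\mu(X_0>u_n)\to\tau>0$, the natural sequence for the HTPP is $\delta_n:=g^{-1}(u_n)$. One first checks that this is a legitimate choice: since $g$ is a strictly decreasing bijection in a neighbourhood of $0$ with global maximum at $0$ and $\mu(X_0>u_n)\to 0$, we must have $u_n\to g(0)$, hence $\delta_n=g^{-1}(u_n)\to 0$, so the HTPP $N^*_n$ is well-defined for this sequence.

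Next, using the special form \eqref{eq:observable-form} of the observable,
\[
\{x:X_j(x)>u_n\}=\{x:g(\dist(f^j(x),\zeta))>u_n\}=\{x:f^j(x)\in B_{g^{-1}(u_n)}(\zeta)\}=\{x:f^j(x)\in B_{\delta_n}(\zeta)\},
\]
so the indicator functions appearing in \eqref{eq:def-EPP} and \eqref{eq:def-HTPP} coincide term by term: $\I_{\{X_j>u_n\}}=\I_{B_{\delta_n}(\zeta)}\circ f^j$. Taking $\mu$-measures of the $j=0$ event gives $\mu(X_0>u_n)=\mu(B_{\delta_n}(\zeta))$, and hence the two normalising factors agree, $v_n=v^*_n$. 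It follows that for every $t\geq 0$ and every $x\in\X$,
\[
N_n(t,x)=\sum_{j=0}^{\lfloor v_n t\rfloor}\I_{\{X_j>u_n\}}(x)=\sum_{j=0}^{\lfloor v^*_n t\rfloor}\I_{B_{\delta_n}(\zeta)}(f^j(x))=N^*_n(t,x),
\]
so the two processes are pathwise identical, and the assumed convergence $N_n\xrightarrow{d} N$ transfers directly to $N^*_n\xrightarrow{d} N$.

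There is essentially no obstacle to this proof, since it reduces to the same tautological identification of events used in Theorem~\ref{thm:HTPP=>EPP}; no quantitative comparison between thresholds is needed, in contrast with Theorems~\ref{thm:HTS-implies-EVL} and \ref{thm:EVL=>HTS}, where Lebesgue's Differentiation Theorem had to be invoked to control the discrepancy between the radii dictated by the HTS rescaling and those induced by $g^{-1}(u_n)$. Here the Lebesgue Differentiation Theorem hypothesis is retained only for consistency of setting (and to match the context in which both point processes are customarily considered), but plays no active role in the argument.
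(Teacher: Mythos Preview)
Your argument is correct for the particular choice $\delta_n=g^{-1}(u_n)$, but this does not establish the theorem as stated. The conclusion is meant to hold for an \emph{arbitrary} sequence $\delta_n\to 0$: this is how ``HTS/HTPP to balls at $\zeta$'' is defined throughout the paper, and the paper's own proof opens with ``Given a sequence $\{\delta_n\}_{n\in\N}\subset\R^+$ with $\delta_n\to 0$ \ldots'', confirming this reading. The asymmetry with Theorem~\ref{thm:HTPP=>EPP} is precisely that there the hypothesis is ``for any sequence $\delta_n\to 0$'', so one is free to pick $\delta_n=g^{-1}(u_n)$ and obtain the tautological identity $N_n=N^*_n$. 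Here the roles are reversed: the hypothesis fixes a single sequence $u_n$ (via \eqref{eq:un}), while the conclusion must cover every $\delta_n\to 0$, and you cannot dispose of an arbitrary $\delta_n$ by declaring it to equal $g^{-1}(u_n)$.

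The paper closes this gap by, given an arbitrary $\delta_n\to 0$, constructing a re-indexing $\ell_n$ with $\delta_n\sim g^{-1}(u_{\ell_n})$ (exactly as in the proof of Theorem~\ref{thm:EVL=>HTS}), then bounding $\mu\bigl(|N_n^*(t)-N_{\ell_n}(t)|>0\bigr)$ by $k_n\,\mu\bigl(B_{\delta_n}(\zeta)\triangle B_{g^{-1}(u_{\ell_n})}(\zeta)\bigr)$ via stationarity, and showing this tends to $0$. Slutsky's Theorem then transfers the convergence of the subsequence $N_{\ell_n}$ to $N^*_n$. In particular, your assertion that Lebesgue's Differentiation Theorem ``plays no active role'' is incorrect: it is exactly what is needed to show $\delta_n\sim g^{-1}(u_{\ell_n})$ and hence that the symmetric difference of the two balls has negligible measure after multiplication by $k_n$.
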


\begin{proof}
  Given a sequence $\{\delta_n\}_{n\in \N}\subset \R^+$ with $\delta_n\xrightarrow[n\to\infty]{}0$ we define,
  as in the proof of Theorem~\ref{thm:EVL=>HTS}, the sequence
  $\ell_n$ such that $\delta_n\sim g^{-1}\left(u_{\ell_n}\right)$. Set
  $k_n:=\max\{v_n^*,v_{\ell_n}\}$ and
   observe that $|N_n^*(t)-N_{\ell_n}(t)|\leq \sum_{j=0}^{k_n}
   \I_{B_{\delta_n}(\zeta)\triangle B_{g^{-1}(u_{\ell_n})}(\zeta)}
   \circ f^j.$ Using stationarity we get
   \begin{align*}
    \mu\left(|N_n^*(t)-N_{\ell_n}(t)|>0\right)&\leq k_n
    \mu\left(B_{\delta_n}(\zeta)\triangle B_{g^{-1}(u_{\ell_n})}
    (\zeta)\right)\\&= k_n\left|\mu\left(B_{\delta_n}(\zeta)\right)-
    \mu\left(B_{g^{-1}(u_{\ell_n})}
    (\zeta)\right)\right|
    \xrightarrow[n\to\infty]{}0,
   \end{align*} by definition of $\ell_n$. The result now follows immediately
   by Slutsky's Theorem (see \cite[Theorem~6.3.15]{DM}).
\end{proof}

\section{Poisson Statistics via EVL}\label{sec:PoissonEVL}

As we have already mentioned, \cite[Theorem~5.2.1]{LLR83} states that for a stationary stochastic
process satisfying $D_1(u_n)$
and $D'(u_n)$, the EPP $N_n$ defined in \eqref{eq:def-EPP} converges in distribution
to a Poisson Process.

The main result in \cite{FF2} states that in order to prove an EVL for stationary
stochastic processes arising from a dynamical system, it suffices to
show conditions $D_2(u_n)$ and $D'(u_n)$. This proved to be an advantage over
\cite[Theorem~3.5.2]{LLR83} since the mixing information of systems is usually known through
decay of correlations that can be easily used to prove $D_2(u_n)$, as opposed to condition
$D_1(u_n)$ appearing in \cite[Theorem~3.5.2]{LLR83}.

Our goal here is to prove that we still get the Poisson limit if we
relax $D_1(u_n)$ so that it suffices to have sufficiently fast decay
of correlations of the dynamical systems that generate the
stochastic processes. However, for that purpose, one needs to
strengthen $D_2(u_n)$ in order to cope with multiple events.
(Something similar was necessary in the corresponding theory in
\cite{HSV}.)  For that reason we introduce condition $D_3(u_n)$
below, that still follows from sufficiently fast decay of
correlations, as $D_2(u_n)$ did, and together with $D'(u_n)$ allows
us to obtain the Poisson limit for the EPP.

Let $\S$ denote the semi-ring of subsets of $\R_0^+$ whose elements
are intervals of the type $[a,b)$, for $a,b\in\R_0^+$. Let $\RR$
denote the ring generated by $\S$. Recall that for every $A\in\RR$
there are $k\in\N$ and $k$ intervals $I_1,\ldots,I_k\in\S$ such that
$A=\cup_{i=1}^k I_j$. In order to fix notation, let
$a_j,b_j\in\R_0^+$ be such that $I_j=[a_j,b_j)\in\S$. For
$I=[a,b)\in\S$ and $\alpha\in \R$, we denote $\alpha I:=[\alpha
a,\alpha b)$ and $I+\alpha:=[a+\alpha,b+\alpha)$. Similarly, for
$A\in\RR$ define $\alpha A:=\alpha I_1\cup\cdots\cup \alpha I_k$ and
$A+\alpha:=(I_1+\alpha)\cup\cdots\cup (I_k+\alpha)$.

 For every $A\in\RR$ we define
\[
M(A):=\max\{X_i:i\in A\cap {\mathbb Z}\}.
\]
In the particular case where $A=[0,n)$ we simply write, as before,
$M_n=M[0,n).$

At this point, we propose:
\begin{condition}[$D_3(u_n)$]\label{cond:D^*} Let $A\in\RR$ and $t\in\N$.
We say that $D_3(u_n)$ holds for the sequence $X_0,X_1,\ldots$ if
\[ \mu\left(\{X_0>u_n\}\cap
  \{M(A+t
  )\leq u_n\}\right)-\mu(\{X_0>u_n\})
  \mu(\{M(A
  )\leq u_n\})\leq \gamma(n,t),
\]
where $\gamma(n,t)$ is nonincreasing in $t$ for each $n$ and
$n\gamma(n,t_n)\to0$ as $n\rightarrow\infty$ for some sequence
$t_n=o(n)$, which means that $t_n/n\to0$ as $n\to \infty$.
\end{condition}

Recalling the definition of the EPP $N_n(t)=N_n[0,t)$ given in
\eqref{eq:def-EPP}, we set $$N_n[a,b):=N(b)-N(a)=\sum_{j=\lceil
v_na\rceil}^{\lfloor v_nb\rfloor} \I_{\{X_j>u_n\}}.$$

We now state the main result of this section that gives the Poisson
statistics for the EPP under $D_3(u_n)$ and $D'(u_n)$.

\begin{theorem}\label{thm:D3+D'=>Poisson}
  Let $X_1, X_2,\ldots$ be a stationary stochastic process
  for which conditions $D_3(u_n)$ and $D'(u_n)$ hold for a
  sequence of levels $u_n$ such that $n\mu(X_0>u_n)
  \to\tau>0,$ as $n\to\infty$.
 Then the EPP $N_n$ defined
  in \eqref{eq:def-EPP} is such that
  $N_n\xrightarrow[]{d}N$, as $n\rightarrow\infty$, where $N$
  denotes a Poisson Process with intensity $1$.
\end{theorem}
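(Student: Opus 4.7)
The plan is to apply a Kallenberg-type criterion for convergence to a simple Poisson point process with diffuse intensity measure: $N_n\xrightarrow{d}N$ follows from verifying, for every $R\in\RR$,
\[
\E[N_n(R)] \to |R| \qand \mu(N_n(R) = 0) \to \e^{-|R|}.
\]
The mean convergence is immediate from stationarity and $v_n\mu(X_0>u_n)=1$. For the single-interval case $R=[0,t)\in\S$, the void probability equals $\mu(M_{\lfloor v_nt\rfloor+1}\leq u_n)$, which converges to $\e^{-t}$ by \cite[Theorem~1]{FF2}, since $D_3(u_n)$ trivially implies $D_2(u_n)$ (take $A=\{0,\ldots,\ell-1\}$ in the condition $D_3(u_n)$), and $D'(u_n)$ holds by hypothesis.

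For a general $R=\bigsqcup_{j=1}^k[a_j,b_j)\in \RR$, I would employ a two-scale block decomposition in the spirit of \cite[Theorem~5.2.1]{LLR83}. Fix a large integer $K$; within each interval $v_n[a_j,b_j)$, pack $s_j\approx K(b_j-a_j)/|R|$ consecutive integer blocks of common size $r_n\approx v_n|R|/K$, separated by gaps of length $t_n$ (where $t_n$ is the sequence provided by $D_3(u_n)$). Listing all $m=\sum_j s_j$ blocks in time order as $\Lambda_1<\cdots <\Lambda_m$, the gaps contribute at most $mt_n\mu(X_0>u_n)=O(Kt_n/v_n)\to 0$ to any exceedance count, so the task reduces to proving
\[
\mu\bigl(M(\textstyle\bigcup_i\Lambda_i)\leq u_n\bigr) \to \e^{-|R|}.
\]

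Peeling off $\Lambda_1$ via a union bound over its indices, stationarity, and applying $D_3(u_n)$ with $t=t_n$ (valid since the spacing from $\Lambda_1$ to $\bigcup_{i\geq 2}\Lambda_i$ is at least $t_n$), one obtains the one-sided factorization
\[
\mu\bigl(M(\textstyle\bigcup_{i\geq 1}\Lambda_i)\leq u_n\bigr) \geq (1-r_np_n)\,\mu\bigl(M(\textstyle\bigcup_{i\geq 2}\Lambda_i)\leq u_n\bigr) - r_n\gamma(n,t_n),
\]
where $p_n=\mu(X_0>u_n)$. Iterating $m-1$ times yields $\mu(M(\bigcup_i\Lambda_i)\leq u_n)\geq (1-r_np_n)^m-mr_n\gamma(n,t_n)$; the error is bounded by $n\gamma(n,t_n)\to 0$ thanks to $D_3(u_n)$, and $(1-r_np_n)^m\to\e^{-|R|}$ first as $n\to\infty$ (with $K$ fixed) and then as $K\to\infty$, since $mr_np_n\to|R|$ while $m(r_np_n)^2=|R|^2/K\to 0$. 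This gives $\liminf_n\mu(N_n(R)=0)\geq \e^{-|R|}$.

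The main obstacle is the matching upper bound, since $D_3(u_n)$ provides only a one-sided correlation bound. I would extract it by a second-moment estimate on $\mu\bigl(M(\bigcup_i\Lambda_i)>u_n\bigr)$ via second-order Bonferroni: the close-pair sum $\sum_{|i-j|\leq t_n}\mu(X_i>u_n,X_j>u_n)$ is $o(1)$ by $D'(u_n)$ (using that $D'$ forces $n\sum_{\ell=1}^{t_n}\mu(X_0>u_n,X_\ell>u_n)\to 0$ for a suitable $t_n$), while the far-pair contribution is handled by combining $D_3(u_n)$ with its automatic complementary form $\mu(X_0\leq u_n,M(A+t)\leq u_n)\geq \mu(X_0\leq u_n)\mu(M(A)\leq u_n)-\gamma(n,t)$ obtained from stationarity; a refinement by letting $K\to\infty$ after $n\to\infty$ then pins the limit to $\e^{-|R|}$. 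This delicate combination of the one-sided $D_3(u_n)$ with the clumping control in $D'(u_n)$ to produce a two-sided comparison is the genuinely new ingredient beyond the $D_2+D'\Rightarrow\text{EVL}$ argument of \cite{FF2}, and plays the role that the stronger condition $D_1(u_n)$ plays in the classical \cite[Theorem~5.2.1]{LLR83}.
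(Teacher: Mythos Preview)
Your overall plan---Kallenberg's criterion, then a block decomposition of each $v_nI_j$ into $\sim K|I_j|$ blocks of length $r_n\sim n/K$ separated by gaps of length $t_n$---is exactly what the paper does (see Proposition~\ref{prop:indep-increments} and the proof of Theorem~\ref{thm:D3+D'=>Poisson}). The mean convergence and the reduction to $\mu(M(\cup_i\Lambda_i)\leq u_n)$ are also the same.

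The gap is in your treatment of the upper bound. You correctly observe that $D_3(u_n)$ as written is one-sided and that this yields only
\[
\mu\bigl(M(\textstyle\bigcup_{i}\Lambda_i)\leq u_n\bigr)\geq (1-r_np_n)^m - mr_n\gamma(n,t_n),
\]
but your proposed route to the matching upper bound does not work. A second-order Bonferroni bound on $\mu(M(\cup_i\Lambda_i)>u_n)$ at the index level gives at best $\mu(M(\cup_i\Lambda_i)\leq u_n)\leq 1-|R|+\tfrac12|R|^2+o(1)$, not $\e^{-|R|}$; and the ``automatic complementary form'' of $D_3$ that you write down yields only \emph{lower} bounds on joint exceedance probabilities $\mu(X_0>u_n,\,M(A+t)>u_n)\geq p_n\,\mu(M(A)>u_n)-\gamma(n,t)$, which is the wrong direction for controlling the pair sums in Bonferroni. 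Letting $K\to\infty$ cannot repair this, since the block-level second-order term is of order $\binom{m}{2}(r_np_n)^2\sim |R|^2/2$ and does not vanish.

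What the paper actually does---and what you are missing---is Lemma~\ref{lem:relation-maximums_n}. That lemma gives a genuinely \emph{two-sided} comparison
\[
\Big|\mu(M(A_0\cup A)\leq u)-\mu(M(A)\leq u)+\sum_{i=0}^{r-1}\mu\big(\{X>u\}\cap\{M(A-i)\leq u\}\big)\Big|\leq 2r\sum_{i=1}^{r-1}\mu(\{X>u\}\cap\{X_i>u\})+t\,\mu(X>u),
\]
whose error is precisely the $D'(u_n)$-controlled short-range sum. Combining this with $D_3(u_n)$ one obtains a two-sided recursion
\[
\bigl|\mu(M(B_i\cup nA_{\ell-1})\leq u_n)-(1-r_np_n)\,\mu(M(B_{i-1}\cup nA_{\ell-1})\leq u_n)\bigr|\leq \Upsilon_{k,n},
\]
which iterates to $(1-r_np_n)^m$ with total error $m\Upsilon_{k,n}\to 0$. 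Note that in this step the paper uses $D_3(u_n)$ with absolute values; the one-sided inequality in the displayed condition is a typo (compare with $D_2(u_n)$, which is stated with $|\cdot|$, and with the applications via decay of correlations, which always give two-sided bounds). If you take $D_3(u_n)$ as two-sided, your ``peeling'' argument becomes two-sided as well, but you still need the $D'$-controlled correction from Lemma~\ref{lem:relation-maximums_n} to pass from $\sum_i\mu(\{X>u\}\cap\{M(A-i)\leq u\})$ to $r_np_n\,\mu(M(A)\leq u)$ without losing a factor; your sketch skips this, which is where the actual content of $D'(u_n)$ enters the Poisson argument.
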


As a consequence of this theorem, Theorem~\ref{thm:EPP=>HTPP} and
the results in \cite{FF} we get:

\begin{corollary}
  \label{cor:EPP-Poisson-BC} For any Benedicks-Carleson quadratic
  map $f_a$ (with $a\in\B$), consider a stochastic process
  $X_0, X_1,\ldots$ defined by \eqref{eq:def-stat-stoch-proc} and
  \eqref{eq:observable-form}, with $\zeta$ being either the critical
  point or the critical value. Then, denoting by $N_n$ the associated EPP
   as in \eqref{eq:def-EPP}, we have $N_n\xrightarrow[]{d}N$,
   as $n\rightarrow\infty$, where $N$
  denotes a Poisson Process with intensity $1$. Moreover, if we
  consider $N_n^*$, the HTPP as in \eqref{eq:def-HTPP},
  for balls around either
  the critical point or the critical value, then the same limit also
  applies to $N_n^*$.
\end{corollary}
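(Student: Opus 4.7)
The plan is to obtain the Poisson limit for the EPP by applying Theorem~\ref{thm:D3+D'=>Poisson}, and then to transfer it to the HTPP via Theorem~\ref{thm:EPP=>HTPP}. What must therefore be checked is that a Benedicks--Carleson quadratic map $f_a$ (with $a\in\B$), together with the stochastic process $X_0,X_1,\ldots$ built from the observable \eqref{eq:observable-form} with $\zeta$ equal to the critical point or the critical value, satisfies conditions $D_3(u_n)$ and $D'(u_n)$, and that the Lebesgue differentiation hypothesis needed by Theorem~\ref{thm:EPP=>HTPP} holds at $\zeta$.

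First, I would note that $D'(u_n)$ is already available from the main theorem of \cite{FF}: the EVL obtained there for $g_3$-type observables around the critical point/value is proved precisely by verifying $D_2(u_n)$ and $D'(u_n)$, so $D'(u_n)$ can be quoted directly. Second, the Lebesgue differentiation hypothesis holds at the critical point and the critical value because the density $\rho$ of the acip is of bounded variation (up to an explicit piece with integrable singularities at the endpoints $\pm 1$), so $\rho$ has a well-defined finite value at these points and $\mu(B_\delta(\zeta))/|B_\delta(\zeta)| \to \rho(\zeta)$ as $\delta\to 0$.

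The main work, and the principal obstacle, is verifying $D_3(u_n)$. The strategy here parallels the proof of $D_2(u_n)$ in \cite{FF}: condition $D_3(u_n)$ amounts to a correlation-type estimate
\[
\left|\mu\bigl(\{X_0>u_n\}\cap\{M(A+t)\leq u_n\}\bigr)
-\mu(X_0>u_n)\,\mu(M(A)\leq u_n)\right|\leq \gamma(n,t),
\]
where the first indicator can be taken to be the characteristic function of a ball $B_{g^{-1}(u_n)}(\zeta)$, hence of bounded variation on $[-1,1]$, and the second indicator, although corresponding to a more complicated (multiple-interval) event, is still a bounded measurable function on phase space, to which one applies the $f_a$-invariant decay of correlations against BV test functions. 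The idea is to use the decay of correlations between BV functions and $L^1(\mu)$ observables that one has for Benedicks--Carleson quadratic maps (either directly via \cite{BC85}-type estimates, or by inducing to a Young tower with exponential tails), since the second indicator lies in $L^\infty \subset L^1(\mu)$. This yields $\gamma(n,t)\leq C\|\I_{B_{g^{-1}(u_n)}(\zeta)}\|_{\text{BV}}\cdot \lambda^t$ for some $\lambda\in(0,1)$, and the $\text{BV}$-norm is controlled by a constant uniformly in $n$ (it is essentially bounded, independent of the radius). Taking $t_n=(\log n)^2$ gives $n\gamma(n,t_n)\to 0$ with $t_n=o(n)$, as required.

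Once $D_3(u_n)$ and $D'(u_n)$ are established, Theorem~\ref{thm:D3+D'=>Poisson} produces the Poisson limit $N_n\xrightarrow{d} N$ for the EPP. Finally, since $\mu$ is an acip and Lebesgue differentiation holds at $\zeta$, Theorem~\ref{thm:EPP=>HTPP} applies and transfers this convergence to the HTPP, yielding $N_n^*\xrightarrow{d} N$ as well. The delicate step is the $D_3(u_n)$ verification; the two other ingredients are either quoted from \cite{FF} or follow from the regularity of the density of the acip.
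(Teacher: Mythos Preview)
Your proposal is correct and follows essentially the same route as the paper: quote $D'(u_n)$ from \cite{FF,FF2}, verify $D_3(u_n)$ via exponential decay of correlations against BV test functions (the paper cites \cite{KN92,Yo92} and takes $\phi=\I_{\{X_0>u_n\}}$, $\psi=\I_{\{M(A)\leq u_n\}}$, obtaining $\gamma(n,t)=2C\e^{-\alpha t}$ with $t_n=\sqrt n$), then apply Theorem~\ref{thm:D3+D'=>Poisson} and Theorem~\ref{thm:EPP=>HTPP}. The only differences are cosmetic: the paper uses $t_n=\sqrt n$ rather than $(\log n)^2$, and it adds the remark that since exceedances of $u_n$ correspond to entries into $B_{\delta_n}(\zeta)$, condition $D'(u_n)$ established in \cite{FF} for $g_3$ automatically transfers to observables $g$ of any type.
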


With minor adjustments to \cite{Collexval}, we  can use
Theorem~\ref{thm:D3+D'=>Poisson} to show that, similarly to
Corollary~\ref{cor:EPP for acip}, interval maps with exponential
decay of correlations have Poisson statistics for the EPP.  However,
we will not state this result here, since we prove a more general
result (which works in higher dimensions) in
Section~\ref{sec:EVL-HTS-higher-dim}.

\iffalse With some minor adjustments, the results in
\cite{Collexval}  imply the following.  Since we actually prove this
result in the higher dimensional setting in
Section~\ref{sec:EVL-HTS-higher-dim}, we leave these adjustments to
the reader.

\begin{corollary}
  \label{cor:EPP-Poisson-Collet}Consider the system $(I,f,\mu)$ where $f$
  is non-uniformly hyperbolic $C^2$ map of the interval which admits an
 acip $\mu$,with exponential decay of correlations
 (just as in \cite{Collexval}). Consider a stochastic process
  $X_0, X_1,\ldots$ defined by \eqref{eq:def-stat-stoch-proc} and
  \eqref{eq:observable-form}, for some $\zeta$ in a full $\mu$-measure set
  given by \cite[Theorem~1.1]{Collexval}. Then, denoting by $N_n$ the associated EPP
   as in \eqref{eq:def-EPP}, we have $N_n\xrightarrow[]{d}N$,
   as $n\rightarrow\infty$, where $N$
  denotes a Poisson Process with intensity $1$. Moreover, if we
  consider $N_n^*$, the HTPP as in \eqref{eq:def-HTPP},
  for balls around $\zeta$, then the same limit also
  applies to $N_n^*$.
\end{corollary}

Collet noted that maps satisfying the above conditions are satisfied
for examples in \cite{Yorrr} and \cite{Yostat}.  Although in some
sense Corollary~\ref{cor:EPP for acip} is rather stronger than this,
in the sense that the maps there did not need to have an exponential
decay condition, this corollary is still useful for maps which are
not in $N\! F^2$. \textbf{MT: still need distortion!} \fi

\subsection{Proofs of the results}
 In this section we prove Theorem~\ref{thm:D3+D'=>Poisson} and Corollary~\ref{cor:EPP-Poisson-BC}. The key is
 Proposition~\ref{prop:indep-increments} whose proof we prepare with
 the following two Lemmas.  These are very similar to ones
 in \cite[Section~3]{FF2} and
 \cite[Section~3]{Collexval}, but we redo them here for completeness
 and because, in contrast to the original ones, we need them to take care
 of events that depend on nonconsecutive random variables.
\begin{lemma}
  \label{lem:prob-of-union}
  For any $\ell\in\N$ and $u\in\R$ we have
  \[
  \sum_{j=0}^{\ell-1} \mu(X_j> u)\geq \mu(M_\ell> u)
  \geq \sum_{j=0}^{\ell-1} \mu(X_j> u)-
  \sum_{j=0}^{\ell-1}\sum_{i=0,i\neq j}^{\ell-1} \mu(\{X_j> u\}\cap
  \{X_i> u\})
  \]
\end{lemma}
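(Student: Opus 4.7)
The plan is to reduce the lemma to the first two Bonferroni inequalities applied to the obvious union representation
\[
\{M_\ell>u\}=\bigcup_{j=0}^{\ell-1}\{X_j>u\}.
\]
Once this set identity is written down, both inequalities are standard facts about measures of finite unions.

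For the upper bound, I would simply invoke subadditivity: $\mu\bigl(\bigcup_{j}\{X_j>u\}\bigr)\le\sum_{j=0}^{\ell-1}\mu(X_j>u)$, which is exactly the left inequality.

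For the lower bound, the natural tool is the second Bonferroni inequality, namely
\[
\mu\Bigl(\bigcup_{j=0}^{\ell-1} A_j\Bigr)\ge \sum_{j=0}^{\ell-1}\mu(A_j)-\sum_{0\le j<i\le \ell-1}\mu(A_j\cap A_i),
\]
applied with $A_j=\{X_j>u\}$. I would justify this inequality by the pointwise estimate $\mathbf{1}_{\cup_j A_j}\ge \sum_j \mathbf{1}_{A_j}-\sum_{j<i}\mathbf{1}_{A_j\cap A_i}$: writing $k(x)=\#\{j:x\in A_j\}$, the right-hand side equals $k(x)-\binom{k(x)}{2}$, and an elementary check shows $k-\binom{k}{2}\le 1$ for every integer $k\ge 0$ (with equality at $k=1,2$ and negative values for $k\ge 3$), so the inequality against $\mathbf{1}_{\cup_j A_j}\in\{0,1\}$ holds; integrating against $\mu$ yields the claim.

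To match exactly the form stated in the lemma, I would finally relax the ordered sum $\sum_{j<i}$ to the symmetric one $\sum_{j\ne i}$ using
\[
\sum_{0\le j<i\le \ell-1}\mu(A_j\cap A_i)\;=\;\tfrac12\sum_{\substack{0\le j,i\le \ell-1\\ j\ne i}}\mu(A_j\cap A_i)\;\le\;\sum_{\substack{0\le j,i\le \ell-1\\ j\ne i}}\mu(A_j\cap A_i),
\]
which weakens the bound by a factor of two but puts it in the stated form. No genuine obstacle arises here — the argument is entirely combinatorial and requires no dynamical input (in particular no stationarity or mixing); the only mildly delicate point is verifying the pointwise inequality that underlies the second Bonferroni bound.
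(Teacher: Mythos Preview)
Your proposal is correct and takes essentially the same approach as the paper: the paper's proof simply says this is ``a straightforward consequence of the formula for the probability of a multiple union of events'' and cites Feller, which is exactly the Bonferroni/inclusion--exclusion argument you spell out. Your version is just a self-contained unpacking of that citation, including the observation that the symmetric double sum $\sum_{j\ne i}$ dominates the ordered one $\sum_{j<i}$.
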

\begin{proof}
This is a straightforward consequence of the formula for the
probability of a multiple union of events. See for example the first
Theorem of Chapter~4 in \cite{Fe52}.
\end{proof}

\begin{lemma}
  \label{lem:relation-maximums_n}
  Assume that $r,s,\ell,t$ are nonnegative integers. Suppose that
  $A,B\in\RR$
  are such that $A\subset B$. Set
$\ell:=\#\{j\in\N:j\in B\setminus A\}$. Assume that $\min\{x:x\in
A\}\geq r+t$ and let $A_0=[0,r+t)$. Then, we have
  \begin{equation}\label{lem:relation-maximums-eq1_n}
  0\leq \mu(M(A)\leq u)-\mu(M(B)\leq u)\leq\ell \cdot \mu(X>u)
  \end{equation}
  and
  \begin{multline}\label{lem:relation-maximums-eq2_n}
  \left|\mu(M(A_0\cup A)\leq u)-\mu(M(A)\leq u)+
  \sum_{i=0}^{r-1} \mu\left(\{X>u\}\cap
  \{M(A-i)\leq u\}\right)\right|\\ \leq 2r\sum_{i=1}^{r-1}
  \mu(\{X>u\}\cap\{X_i>u\})+t\mu(X>u).
  \end{multline}
\end{lemma}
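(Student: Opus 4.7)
The plan is to handle the two inequalities separately. The first is essentially a union bound, while the second requires a careful application of Bonferroni's inequality together with the stationarity of the process.

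For \eqref{lem:relation-maximums-eq1_n}, since $A \subset B$ we have $M(A) \leq M(B)$ pointwise, so $\{M(B) \leq u\} \subset \{M(A) \leq u\}$, which gives the nonnegativity. The symmetric difference is $\{M(A) \leq u\} \cap \{M(B \setminus A) > u\}$, and a union bound over the $\ell$ integer indices lying in $B \setminus A$, combined with stationarity, yields the upper bound $\ell \mu(X_0 > u)$.

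For \eqref{lem:relation-maximums-eq2_n}, the hypothesis $\min A \geq r+t$ guarantees $A_0 \cap A = \emptyset$, so $\mu(M(A_0 \cup A) \leq u) = \mu(\{M(A_0) \leq u\} \cap \{M(A) \leq u\})$ and hence
\[
\mu(M(A) \leq u) - \mu(M(A_0 \cup A) \leq u) = \mu(\{M(A) \leq u\} \cap \{M(A_0) > u\}).
\]
The task reduces to comparing this quantity with $\sum_{i=0}^{r-1} \mu(\{X_i > u\} \cap \{M(A) \leq u\})$, which by stationarity equals $\sum_{i=0}^{r-1} \mu(\{X_0 > u\} \cap \{M(A-i) \leq u\})$; the shift $A-i$ consists of nonnegative integers since $\min A \geq r+t > r-1 \geq i$.

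To this end I would split $A_0 = [0, r) \sqcup [r, r+t)$. The contribution of the block $[r, r+t)$ to $\mu(\{M(A) \leq u\} \cap \{M(A_0) > u\})$ is at most $\mu(M[r, r+t) > u) \leq t\mu(X_0 > u)$ by a union bound, producing the $t\mu(X_0 > u)$ term. For the block $[0, r)$, Bonferroni's inequality applied to $\bigcup_{i=0}^{r-1} \{X_i > u\}$ intersected with $\{M(A) \leq u\}$ sandwiches $\mu(\{M[0,r) > u\} \cap \{M(A) \leq u\})$ between $\sum_{i=0}^{r-1} \mu(\{X_i > u\} \cap \{M(A) \leq u\})$ and the same sum minus $\sum_{0 \leq i < j \leq r-1} \mu(\{X_i > u\} \cap \{X_j > u\})$. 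Grouping pairs by $k = j-i$ and using stationarity, this second-order remainder is dominated by $\sum_{k=1}^{r-1}(r-k)\mu(\{X_0 > u\} \cap \{X_k > u\}) \leq r\sum_{k=1}^{r-1}\mu(\{X_0 > u\} \cap \{X_k > u\})$, which is certainly bounded by the $2r$ coefficient appearing in the statement.

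The only delicate point is bookkeeping: keeping track of signs between $\mu(M(A) \leq u)$ and $\mu(M(A_0 \cup A) \leq u)$, verifying that the shifted sets $A-i$ remain in $\N$ so that stationarity applies, and deploying Bonferroni so that the sign of the correction matches the absolute value on the left-hand side. Beyond these, no substantive estimate is needed other than the union bound and the two-term Bonferroni inequality.
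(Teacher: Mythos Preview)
Your proposal is correct and follows essentially the same route as the paper: a union bound for \eqref{lem:relation-maximums-eq1_n}, and for \eqref{lem:relation-maximums-eq2_n} the splitting $A_0=[0,r)\cup[r,r+t)$, a union bound on the $t$-block, and a two-sided Bonferroni estimate on $[0,r)$ combined with stationarity to pass from $\{X_i>u\}\cap\{M(A)\leq u\}$ to $\{X>u\}\cap\{M(A-i)\leq u\}$. The paper's version of Bonferroni (its Lemma~\ref{lem:prob-of-union}) uses the symmetric double sum $\sum_j\sum_{i\neq j}$, which is why the factor $2r$ appears; your observation that the unordered-pair count already gives $r\sum_{k=1}^{r-1}\mu(\{X>u\}\cap\{X_k>u\})$ is correct and simply a slightly sharper constant.
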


\begin{proof}
By the law of total probability and stationarity we have, for any $i\ge0$,
\begin{align*}
   \mu(M(A)\leq u)&= \mu(M(B)\leq u)+ \mu(\{M(A)\leq u\}\cap\{M(B\setminus A)>u\})
\\
&\le  \mu(M(B)\leq u)+ \mu(M(B\setminus A)>u)\\
&\leq \mu(M(B)\leq u)+\ell \mu(X>u)
\end{align*}
and the first statement of the Lemma follows.

 \noindent For the second statement observe that
$$
\{M(A_0\cup A)\leq u\}=\{M([0,r))\leq u\}\cap\{M([r,r+t))\leq
u\}\cap\{M(A)\leq u\}.
$$
Consequently,
$$
\left(\{M([0,r))\leq u\}\cap\{M(A)\leq u\}\right)\setminus
\{M(A_0\cup A)\leq u\}
 \subset \{M([r,r+t))> u\}.
$$
\noindent Thus, using the first inequality of
Lemma~\ref{lem:prob-of-union} we obtain
\begin{equation}
\label{eq:estimativa1} \big|\mu(\{M([0,r))\leq u\}\cap\{M(A)\leq
u\})-\mu(\{M(A_0\cup A)\leq u\}) \big| \le t
 \mu(X>u)\;.
\end{equation}
\noindent Using stationarity and the first inequality in
Lemma~\ref{lem:prob-of-union} we have
\begin{align*}
\mu(\{M([0,r))\leq u\}\cap\{M(A)\leq u\})&= \mu(\{M(A)\leq
u\})-\mu(\{M([0,r))> u\}\cap
\{M(A)\leq u\})\\
&\geq \mu(\{M(A)\leq u\})- \sum_{i=0}^{r-1} \mu(\{X_i>u\}\cap
  \{M(A)\leq u\}).
\end{align*}
Now, by the second inequality in Lemma~\ref{lem:prob-of-union} we
have
\begin{align*}
\mu(\{M([0,r))\leq u\}\cap\{M(A)\leq u\})&\leq \mu(\{M(A)\leq u\})-
\sum_{i=0}^{r-1} \mu(\{X_i>u\}\cap
  \{M(A)\leq u\})\\
  &\quad+\sum_{i=0}^{r-1}\sum_{\ell=0,i\neq\ell}^{r-1} \mu(\{X_i>u\}\cap
  \{X_\ell>u\}\cap \{M(A)\leq u\}).
\end{align*}
Finally, stationarity and the last three inequalities give
\begin{multline*}
\Big| \mu(\{M([0,r))\leq u\}\cap\{M(A)\leq u\})-
 \mu(\{M(A)\leq u\}) +\sum_{i=0}^{r-1} \mu(\{X>u\}\cap
  \{M(A-i)\leq u\})\Big|\\
\leq 2r\sum_{i=1}^{r-1} \mu(\{X>u\}\cap\{X_i>u\}),
\end{multline*}
 and the result follows by \eqref{eq:estimativa1}.
\end{proof}

\begin{proposition}\label{prop:indep-increments} Let $A\in\RR$ be \st
that $A=\bigcup_{j=1}^p I_j$ where $I_j=[a_j,b_j)\in\S$,
$j=1,\ldots,p$ and $a_1<b_1<a_2<\cdots<b_{p-1}<a_p<b_p$. Let
$\{u_n\}_{n\in\N}$ be such that $n\mu(X_0>u_n)\to\tau>0$, as
$n\to\infty$, for some $\tau\geq 0$. Assume that conditions
$D_3(u_n)$ and $D'(u_n)$ hold. Then,
\[
\mu\left(M\left(\n A\right)\leq
u_n\right)\xrightarrow[n\rightarrow+\infty]{}\prod_{j=1}^p \mu(M(\n
I_j)\leq u_n)=\prod_{j=1}^p \e^{-\tau(b_j-a_j)}.
\]
\end{proposition}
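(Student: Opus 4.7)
The final equality in the statement follows by applying the single-interval EVL of \cite[Theorem~1]{FF2} (valid under $D_2(u_n)$, which is implied by $D_3(u_n)$, together with $D'(u_n)$) separately to each $nI_j$, using $\lfloor n(b_j-a_j)\rfloor\mu(X_0>u_n)\to\tau(b_j-a_j)$. So the real work is establishing the factorisation
\[\mu(M(nA)\leq u_n)=\prod_{j=1}^{p}\mu(M(nI_j)\leq u_n)+o(1).\]
The plan is to prove this by a blocking argument adapted from the proof of \cite[Theorem~1]{FF2}.

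Fix $k\in\N$. Subdivide each $nI_j$ into $k$ consecutive sub-blocks $J_{j,1},\dots,J_{j,k}$ of length at most $\lfloor n/k\rfloor$, inserting a gap of size $t_n$ (the sequence from $D_3(u_n)$) between consecutive sub-blocks of the same interval, and set $S=\bigcup_{j,l}J_{j,l}$. By \eqref{lem:relation-maximums-eq1_n}, passing from $nA$ to $S$ costs at most $kp\,t_n\mu(X_0>u_n)=O(kp\,t_n/n)$, which vanishes for fixed $k$ as $n\to\infty$; within $S$, consecutive sub-blocks are separated by at least $t_n$ (either via a freshly inserted gap, or via the natural gap $n(a_{j+1}-b_j)$ between successive $nI_j$'s, of order $n$). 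Now apply \eqref{lem:relation-maximums-eq2_n} iteratively, peeling off one sub-block $J$ at a time from the front of the remaining set $A$. Using $D_3(u_n)$ (read two-sided, as it arises from decay of correlations of indicators just like $D_2(u_n)$) together with stationarity, each summand $\mu(\{X_0>u_n\}\cap\{M(A-i)\leq u_n\})$ with $0\leq i\leq|J|-1$ differs from $\mu(X_0>u_n)\mu(M(A)\leq u_n)$ by at most $\gamma(n,t_n)$, and the peel yields
\[
\mu(M(J\cup A)\leq u_n)=\bigl(1-|J|\mu(X_0>u_n)\bigr)\mu(M(A)\leq u_n)+O\Bigl(|J|\gamma(n,t_n)+|J|\sum_{i=1}^{|J|-1}\mu(\{X_0>u_n\}\cap\{X_i>u_n\})+t_n\mu(X_0>u_n)\Bigr).
\]

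Since $|J|\leq\lfloor n/k\rfloor$, the middle error at each peel is controlled by $D'(u_n)$, and Lemma~\ref{lem:prob-of-union} combined with $D'(u_n)$ identifies $1-|J_{j,l}|\mu(X_0>u_n)=\mu(M(J_{j,l})\leq u_n)+o(1/k)$. Iterating across all $kp$ sub-blocks produces a total accumulated error of order $O\bigl(p\,n\sum_{i=1}^{\lfloor n/k\rfloor}\mu(\{X_0>u_n\}\cap\{X_i>u_n\})+kp\,t_n/n+kp\,\gamma(n,t_n)\bigr)$, which vanishes as $n\to\infty$ and then $k\to\infty$ by $D'(u_n)$, $t_n=o(n)$, and $n\gamma(n,t_n)\to0$ respectively; hence $\mu(M(nA)\leq u_n)=\prod_{j,l}\mu(M(J_{j,l})\leq u_n)+o(1)\to\prod_j\bigl(1-\tau(b_j-a_j)/k\bigr)^k+o(1)\to\prod_je^{-\tau(b_j-a_j)}$. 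The main obstacle will be the careful bookkeeping of these three families of error terms so that they vanish in the correct iterated limit; one also needs at each peel that the remaining $A$ still lies in $\RR$, which is automatic since removing a front sub-block preserves the finite-disjoint-union-of-intervals structure.
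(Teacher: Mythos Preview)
Your approach is essentially the paper's own proof: subdivide $nA$ into short blocks separated by $t_n$-gaps, peel off one block at a time via Lemma~\ref{lem:relation-maximums_n}, apply $D_3(u_n)$ (with absolute values, as the paper also implicitly does) at each peel, control the three families of error terms by $D'(u_n)$, $t_n=o(n)$, and $n\gamma(n,t_n)\to 0$, and finish by letting $k\to\infty$. Two harmless bookkeeping slips to fix: the paper keeps block length $r_n=\lfloor n/k\rfloor$ fixed and lets the number $S_j\sim k(b_j-a_j)$ of blocks per interval vary (your ``$k$ blocks each of length $\le\lfloor n/k\rfloor$'' is inconsistent unless every $b_j-a_j\le 1$), and the accumulated $D_3$-error over all peels is $\sum_{j,l}|J_{j,l}|\,\gamma(n,t_n)\approx n|A|\,\gamma(n,t_n)$ rather than $kp\,\gamma(n,t_n)$; both corrections still vanish under the stated hypotheses.
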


\begin{proof}
Let $h:=\inf_{j\in \{1,\ldots,p\}}\{b_j-a_j\}$ and
$H:=\lceil\sup\{x:x\in A\}\rceil$. Take $k>2/h$ and $n$ sufficiently
large. Note this guarantees that if we partition $n[0,H]\cap
{\mathbb Z}$ into blocks of length $r_n:=\lfloor n/k\rfloor$,
$J_1=[Hn-r_n,Hn)$, $J_2=[Hn-2r_n,Hn-r_n)$,\ldots,
$J_{Hk}=[Hn-Hkr_n,n-(Hk-1)r_n)$, $J_{Hk+1}=[0,Hn-Hkr_n)$, then there is
more than one of these blocks contained in $\n I_i$.  Let
$S_\ell=S_\ell(k)$ be the number of blocks $J_j$ contained in $\n
I_\ell$, that is,
$$S_\ell:=\#\{j\in \{1,\ldots,Hk\}:J_j\subset \n I_\ell\}.$$
As we have already observed $S_\ell>1$ $\forall \ell\in \{1,\ldots,p\}$.
For each $\ell\in \{1,\ldots,p\}$, we define
$$A_\ell:=\bigcup_{i=1}^{\ell}I_{p-i+1}.$$
Set $i_\ell:=\min\{j\in \{1,\ldots,k\}:J_j\subset \n I_{\ell}\}.$
Then $J_{i_\ell},J_{i_\ell+1},\ldots,J_{i_\ell+s_\ell}\subset n
I_\ell$. Now, fix $\ell$ and for each $ i\in \{i_{p-\ell
+1},\ldots,i_{p-\ell +1}+S_{p-\ell+1}\}$ let
$$
B_i:=\bigcup_{j=i_{p-\ell+1}}^i J_j,\;  J_i^*:=[Hn-ir_n,
Hn-(i-1)r_n-t_n)\; \mbox{ and } J_i':=J_i-J_i^*.$$ Note that
$|J_i^*|=r_n-t_n$ and $|J_i'|=t_n$. See Figure~\ref{fig:notation}
for more of an idea of the notation here.
\begin{figure}[h]
  \includegraphics[scale=1]{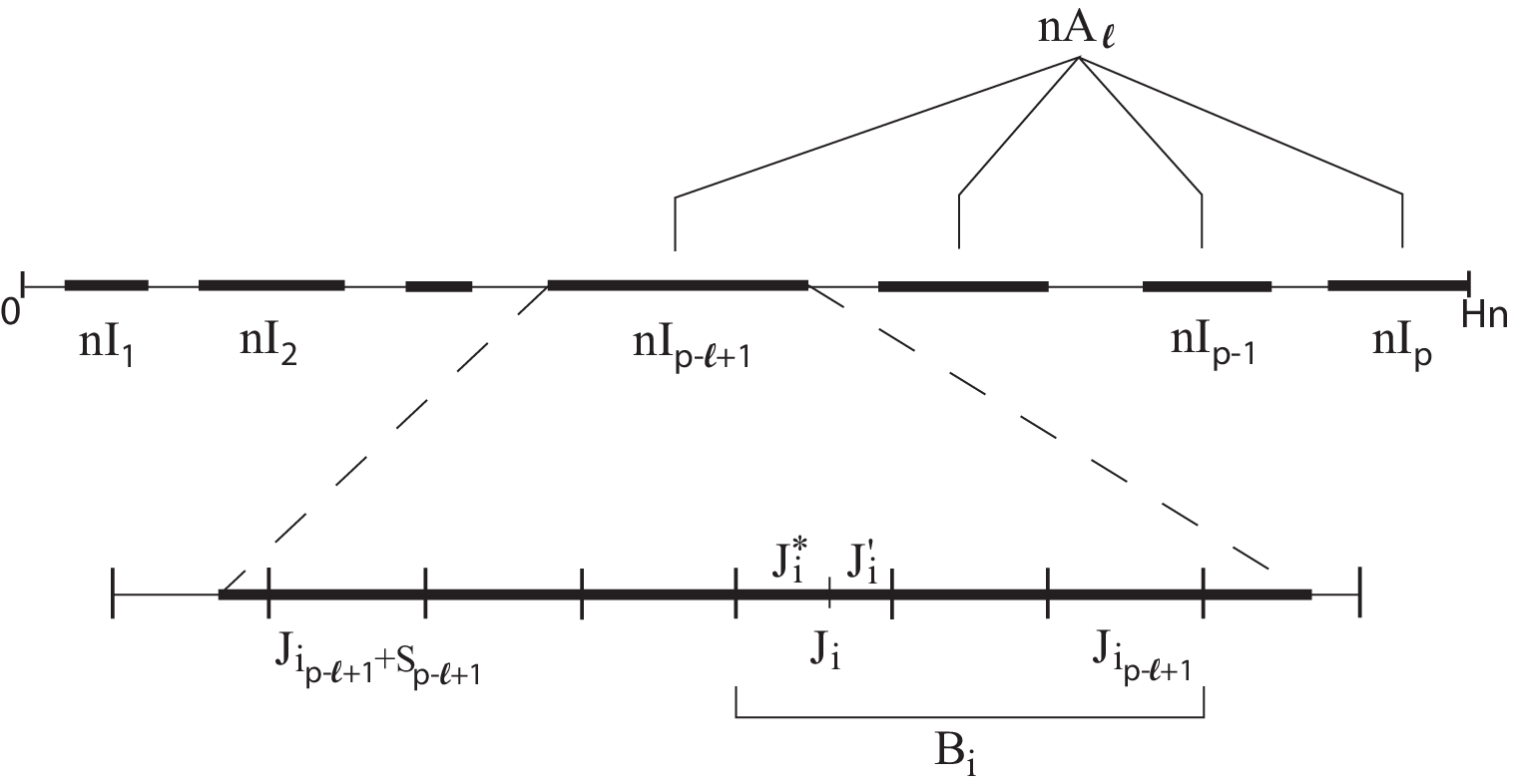} \caption{Notation}\label{fig:notation}
\end{figure}

We have,
\begin{align*}
|\mu(M(B_i\cup \n & A_{\ell-1})\leq
u_n)-(1-r_n\mu(X>u_n))\mu(M(B_{i-1}\cup \n A_{\ell-1})\leq u_n)|\\
&=\Big|\mu(M(B_i\cup \n A_{\ell-1})\leq u_n)-\mu(M(B_{i-1}\cup \n
A_{\ell-1})\leq u_n)\\&\qquad\qquad\quad+r_n \mu(X>
u_n)\mu(M(B_{i-1}\cup \n A_{\ell-1})
\leq u_n)\Big|\\
& \leq \Big|\mu(M(B_i\cup \n A_{\ell-1})\leq u_n)-\mu(M(B_{i-1}\cup
\n A_{\ell-1})\leq u_n)\\
&\qquad\qquad\quad+(r_n-t_n)\mu(X> u_n)\mu(M(B_{i-1}\cup \n A_{\ell-1}) \leq
u_n)\Big|\\
 &\quad+t_n\mu(X> u_n)\mu(M(B_{i-1}\cup \n A_{\ell-1}) \leq u_n)\\
&\leq \Big|\mu(M(B_i\cup \n A_{\ell-1})\leq u_n)-\mu(M(B_{i-1}\cup
\n A_{\ell-1})\leq u_n)\\
&\qquad\qquad\quad+\sum_{j=0}^{r_n-t_n-1}\mu(\{X_{j+Hn-ir_n}>u_n\}\cap
\{M(B_i\cup \n A_{\ell-1})\leq u_n)\}\Big|\\
&\quad+\Big|(r_n-t_n)\mu(X>u_n)\mu(M(B_{i-1}\cup \n A_{\ell-1})\leq
u_n)\\
&\qquad\qquad\quad-\sum_{j=0}^{r_n-t_n-1}\mu(\{X_{j+Hn-ir_n}>u_n\}\cap
\{M(B_i\cup \n A_{\ell-1})\leq u_n)\}\Big|\\&\quad+t_n\mu(X>u_n).
\end{align*}
By Lemma \ref{lem:relation-maximums_n}, we obtain
\begin{align*}
\Big|\mu(M(B_i\cup \n& A_{\ell-1})\leq u_n) -(1-r_n\mu(X>u_n))\mu\left(M(B_{i-1}\cup \n A_{\ell-1})\leq u_n\right)\Big|\\
&\leq 2(r_n-t_n)\sum_{j=1}^{r_n-t_n-1}\mu(\{X>u_n\}\cap\{X_j>u_n\})
+t_n\mu(X>u_n)\\
&\quad+ \sum_{j=0}^{r_n-t_n-1}\Big|\mu(X>u_n)\mu(M(B_{i-1}\cup \n
A_{\ell-1})\leq
u_n)\\
&\qquad\qquad\quad-\mu(\{X>u_n\}\cap\{M((B_{i-1}\cup \n
A_{\ell-1}) -d_j)\leq u_n\})\Big|\\
&\quad+t_n\mu(X>u_n),
\end{align*}
where $d_j=(j+Hn-ir_n)$. Now using condition $D_3(u_n)$, we obtain
\begin{multline*}
\Big|\mu(M(B_i\cup \n A_{\ell-1})\leq
u_n)-(1-r_n\mu(X>u_n))\mu(M(B_{i-1}\cup
\n A_{\ell-1})\leq u_n)\Big|\\
\leq 2(r_n-t_n)\sum_{j=1}^{r_n-t_n-1}\mu(\{X>u_n\}\cap\{X_j>u_n\})
+2t_n\mu(X>u_n)+(r_n-t_n)\gamma(n,t_n).
\end{multline*}
Set
$$\Upsilon_{k,n}:=2(r_n-t_n)\sum_{j=1}^{r_n-t_n-1}
\mu(\{X>u_n\}\cap\{X_j>u_n\})
+2t_n\mu(X>u_n)+(r_n-t_n)\gamma(n,t_n).$$
Recalling \eqref{eq:un}, we may assume that $n$ and $k$ are sufficiently large so that
$\frac{n}{k}\mu(X>u_n)<2$ and
$|1-r_n\mu(X>u_n)|<1$ which implies
\begin{align*}
&\left|\mu(M(B_{S_{p-\ell+1}}\cup \n A_{\ell-1})\leq
u_n)-(1-r_n\mu(X>u_n))\mu(M(B_{S_{p-\ell+1}-1}\cup \n A_{\ell-1})\leq
u_n)\right|\leq\Upsilon_{k,n},
\end{align*}
and
\begin{align*}
&\left|\mu(M(B_{S_{p-\ell+1}}\cup \n A_{\ell-1})\leq
u_n)-(1-r_n\mu(X>u_n))^2\mu(M(B_{S_{p-\ell+1}-2}\cup \n A_{\ell-1})\leq u_n)\right|\\
&\leq \Big|\mu(M(B_{S_{p-\ell+1}}\cup \n A_{\ell-1})\leq
u_n)-(1-r_n\mu(X>u_n))\mu(M(B_{S_{p-\ell+1}-1}\cup \n A_{\ell-1})\leq u_n)\Big|\\
&\quad+\left|1-r_n\mu(X>u_n)\right|\Big|\mu(M(B_{S_{p-\ell+1}-1}\cup
\n A_{\ell-1})\leq u_n)\\
&\hspace{7cm}-(1-r_n\mu(X>u_n))\mu(M(B_{S_{p-\ell+1}-2}\cup \n A_{\ell-1})\Big|\\
&\leq2\Upsilon_{k,n}.
\end{align*}
Inductively, we obtain
\begin{align*}
&\left|\mu(M(B_{S_{p-l+1}}\cup \n A_{\ell-1})\leq
u_n)-(1-r_n\mu(X>u_n))^{S_{p-\ell+1}}\mu(M(\n A_{\ell-1})\leq
u_n)\right|\leq S_{p-\ell+1}\Upsilon_{k,n}.
\end{align*}
Using Lemma \ref{lem:relation-maximums_n},
\begin{align*}
&\left|\mu(M(\n A_{\ell})\leq
u_n)-(1-r_n\mu(X>u_n))^{S_{p-\ell+1}}\mu(M(\n A_{\ell-1})\leq u_n)\right|\\
&\leq \left|\mu(M(\n A_{\ell})\leq u_n)-\mu(M(B_{S_{p-l+1}}\cup \n
A_{\ell-1})\leq
u_n)\right|\\
&\hspace{2cm}+\left|\mu(M(B_{S_{p-l+1}}\cup \n A_{\ell-1})\leq
u_n)-(1-r_n\mu(X>u_n))^{S_{p-\ell+1}}\mu(M(\n A_{\ell-1})\leq u_n)\right|\\
&\leq \left|\mu(M(\n I_{p-\ell+1}\cup \n A_{\ell-1})\leq
u_n)-\mu(M(\cup_{i=i_{\ell}}^{S_{p-\ell+1}}J_i\cup \n
A_{\ell-1})\leq
u_n)\right|+S_{p-l+1}\Upsilon_{k,n}\\
&\leq 2r_n \mu(X>u_n)+S_{p-l+1}\Upsilon_{k,n}.
\end{align*}
In the next step we have
\begin{align*}
&\Big|\mu(M(\n A_{\ell})\leq
u_n)-(1-r_n\mu(X>u_n))^{S_{p-\ell+1}+S_{p-\ell+2}}\mu(M(\n
A_{\ell-2})\leq
u_n)\Big|\\
&\leq \Big|\mu(M(\n A_{\ell})\leq
u_n)-(1-r_n\mu(X>u_n))^{S_{p-\ell+1}}\mu(M(\n A_{\ell-1})\leq u_n)\Big|\\
&\hspace{1cm}+\left|1-r_n\mu(X>u_n)\right|^{S_{p-\ell+1}}\Big|\mu(M(\n
A_{\ell-1})\leq
u_n)\\
&\hspace{7cm}-(1-r_n\mu(X>u_n))^{S_{p-\ell+2}}\mu(M(\n
A_{\ell-2})\leq
u_n)\Big|\\
&\leq 4r_n \mu(X>u_n)+(S_{p-\ell+1}+S_{p-\ell+2})\Upsilon_{k,n}.
\end{align*}
Therefore, by induction, we obtain
\begin{align*}
&\left|\mu(M(\n A_{p})\leq u_n)-(1-r_n\mu(X>u_n))^{\sum_{j=1}^p
S_{j}}\right|\leq 2pr_n \mu(X>u_n)+{\sum_{j=1}^{p}S_{j}}\Upsilon_{k,n}.
\end{align*}
Now, it is easy to see that $S_j\sim k|I_j|$, for each
$j\in\{1,\ldots,p\}$. Consequently,
\begin{align*}
&\lim_{k\rightarrow+\infty}\lim_{n\rightarrow+\infty}
\left(1-r_n\mu(X>u_n)\right)^{{\sum_{j=1}^p}S_{j}}=
\lim_{k\rightarrow+\infty} \lim_{n\rightarrow+\infty}
\left(1-\left\lfloor\frac{n}{k}\right\rfloor\mu(X>u_n)\right)^{{\sum_{j=1}^p}S_{j}}\\
&=\lim_{k\rightarrow+\infty}\left(1-\frac{\tau}{k}\right)
^{{\sum_{j=1}^p}S_{j}}=\lim_{k\rightarrow+\infty}
\left[\left(1-\frac{\tau}{k}\right)^{k\sum_{j=1}^p |I_j|}\right]
^{\frac{{\sum_{j=1}^p}S_{j}}{k\sum_{j=1}^p
|I_j|}}=\e^{-\tau \sum_{j=1}^p |I_j|}\\
&=\prod_{j=1}^p \e^{-\tau(b_j-a_j)}.
\end{align*}

To conclude the proof it suffices to show that
\[
\lim_{k\rightarrow+\infty}\lim_{n\rightarrow+\infty}(2pr_n
\mu(X>u_n)+ kH\Upsilon_{k,n})=0.
\]
We start by noting that, since $n\mu(X>u_n)\to\tau\geq 0$,
\[
\lim_{k\rightarrow+\infty}\lim_{n\rightarrow+\infty}2pr_n
\mu(X>u_n)= \lim_{k\rightarrow+\infty} \frac{2p\tau}{k}=0.
\]
Next we need to check that
\[
\lim_{k\rightarrow+\infty}\lim_{n\rightarrow+\infty}
k\Upsilon_{k,n}=0,
\]
which means,
\begin{multline*}
\lim_{k\rightarrow+\infty}\lim_{n\rightarrow+\infty}
2k(r_n-t_n)\sum_{j=1}^{r_n-t_n-1}\mu(\{X>u_n\}\cap\{X_j>u_n\})
+2kt_n\mu(X>u_n)\\+k(r_n-t_n)\gamma(n,t_n)=0.
\end{multline*}

Assume that $t=t_n$ where $t_n=o(n)$ is given by Condition
$D_3(u_n)$. Now, observe that, by \eqref{eq:un}, for every $k\in\N$,
we have \( \lim_{n\to\infty}kt_n\mu(X>u_n)=0 \). Finally, use
$D_3(u_n)$ and $D'(u_n)$ to prove that the two remaining terms also
go to $0$.
\end{proof}

\begin{proof}[Proof of Theorem~\ref{thm:D3+D'=>Poisson}]
  Since the Poisson Process has no fixed atoms, that is, points $t$
  such that $\mu\left(N(\{t\})>0\right)>0$, the convergence is
  equivalent to convergence of finite dimensional distributions.
  But, because $N$ is a simple point process, without multiple
  events, we may use a criterion proposed by Kallenberg
  \cite[Theorem~4.7]{Ka83} to show the stated convergence. Namely we
  need to verify that
\begin{enumerate}
\item $\E(N_n(I))\xrightarrow[n\to\infty]{}\E(N(I))$, for all $I\in
\mathcal S$;

\item $\mu(N_n(A)=0)\xrightarrow[n\to\infty]{}\mu(N(A)=0)$, for all
$A\in\mathcal R$,
\end{enumerate}
where $\E(\cdot)$ denotes the expectation with respect to $\mu$.

First we show that condition (1) holds. Let $a,b\in\R^+$ be such
that $I=[a,b)$, then, recalling that $v_n=1/\mu(X_0>u_n)$, we have
\begin{align*}
  \E(N_n(I))&=\E\left(\sum_{j=\lfloor v_na\rfloor+1}^{\lfloor v_nb\rfloor}
\I_{\{X_j>u_n\}}\right)=\sum_{j=\lfloor v_na\rfloor+1}^{\lfloor
v_nb\rfloor}E(\I_{\{X_j>u_n\}})\\
&=\left(\lfloor v_nb\rfloor-(\lfloor v_na\rfloor+1)\right)\mu(X_0>u_n)\\
&\sim (b-a)v_n\mu(X_0>u_n)\xrightarrow[n\to\infty]{}(b-a)=\E(N(I)).
\end{align*}

To prove condition (2), let $s\in\N$ and $A=\cup_{i=1}^s I_i$ where
$I_1,\ldots,I_s\in\mathcal S$ are disjoint. Also let
$a_j,b_j\in\R^+$ be such that $I_j=[a_j,b_j)$. By
Proposition~\ref{prop:indep-increments}, we have
\begin{equation*}
  \mu(N_n(A)=0)=\mu\left(\cap_{i=1}^s\{M(v_n I_j)\leq u_n\}\right)\sim
  \mu\left(\cap_{i=1}^s\{M((n/\tau) I_j)\leq u_n\}\right)
  \xrightarrow[n\to\infty]{}\prod_{j=1}^s\e^{-(b_j-a_j)}.
\end{equation*}
The result follows at once since
$\mu(N(A)=0)=\prod_{i=1}^s\mu(N(I_j)=0)=\prod_{j=1}^s\e^{-(b_j-a_j)}$.
\end{proof}

\begin{proof}[Proof of Corollary~\ref{cor:EPP-Poisson-BC}]
In \cite{FF,FF2}, conditions $D_2(u_n)$ and $D'(u_n)$ were proved for
stochastic processes $X_0,X_1,\ldots$ as in \eqref{eq:def-stat-stoch-proc} and
\eqref{eq:observable-form} with $\zeta$ being either the critical point or
the critical value and observables of type $g_3$ ($g_3(x)=x$ for $\zeta=1$
and $g_3=1-ax^2$ for $\zeta=0$).

Observe that independently of the type of
$g$, the sequence $u_n$ is computed so that an exceedance of the level
$u_n$ corresponds to
a visit to the ball $B_{\delta_n}(\zeta)$, where $\delta_n$ is such that
$\mu(B_{\delta_n})\sim\tau/n$. This means that condition $D'(u_n)$ can be
written in terms of returns to $B_{\delta_n}(\zeta)$ which implies that it
holds for every sequence $X_0,X_1,\ldots$, independently of the shape of $g$.

Condition $D_3(u_n)$ follows from decay of correlations. In fact,
from \cite{KN92,Yo92} one has that for all
$\phi,\psi:M\rightarrow\R$ with bounded variation, there is
$C,\alpha>0$ independent of $\phi, \psi$ and $n$ such that
\begin{equation}
\label{eq:decay-correlations} \left| \int\phi\cdot(\psi\circ
f^t)d\mu-\int\phi d\mu\int\psi d\mu\right|\leq
C\mbox{Var}(\phi)\|\psi\|_\infty \e^{-\alpha t},\quad\forall
t\in\N_0,\end{equation} where $\mbox{Var}(\phi)$ denotes the total
variation of $\phi$. For each $A\in\RR$, take
$\phi=\I_{\{X_0>u_n\}}$ and $\psi=\I_{\{M(A)\leq u_n\}}$, then
\eqref{eq:decay-correlations} implies that Condition~$D_3(u_n)$
holds with $\gamma(n,t)=\gamma(t):=2C\e^{-\alpha t}$ and for the
sequence $t_n=\sqrt n$, for example.
\end{proof}

\iffalse
\begin{proof}[Proof of Corollary~\ref{cor:EPP-Poisson-Collet}]
In \cite{Collexval}, conditions $D_2(u_n)$ and $D'(u_n)$ were proved
for stochastic processes $X_0,X_1,\ldots$ as in
\eqref{eq:def-stat-stoch-proc} and \eqref{eq:observable-form}  for
any $\zeta$ in a  full $\mu$-measure set of points and observables
of type $g_1$ ($g_1(x)=-\log x$). The same argument used in the
previous proof would allow us to conclude that condition $D'(u_n)$
holds for every sequence $X_0,X_1,\ldots$, independently of the
shape of $g$. Condition $D_3(u_n)$ also follows from decay of
correlations using almost the same argument as before. The
difference would be that, in this setting, decay of correlations is
available for H\"older continuous functions against $L^\infty$ ones,
instead. This means that we can not use the test function
$\phi=\I_{\{X_0>u_n\}}$, as we did previously. However, using a
suitable H\"older approximations the same result follows. See
\cite[Lemma~3.3]{Collexval}.\end{proof}
\fi
\section{Poisson Statistics for first return times}

\label{sec:Poisson RTS}
\iffalse
\textbf{MT:I'm still a little uneasy about the Poisson Process versus distribution here.  Is the following true?  (I mean, why does the first displayed equation only imply a Poisson distribution, when it looks the same as stuff in the previous sections? }
\fi

The purpose of this section is to discuss what is known about the
Poisson statistics of first return times to balls.  The main focus
is on showing that a map $f\in N\!F^2$ with an acip must have the
RTPP asymptotically converging to a Poisson Process.  However, for
more generality we will introduce the ideas assuming that our phase
space $\X$ is a Riemannian manifold. We note that a similar result
to the main theorem \cite{HLV} implies that the limit laws for the
HTPP and RTPP are the same. So since the results we will cite below
are usually given in terms of RTS, we will use this.

Similarly to the proof of Theorem~\ref{thm:D3+D'=>Poisson}, in order
to show that the RTPP has a Poisson limit, it
suffices to prove that for $k\in \N$ and a rectangle $R_k\subset
\R^k$,
$$\left|\mu_{U_n}\left((w_{U_n}, w_{U_n}^2, \ldots, w_{U_n}^
k)\in
\frac1{\mu(U_n)}R_k\right) - \int_{R_k}\Pi_{i=1}^k \e^{t_i}~dt^k\right|
 \to 0 \text{ as } n\to \infty.$$

The main result of \cite{BSTV} is that the RTS for an inducing
scheme is the same for the inducing scheme as for the original
system. However, they remark in that paper that their methods extend
to give the same Poisson statistics for the inducing scheme and the
original system.  In \cite{BTintstat}, the theory in \cite{BSTV} was
extended to show that for multimodal maps of the interval the RTS of
suitable inducing schemes converge to the RTS of the original
system.  The corresponding result for Poisson statistics follows
similarly.  For multimodal maps $f:I\to I$, with an acip $\mu$,
those inducing schemes are Rychlik maps.  Therefore to prove that
the original $(I,f,\mu)$ has the RTPP converging to a Poisson
process, we must show that the induced, Rychlik, maps also have this
property. As we sketch below, this can be proved using \cite[Theorem
2.6]{HSV}.

For a system $(X,F,\mu)$, we say that a partition ${\mathcal Q}$ is
\emph{uniform mixing} if there exists $\gamma_{\mathcal Q}(n)\to 0$
as $n\to \infty$, such that
$$\gamma_{\mathcal Q}(n):=\sup_{k,l}\sup_{\stackrel{A\in \sigma{\mathcal Q}_k}
{B\in F^{-(n+k)}\sigma{\mathcal Q}_l}}\left|\mu(A\cap B)
-\mu(A)\mu(B)\right|.$$  Here ${\mathcal
Q}_k:=\bigvee_{j=0}^{k-1}F^{-j}{\mathcal Q}$ and $\sigma {\mathcal
Q}_k$ is the sigma algebra generated by ${\mathcal Q}_k$. For our
purposes ${\mathcal Q}$ will be $\{U, U^c\}$ where $U$ is a  ball
around $\zeta$.

By \cite[Theorem 2.6]{HSV}, if we assume the system is uniform mixing for $\{U,U^c\}$, then for a rectangle $R_k\subset \R^k$,
\begin{equation}\left|\mu_U\left((w_U, w_U^2, \ldots, w_U^k)\in
\frac1{\mu(U)}R_k\right) - \int_{R_k}\Pi_{i=1}^k
\e^{t_i}~dt^k\right| \le Err(k, U).\label{eq:RTS pois}\end{equation}
Moreover, the term  $Err(k,U)$ goes to 0 as $U$ shrinks to a point
$\zeta$. In fact, we have $Err(k,U)=k\left(3d(U)+R(k, U)\right)$
where $R(k, U)\to 0$ as $\mu(U) \to 0$ and the rate that $R(k, U)$
goes to zero depends on how $\gamma_{\mathcal Q}$ shrinks with $U$ .
As was shown in \cite{BSTV}, for Rychlik maps the quantity $d(U)$
tends 0 as $U\to \{\zeta\}$.   Therefore it only remains to show
that the Rychlik maps defined in \cite{BTintstat} are uniform mixing
for $\{U, U^c\}$.

Since we assumed that $(X,F,\mu)$ is Rychlik, \cite[Theorem 5]{Rych} implies that the natural partition $\mathcal P_1$, consisting of maximal intervals on which $f$ is a homeomorphism, is Bernoulli, with exponential speed.  Since $(X,F, \mu)$ is uniformly expanding, this implies that $\{U, U^c\}$ is also Bernoulli, with exponential speed. As noted in \cite[Remark 2.5]{HSV}, this implies that $\{U, U^c\}$ is uniform mixing, as required.

The proof that the successive returns form a point process
converging to a Poisson Process follows from \eqref{eq:RTS pois} and
the Kallenberg argument used in the proof of
Theorem~\ref{thm:D3+D'=>Poisson}.

\section{EVL and HTS in higher dimensions}
\label{sec:EVL-HTS-higher-dim}

In this section, we extend Collet's theory of maps with exponential decay of correlations from one dimension to higher dimensions.  We conclude with an example.

Let $\X$ be as usual a $d$-dimensional compact Riemannian manifold
and $f:\X\to\X$ a $C^2$ endomorphism.   We say that $f$ \emph{admits
a Young tower} if there exists a ball  \( \Delta \subset \X \), a
countable partition \( \mathcal P \) (mod 0) of \(
    \Delta \) into topological balls \( \Delta_i \) with smooth boundaries,
    and a return time function \( R: \Delta \to \mathbb N \) piecewise
    constant on elements of  \(\mathcal P \) satisfying the following
    properties:

\renewcommand{\theenumi}{\textbf{Y$_{\arabic{enumi}}$}}

\begin{enumerate}
    \item
    {\em {Markov}:} \label{Y-Markov} for each \( \Delta_i\in\mathcal P \) and
    \( R=R(\Delta_i)
    \),
    \(
    f^{R}: \Delta_i \to \Delta
    \)
    is a \( C^{2} \) diffeomorphism (and in particular a bijection).  Thus the
    induced map
    \[ F: \Delta \to \Delta \ \text{  given by } \
     F(x) = f^{R(x)}(x)
     \] is
    defined almost everywhere and satisfies the classical Markov property.
    We consider also the \emph{separation time} \( s(x,y) \) given by the maximum
    integer such that \( F^{i}(x) \) and \( F^{i}(y) \) belong to the same
    element of the partition \( \mathcal P \) for all \( i\leq s(x,y) \),
    which we assume to be defined and finite for almost every pair of
    points \( x,y\in\Delta \).

    \item {\emph{Uniform backward contraction}:}
    \label{Y-uniform-back-contraction}
    There exist $C>0$ and
    \(0< \beta< 1 \) such that
    for \( x,y\in \Delta \) and any $0\leq n\leq s(x,y)$ we have
    \[ \dist(f^n(x),f^n(y))\leq C\beta^{s(x,y)-n}. \]

    \item {\em {Bounded distortion}:}\label{Y-bounded-distortion}
    For any $x,y\in\Delta$ and any $0\leq k\leq n< s(x,y)$ we have
    \[
   \log \prod_{i=k}^n\frac{\det Df(f^i(x))}
   {\det Df(f^i(x))}\leq C\beta^{s(x,y)-n}
    \]

    \item \label{Y-integrable-return}
    {\em {
    Integrable return times}:}
\[
\int R\; d\l<\infty
\]
\end{enumerate}

In this section we only consider maps admitting a Young tower
\emph{with exponential return time tail} which means that we will
replace condition \eqref{Y-integrable-return} by the following
stronger one
\renewcommand{\theenumi}{\textbf{Y$_{\arabic{enumi}}$'}}
\begin{enumerate}
\setcounter{enumi}{3}
    \item {\em {Exponential tail decay}:} \label{Y-exponential-tail}
    There is $C,\alpha>0$ such that
  \[ \l(\{R>n\})= C\e^{-\alpha n}.  \]
\end{enumerate}

\renewcommand{\theenumi}{\arabic{enumi}}

These systems have been studied, in a more general context, by L.S.
Young in \cite{Yostat,Yorrr}, where several examples can also be
found. Among the properties proved by L.S. Young we mention the
existence of an $F$-invariant measure $\mu_0$ that is equivalent to
Lebesgue measure on $\Delta$ (meaning that its density is bounded
above and below by a constant). After saturating one gets an
absolutely continuous (w.r.t. Lebesgue), $f$-invariant probability
given by
\begin{equation}
\label{eq:def-saturation}
\mu(A)=\bar R^{-1}\sum_{\ell=0}^\infty
\mu_0\left(f^{-\ell}(A)\cap\{R>\ell\}\right),
\end{equation}
where $\bar R=\int_\Delta R d\mu_0$. One of the main achievements in
\cite{Yostat,Yorrr} is the fact that the decay of the tail of return
times determines the speed of decay of correlations for H\"older
continuous (or Lipschitz) observables. Namely, if $\phi:\X\to \R$ is H\"older
continuous of exponent $0<\iota\leq1$, with \emph{H\"older constant}
\[K_\iota(\phi):=\sup_{x\neq
y}\frac{|\phi(x)-\phi(y)|}{\left(\dist(x,y)\right)^\iota}, \]
$\psi:\X\to \R$ is in $L^\infty(\l)$ and the tower has exponential
tail, then there are $C>0$ and $\alpha'>0$ such that
\begin{equation}
\label{eq:decay-correlations-Holder} \left| \int\phi\cdot(\psi\circ
f^t)d\mu-\int\phi d\mu\int\psi d\mu\right|\leq
CK_\iota(\phi)\|\psi\|_\infty \e^{-\alpha t},\quad\forall
t\in\N_0.\end{equation}

\begin{theorem}
  \label{thm:Collet-multi-dimensional}Let $\X$ be a $d$-dimensional
  compact Riemannian manifold and assume that $f:\X\to\X$ is a $C^2$ endomorphism
  admitting a Young tower with exponential tail. Consider a stochastic
  process $X_0, X_1,\ldots$ defined by \eqref{eq:def-stat-stoch-proc} and
  \eqref{eq:observable-form}, for some choice of $\zeta\in\X$. Then, for
  $\l$-almost every
  $\zeta\in\X$ chosen, conditions $D_3(u_n)$ (or $D_2(u_n)$) and
  $D'(u_n)$ hold, where $u_n$ is a sequence of levels satisfying
  \eqref{eq:un}.
\end{theorem}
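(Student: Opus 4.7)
The plan is to follow Collet's one-dimensional argument in \cite{Collexval}, now powered by the exponential decay of correlations \eqref{eq:decay-correlations-Holder} that holds for Young towers with exponential tails. First, observe that by \eqref{eq:observable-form} the exceedance event $\{X_j>u_n\}$ is exactly $\{f^j\in B_{\delta_n}(\zeta)\}$ with $\delta_n:=g^{-1}(u_n)$; since Lebesgue's Differentiation Theorem holds for $\l$-a.e.\ $\zeta$ (where also $\rho(\zeta)<\infty$), the normalisation \eqref{eq:un} forces $\delta_n\sim(\tau/(\kappa\rho(\zeta)n))^{1/d}$. Hence both conditions reduce to statements about indicators of shrinking balls, and the only analytic tool at our disposal is \eqref{eq:decay-correlations-Holder}, applied against Hölder test functions.

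For $D_3(u_n)$, I would replace $\phi_n:=\I_{B_{\delta_n}(\zeta)}$ by $\iota$-Hölder approximants $\tilde\phi_n^{\pm}$, with $\tilde\phi_n^{-}\le\phi_n\le\tilde\phi_n^{+}$, supported in $B_{\delta_n\pm\epsilon_n}(\zeta)$, bounded by $1$ in sup norm, and with Hölder constant $K_\iota(\tilde\phi_n^{\pm})=O(\epsilon_n^{-\iota})$. Taking $\psi=\I_{\{M(A)\le u_n\}}\in L^\infty(\l)$, condition \eqref{eq:decay-correlations-Holder} gives a covariance bound of order $\epsilon_n^{-\iota}\e^{-\alpha t}$, while the approximation error is controlled by $\mu(B_{\delta_n+\epsilon_n}(\zeta)\setminus B_{\delta_n-\epsilon_n}(\zeta))=O(\rho(\zeta)\delta_n^{d-1}\epsilon_n)$. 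Choosing $\epsilon_n=n^{-(1+\eta)/d}$ for small $\eta>0$ makes the approximation error $o(1/n)$, and then $t_n=(\log n)^2$ makes $n\gamma(n,t_n)\to 0$ because the exponential decay beats any polynomial factor from $\epsilon_n^{-\iota}$.

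For $D'(u_n)$, set $B_0:=B_{\delta_n}(\zeta)$ and split
\[
n\sum_{j=1}^{\lfloor n/k\rfloor}\mu(B_0\cap f^{-j}B_0)
=n\!\!\sum_{j=1}^{T_n}\!\mu(B_0\cap f^{-j}B_0)+n\!\!\!\!\sum_{j=T_n+1}^{\lfloor n/k\rfloor}\!\!\!\!\mu(B_0\cap f^{-j}B_0),
\]
with $T_n=(\log n)^2$. The long-range sum is handled by repeating the Hölder approximation trick on both indicators in \eqref{eq:decay-correlations-Holder}: each term is $\mu(B_0)^2+O(\epsilon_n^{-\iota}\e^{-\alpha j})$, summing gives a contribution of the form $O(\tau^2/k)+o(1)$, which vanishes on sending first $n\to\infty$ and then $k\to\infty$.

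The genuine obstacle is the short-range sum $n\sum_{j=1}^{T_n}\mu(B_0\cap f^{-j}B_0)$, for which decay of correlations is useless. The strategy is to exploit the tower geometry to control the Lebesgue measure of preimages of balls, and then exclude a $\l$-null set of $\zeta$ by a Borel--Cantelli argument. Concretely, the uniform backward contraction \eqref{Y-uniform-back-contraction} and bounded distortion \eqref{Y-bounded-distortion} combined with exponential tails \eqref{Y-exponential-tail} yield, after saturating via \eqref{eq:def-saturation}, a bound of the form $\l(f^{-j}B_0)\le C|B_0|$ uniformly in $j$ and good distortion on each connected branch. Integrating over $\zeta$, Fubini gives
\[
\int_\X\sum_{j=1}^{T_n}\mu\bigl(B_{\delta_n}(\zeta)\cap f^{-j}B_{\delta_n}(\zeta)\bigr)\,d\l(\zeta)\le C\,T_n\,\delta_n^{2d},
\]
so along a subsequence $n_\ell$ for which $T_{n_\ell}\delta_{n_\ell}^{d}\to 0$ summably, Borel--Cantelli eliminates a null set of $\zeta$ outside which the short-range contribution is $o(1/n)$ (a standard monotonicity argument then upgrades to the full sequence). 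This step, which quantifies the non-recurrence of a typical $\zeta$ to itself on polylogarithmic time scales, is the main technical point and is the higher-dimensional analogue of \cite[Section~4]{Collexval}; once it is in place, $D'(u_n)$ follows for $\l$-a.e.\ $\zeta$, completing the proof.
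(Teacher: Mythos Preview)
Your handling of $D_3(u_n)$ and of the long-range piece of $D'(u_n)$ matches the paper's approach (cf.\ Lemma~\ref{lem:decay-correl-Dun} and the estimate of $S_2$ in the proof); the only difference is that the paper controls the $\mu$-measure of the approximation annulus via the global bound $\mu(A)\le C\l(A)^\theta$ of Lemma~\ref{lem:collet-multidim-relation-mu-Leb} rather than via the pointwise density $\rho(\zeta)$, which is safer but leads to the same conclusion.

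The short-range argument, however, has a genuine gap. Carrying out your Fubini step explicitly,
\[
\int_\X\mu\bigl(B_{\delta_n}(\zeta)\cap f^{-j}B_{\delta_n}(\zeta)\bigr)\,d\l(\zeta)
=\int_\X\l\bigl(B_{\delta_n}(x)\cap B_{\delta_n}(f^jx)\bigr)\,d\mu(x)
\le C\,\delta_n^{\,d}\,\mu\bigl(\mathcal E_j(2\delta_n)\bigr),
\]
with $\mathcal E_j(\epsilon)=\{x:|x-f^jx|<\epsilon\}$, so your displayed inequality is equivalent to $\mu(\mathcal E_j(\epsilon))\le C\epsilon^d$ uniformly in $j$. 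None of the tower properties you invoke yield this: the bound $\l(f^{-j}B_0)\le C|B_0|$ controls the \emph{total} mass of $f^{-j}B_0$, not its overlap with $B_0$. In fact the substantial work of this section is Proposition~\ref{prop:main-estimate-collet-multidim}, which after a careful tower decomposition gives only the weaker $\mu(\mathcal E_j(\epsilon))\le C(j^2\epsilon^\eta+\e^{-\alpha'j})$. Feeding this into the Fubini identity, the term $\sum_{j\ge 1}\e^{-\alpha'j}=O(1)$ makes $n\int_\X\sum_{j=1}^{T_n}\mu(\cdots)\,d\l$ of order one, so no first-moment Borel--Cantelli can conclude.

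The paper instead upgrades the average control to a pointwise one via the Hardy--Littlewood maximal inequality (Lemma~\ref{lem:bad-set}): from $\mu(E_v)\le C v^{-\beta'}$ one defines $F_v=\{\zeta:\mu(B_{v^{-\psi}}(\zeta)\cap E_{v^\psi})\ge\kappa v^{-(d+\rho)\psi}\}$ and obtains $\sum_v\l(F_v)<\infty$; for $\zeta$ eventually outside $F_v$ one gets directly $\mu(\{X_0>u_n\}\cap\{X_j>u_n\})=O(n^{-(d+\rho)/d})$ for each $1\le j\le T_n$, whence $S_1(t,n)\to 0$. The maximal-function step is not a cosmetic alternative to Borel--Cantelli: it is precisely what turns smallness of $\mu(E_v)$ into smallness of $\mu(B\cap E_v)$ for \emph{most centres} $\zeta$, even though the $\l$-average of the short-range sum over $\zeta$ stays bounded away from zero.
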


Together with the results in Section~\ref{subsec:main results},
Section~\ref{sec:link-HTPP-EPP} and Section~\ref{sec:PoissonEVL} we
get the following corollary.

\begin{corollary}
  \label{cor:HTS-Collet-higher-dim}
Let $\X$ be a $d$-dimensional
  compact Riemannian manifold and assume that $f:\X\to\X$ is a $C^2$ endomorphism
  admitting a Young tower with exponential tail. Consider a stochastic
  process $X_0, X_1,\ldots$ defined by \eqref{eq:def-stat-stoch-proc} and
  \eqref{eq:observable-form} for some $\zeta\in\X$.
  Then, for $\l$-almost every choice of $\zeta\in\X$,
  the following assertions hold:
\begin{enumerate}
     \item We have an EVL for $M_n$, defined in
     \eqref{eq:def-max}, which coincides with that one of $\hat M_n$
     defined in \eqref{eq:def-max-iid}. In particular, it must be of
     one of the three classical types. Moreover,
for every $i\in\{1,2,3\}$, if $g$ is of type $g_i$ then we have an
EVL for $M_n$ of type $\ev_i$.

    \item We have
  exponential HTS to balls at $\zeta\in\X$.

  \item The EPP $N_n$ defined
  in \eqref{eq:def-EPP} is such that
  $N_n\xrightarrow[]{d}N$, as $n\rightarrow\infty$, where $N$
  denotes a Poisson Process with intensity $1$.

    \item The same applies to the HTPP $N_n^*$ defined in
    \eqref{eq:def-HTPP}.

\end{enumerate}

\end{corollary}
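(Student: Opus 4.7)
The plan is to assemble Corollary~\ref{cor:HTS-Collet-higher-dim} from Theorem~\ref{thm:Collet-multi-dimensional} together with the abstract machinery of Sections~\ref{sec:proofs HTS EVL}, \ref{sec:link-HTPP-EPP} and \ref{sec:PoissonEVL}. All four assertions follow once we know that, for $\l$-a.e.\ $\zeta\in\X$, the process $X_0,X_1,\ldots$ attached to $\zeta$ by \eqref{eq:def-stat-stoch-proc} and \eqref{eq:observable-form} satisfies $D_2(u_n)$, $D_3(u_n)$ and $D'(u_n)$ along a sequence of levels with $n\mu(X_0>u_n)\to\tau>0$; this is exactly the content of Theorem~\ref{thm:Collet-multi-dimensional}. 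Since $\mu$ is absolutely continuous with respect to $\l$, Lebesgue's Differentiation Theorem holds at $\l$-a.e.\ $\zeta$, so after intersecting with the full $\l$-measure set given by Theorem~\ref{thm:Collet-multi-dimensional} we obtain a single full $\l$-measure set on which all hypotheses of the earlier abstract theorems are simultaneously available.

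For assertion (1), I would combine $D_2(u_n)$ and $D'(u_n)$ with \cite[Theorem~1]{FF2}, \ie \eqref{eq:rel-hatMn-Mn}, to conclude that $\mu(M_n\leq u_n)$ and $\mu(\hat M_n\leq u_n)$ share the same limit whenever one exists. To identify that limit, I would reuse the computation carried out in the proof of Theorem~\ref{thm:HTS-implies-EVL}: for each type $g_i$, the normalising sequence $u_n=u_n(y)$ converts the tail into $n\mu(X_0>u_n)\to\tau_i(y)$ via Lebesgue differentiation at $\zeta$ (turning $\mu(B_\delta(\zeta))$ into $\rho(\zeta)\kappa\delta^d$), so Remark~\ref{rem:attraction-domains} and the classical EVT classification force the limit of $\mu(\hat M_n\leq u_n)$ to be $\ev_i$. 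Transporting this to $M_n$ through \eqref{eq:rel-hatMn-Mn} completes (1).

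Assertions (2), (3) and (4) are now immediate corollaries of the abstract theory. For (2), apply the second bullet of Theorem~\ref{thm:EVL=>HTS} (or, equivalently, Corollary~\ref{cor:D+D'=>HTS}) to the EVL from (1) to obtain exponential HTS to balls at $\zeta$. For (3), feed $D_3(u_n)$ and $D'(u_n)$ from Theorem~\ref{thm:Collet-multi-dimensional} directly into Theorem~\ref{thm:D3+D'=>Poisson} to get $N_n\xrightarrow{d}N$, where $N$ is a Poisson process of intensity $1$. For (4), apply Theorem~\ref{thm:EPP=>HTPP} to transport this Poisson limit from the EPP to the HTPP $N_n^*$.

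The main obstacle does not lie in the corollary itself, which is an assembly argument, but in the input Theorem~\ref{thm:Collet-multi-dimensional}: namely, in verifying $D_3(u_n)$ and $D'(u_n)$ for $\l$-a.e.\ $\zeta$. The long-range decoupling $D_3(u_n)$ should follow from the exponential decay of correlations \eqref{eq:decay-correlations-Holder}, once the indicator $\I_{\{X_0>u_n\}}=\I_{B_{g^{-1}(u_n)}(\zeta)}$ is approximated by a H\"older function (since in the tower setting decay is against H\"older observables rather than the bounded variation class available in dimension one); a small-annulus estimate, controlled by absolute continuity of $\mu$, then absorbs the approximation error. The genuinely delicate step is $D'(u_n)$, which requires ruling out concentration of short returns to $B_{g^{-1}(u_n)}(\zeta)$; this will be handled by combining the tower structure with a Borel--Cantelli type argument on small balls to exclude a $\l$-null set of bad centres $\zeta$.
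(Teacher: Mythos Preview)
Your proposal is correct and matches the paper's own approach: the paper does not spell out a proof, but simply states that the corollary follows from Theorem~\ref{thm:Collet-multi-dimensional} ``together with the results in Section~\ref{subsec:main results}, Section~\ref{sec:link-HTPP-EPP} and Section~\ref{sec:PoissonEVL}'', which is precisely the assembly you describe---\eqref{eq:rel-hatMn-Mn} for (1), Corollary~\ref{cor:D+D'=>HTS} for (2), Theorem~\ref{thm:D3+D'=>Poisson} for (3), and Theorem~\ref{thm:EPP=>HTPP} for (4). Your final paragraph on the obstacles in verifying $D_3(u_n)$ and $D'(u_n)$ is not part of the corollary at all but rather a sketch of the proof of Theorem~\ref{thm:Collet-multi-dimensional}, and indeed it accurately anticipates what the paper does there (H\"older approximation of the indicator via Lemma~\ref{lem:decay-correl-Dun}, and a Borel--Cantelli argument on the bad-centre sets $F_\v$ via Lemma~\ref{lem:bad-set}).
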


\subsection{Proof of Theorem~\ref{thm:Collet-multi-dimensional}}
\label{subsec:proof-thm-collet-multi-dim} To show this result, one
needs only to realise that Collet's proof of
\cite[Theorem~1]{Collexval} may be mimicked in our multi-dimensional
setting with minor adjustments. Thus, instead of repeating all the
arguments, we will prove that $D_3(u_n)$ and $D'(u_n)$ hold just by
redoing the parts that need to be adapted to this more general
higher dimensional setting. However, in order to keep on track we
will restate all the Lemmas (with the necessary adjustments) of
Collet's proof.

The first lemma is technical and very simple to prove.

\begin{lemma}
For any $\v \geq 1$
$$
\sum_{l,\,R_{l}>\v }R_{l}\;\l(\Delta_{l})
\le\begin{cases} 2\sum_{s=\v /2}^{\infty}\;\l(\{R>s\})\;,&\\
\sum_{s=\v }^{\infty}\;\l(\{R>s\})+\v \,\l(\{R>\v \})\;.&
\end{cases}
$$
\end{lemma}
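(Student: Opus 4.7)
The identity to exploit is that $R$ is constant equal to $R_l$ on each partition element $\Delta_l$, so the left-hand sum equals $\int_{\{R>\v\}} R\,d\l$. The plan is to express this integral via the layer-cake (distribution function) identity and then do elementary bookkeeping.

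First, I would write $R_l = \sum_{s=0}^{\infty}\I_{\{s<R_l\}}$ and swap the order of summation:
\begin{align*}
\sum_{l,\,R_l>\v} R_l\,\l(\Delta_l)
&= \sum_{l,\,R_l>\v}\sum_{s=0}^{\infty}\l(\Delta_l)\,\I_{\{s<R_l\}}
 = \sum_{s=0}^{\infty}\l\left(\bigcup_{l:\,R_l>\max(\v,s)}\Delta_l\right)\\
&= \sum_{s=0}^{\v-1}\l(\{R>\v\}) \;+\; \sum_{s=\v}^{\infty}\l(\{R>s\})
 = \v\,\l(\{R>\v\}) + \sum_{s=\v}^{\infty}\l(\{R>s\}).
\end{align*}
This already gives the second bound of the lemma (in fact as an equality), so that half is immediate.

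For the first bound I use that $s\mapsto \l(\{R>s\})$ is nonincreasing. Thus for every $s$ with $\lceil \v/2\rceil\le s\le \v-1$ one has $\l(\{R>s\})\ge \l(\{R>\v\})$, which gives
\[
2\sum_{s=\lceil \v/2\rceil}^{\v-1}\l(\{R>s\}) \;\ge\; 2\lfloor \v/2\rfloor\,\l(\{R>\v\}),
\]
and since $2\lfloor \v/2\rfloor + 1 \ge \v$, adding the $s=\v$ term of the series on the right yields
\[
2\sum_{s=\lceil \v/2\rceil}^{\v-1}\l(\{R>s\}) + \l(\{R>\v\}) \;\ge\; \v\,\l(\{R>\v\}).
\]
Plugging this into the equality above gives $\sum_{l,\,R_l>\v} R_l\,\l(\Delta_l)\le 2\sum_{s=\lceil \v/2\rceil}^{\infty}\l(\{R>s\})$, which is the first bound (up to writing $\v/2$ in place of $\lceil \v/2\rceil$, harmless since extra terms only enlarge the sum).

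There is no real obstacle here — the only mildly delicate point is the off-by-one accounting needed to absorb the factor $\v$ from $\v\,\l(\{R>\v\})$ into the truncated tail starting at $\v/2$, which is handled by the inequality $2\lfloor \v/2\rfloor+1\ge \v$. Both bounds follow from the single Fubini-style identity together with the monotonicity of the survival function of $R$.
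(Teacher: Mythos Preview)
Your argument is correct. The layer-cake identity
\[
\sum_{l,\,R_l>v} R_l\,\l(\Delta_l)=v\,\l(\{R>v\})+\sum_{s=v}^{\infty}\l(\{R>s\})
\]
already gives the second bound with equality, and your monotonicity argument correctly absorbs the term $v\,\l(\{R>v\})$ into twice the tail starting at $\lceil v/2\rceil$; the final step (which you leave implicit) is just $\l(\{R>v\})+\sum_{s\ge v}\l(\{R>s\})\le 2\sum_{s\ge v}\l(\{R>s\})$.

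As for comparison: the paper does not actually prove this lemma, it simply refers to \cite[Lemma~2.1]{Collexval}. Your Fubini/layer-cake argument is exactly the standard way to prove such tail estimates and is presumably what Collet does as well, so there is no substantive difference to report.
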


See \cite[Lemma~2.1]{Collexval}.

Next result gives a relation between the measure $\mu$ of small sets
and their respective Lebesgue measure.

\begin{lemma}
\label{lem:collet-multidim-relation-mu-Leb} Under
\eqref{Y-exponential-tail}, there are two positive constants $C$ and
$\theta$ such that for any Lebesgue measurable set $A$,  we have
$$
\mu(A)\le C\l(A)^{\theta}.
$$
\end{lemma}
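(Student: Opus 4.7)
The plan is to use the saturation formula \eqref{eq:def-saturation} and split the sum over $\ell$ at a threshold $N$ to be optimized at the end. Since the density of $\mu_0$ with respect to $\l$ is bounded on $\Delta$, there exists $c_0>0$ with $\mu_0(E)\leq c_0\,\l(E)$ for all measurable $E\subset \Delta$. Hence
$$\mu(A)\leq \frac{c_0}{\bar R}\sum_{\ell=0}^{\infty}\l\bigl(f^{-\ell}(A)\cap\{R>\ell\}\bigr),$$
and it suffices to bound this sum by $C\l(A)^{\theta}$.

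For the tail $\ell>N$, the trivial inclusion $f^{-\ell}(A)\cap\{R>\ell\}\subset\{R>\ell\}$ together with the exponential decay \eqref{Y-exponential-tail} yields $\sum_{\ell>N}\l\bigl(f^{-\ell}(A)\cap\{R>\ell\}\bigr)\leq C_1\e^{-\alpha N}$. This part is essentially immediate.

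For the head $\ell\leq N$, I would decompose $\{R>\ell\}=\bigsqcup_{i:\,R_i>\ell}(\Delta_i\cap\{R_i>\ell\})$. Since $\ell<R_i$, the Markov property \eqref{Y-Markov} forces $f^\ell|_{\Delta_i}$ to be a $C^2$ diffeomorphism onto its image $f^\ell(\Delta_i)\subset\X$, with bounded distortion inherited from \eqref{Y-bounded-distortion}. A standard change of variables then gives
$$\l\bigl(f^{-\ell}(A)\cap\Delta_i\bigr)\leq c_1\,\frac{\l(\Delta_i)}{\l(f^\ell(\Delta_i))}\,\l\bigl(A\cap f^\ell(\Delta_i)\bigr).$$
Summing over $i$ with $R_i>\ell$ and exploiting the essential disjointness of the tower floors $\{f^\ell(\Delta_i):R_i>\ell\}$ in $\X$ (a structural feature of the Young tower construction), together with an estimate of the form $\l(\Delta_i)/\l(f^\ell(\Delta_i))\leq K^\ell$ for some fixed $K>0$ (reflecting how much $f^{-1}$ can inflate Lebesgue measure), one obtains $\l\bigl(f^{-\ell}(A)\cap\{R>\ell\}\bigr)\leq c_2K^\ell\l(A)$, whence $\sum_{\ell\leq N}\leq c_3K^N\l(A)$. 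Combining head and tail, $\mu(A)\leq C\bigl(K^N\l(A)+\e^{-\alpha N}\bigr)$, and choosing $N\sim \frac{\log(1/\l(A))}{\alpha+\log K}$ balances the two terms and delivers $\mu(A)\leq C\l(A)^{\theta}$ with $\theta=\alpha/(\alpha+\log K)\in(0,1]$.

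The main obstacle is justifying the bound $\l(\Delta_i)/\l(f^\ell(\Delta_i))\leq K^\ell$ uniformly in $i$ and $\ell\leq R_i$. The original map $f$ may have critical points and need not be globally expanding, so one cannot simply invoke a pointwise Jacobian lower bound. The way out should be to use the tower's intrinsic geometry: the uniform backward contraction \eqref{Y-uniform-back-contraction} forces $\operatorname{diam}(\Delta_i)\leq C\beta^{R_i}$, bounded distortion \eqref{Y-bounded-distortion} on returns controls the Jacobian ratio $\l(f^\ell(\Delta_i))/\l(\Delta_i)$ by a single point-value of $|\det Df^\ell|$, and the essential disjointness of floors prevents the sum over $i$ from blowing up. A careful bookkeeping of these three ingredients, together with the (harmless) upper bound $|\det Df|\leq \|Df\|_\infty^{d}$ from compactness, should give the required exponential growth with $\ell$.
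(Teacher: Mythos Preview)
The paper does not give a proof of this lemma; it simply refers the reader to Collet's Lemma~2.2. Your overall scheme---use the saturation formula \eqref{eq:def-saturation}, split the sum at a threshold $N$, bound the tail $\ell>N$ by the exponential decay of $\l(\{R>\ell\})$, bound the head $\ell\le N$ via a change of variables/Jacobian estimate, and then optimise $N$---is exactly the strategy behind that result, and your identification of the head bound as the only nontrivial step is correct.

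The genuine gap is your appeal to ``essential disjointness of the tower floors $\{f^{\ell}(\Delta_{i}):R_{i}>\ell\}$ in $\X$''. This is not a structural feature of Young towers: the levels $\Delta_{i,\ell}$ are disjoint in the \emph{abstract} tower, but their projections $\pi(\Delta_{i,\ell})=f^{\ell}(\Delta_{i})$ to $\X$ typically overlap heavily (already for the doubling map, or any expanding endomorphism of degree $\ge 2$). So one cannot pass from $\sum_{i}\l(A\cap f^{\ell}(\Delta_{i}))$ to $\l(A)$ by disjointness. What the head bound really requires is an $L^{\infty}$ estimate on the transfer operator: writing $h_{\ell}(y)=\sum_{x\in f^{-\ell}(y)\cap\{R>\ell\}}|\det Df^{\ell}(x)|^{-1}$, one has $\l(f^{-\ell}(A)\cap\{R>\ell\})=\int_{A}h_{\ell}\,d\l\le\|h_{\ell}\|_{\infty}\,\l(A)$, and one needs $\|h_{\ell}\|_{\infty}\le K^{\ell}$ for some $K$. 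In the paper's intended setting (Section~\ref{subsec:example}: $f$ a local diffeomorphism on a compact manifold), this is immediate from $|\det Df|\ge\sigma_{0}>0$ and finite degree $D$, giving $K=D\sigma_{0}^{-1}$, and your optimisation then goes through verbatim. When $f$ has genuine critical points (as in Collet's one-dimensional maps), this pointwise Jacobian lower bound fails and the control of $h_{\ell}$ requires the finer geometry of the tower; for that case one really does need to consult Collet's argument rather than the sketch you give.
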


See \cite[Lemma~2.2]{Collexval}.

To prove condition $D'(u_n)$ we need to show that the $\mu$-measure
of the set of points $x$ that are too rapidly recurrent is small.
For every $\v \in\N$ and any $\epsilon>0$ we define the set $\eveps
$ of points that come back very close to the initial position after
$\v $ iterates
$$
\eveps =\{x\,,\,|x-f^{\v }(x)|<\epsilon\}\;.
$$ Since  $\X$ is compact and $f$
is $C^2$ we may define
$\Upsilon:=\sup\{\|Df(x)\|_\infty\:\;x\in\X\}$.

\begin{propositionP}\label{prop:main-estimate-collet-multidim}
Under \eqref{Y-exponential-tail}, there exist positive constants
$C$, $\alpha'$ and $\eta<1$ such that for any integer $\v $ and any
$\epsilon>0$ we have
$$
\mu(\eveps )\le C\left(\v ^{2}\epsilon^{\eta}+e^{-\alpha' \v
}\right).
$$
\end{propositionP}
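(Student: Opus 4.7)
My plan is to adapt the proof of the analogous one-dimensional statement in \cite[Proposition~2.3]{Collexval} to the higher-dimensional Young-tower setting. The guiding picture is that a point $x\in\eveps$ either has an orbit trapped in deep layers of the Young tower during its first $\v$ iterates (exponentially rare by \eqref{Y-exponential-tail}), or else the induced map $F$ performs many excursions through the base $\Delta$, in which case the backward contraction \eqref{Y-uniform-back-contraction} and bounded distortion \eqref{Y-bounded-distortion} make it very unlikely that $f^\v(x)$ lands $\epsilon$-close to $x$. The final passage from Lebesgue to $\mu$ is carried out via Lemma~\ref{lem:collet-multidim-relation-mu-Leb}.

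Concretely, using the saturation formula \eqref{eq:def-saturation} and the fact that $\mu_0$ is equivalent to Lebesgue on $\Delta$, I would bound
\[
\mu(\eveps)\;\leq\; C\sum_{\ell=0}^{\infty}\l\bigl(\{y\in\Delta\,:\,R(y)>\ell,\ f^\ell(y)\in\eveps\}\bigr),
\]
and split the sum at $\ell=K\v$ for a large constant $K$. The tail $\ell>K\v$ contributes at most $\sum_{\ell>K\v}\l(\{R>\ell\})\leq Ce^{-\alpha K\v}$ by \eqref{Y-exponential-tail}. For $\ell\leq K\v$, I would further split according to the number $N$ of visits that the finite orbit $f^\ell(y),\ldots,f^{\ell+\v}(y)$ makes to the base $\Delta$: when $N\leq \v/M$ the consecutive induced return times sum to at least $\v$, so at least one exceeds $M$, which by \eqref{Y-exponential-tail} is exponentially rare in Lebesgue; when $N\geq \v/M$, I would use the Markov property \eqref{Y-Markov} to decompose into $F^N$-cylinders, on each of which \eqref{Y-uniform-back-contraction} and \eqref{Y-bounded-distortion} ensure that the set $\{|f^\ell(y)-f^{\ell+\v}(y)|<\epsilon\}$ pulls back, under a map with uniform distortion and exponential expansion, to a preimage of a ball of radius $\epsilon$, and so occupies a fraction at most $C\epsilon^d$ of the cylinder.

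Summing over cylinders and over $\ell\leq K\v$ yields a Lebesgue bound of the form $\tilde C\,\v\,\epsilon^d+Ce^{-\alpha'\v}$, which Lemma~\ref{lem:collet-multidim-relation-mu-Leb} upgrades to a $\mu$-bound of the form $C(\v\epsilon^d)^{\theta}+Ce^{-\alpha'\theta\v}$. Setting $\eta:=d\theta<1$ and absorbing the resulting $\v^{\theta}$ into $\v^2$ yields the claimed estimate. The main obstacle is the multidimensional combinatorial bookkeeping: in one dimension the branches of $F^N$ are explicit intervals and can be enumerated directly, but in $d$ dimensions one must use \eqref{Y-bounded-distortion} carefully to show that the number of $F^N$-cylinders is dominated by the expansion gained, so that summing the per-cylinder estimate $C\epsilon^d$ does not blow up, and the bound $\Upsilon=\sup\|Df\|_\infty$ must be invoked to control how much the defining condition of $\eveps$ can distort under the iterates of $f$ that carry the orbit from one tower level to the next return to $\Delta$.
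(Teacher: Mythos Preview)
Your overall strategy matches the paper's (which in turn follows \cite[Proposition~2.3]{Collexval}), but the heart of the argument is mis-stated, and the bookkeeping at the end is muddled.

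The crucial step is not that $\eveps\cap A$ is ``a preimage of a ball of radius $\epsilon$'' --- it is no such thing, since the defining condition $|x-f^\v(x)|<\epsilon$ involves both $x$ and $f^\v(x)$. What the paper actually proves is that on a good cylinder $A$ (one on which $f^\v$ is at least $2$-to-$1$ expanding in the metric sense, which follows from \eqref{Y-uniform-back-contraction} together with the choice of a parameter $\xi$ with $\beta\Upsilon^\xi<1$ and the restriction $R_{s_r}<\xi\v$ on the last return time), the \emph{image} $f^\v(\eveps\cap A)$ has diameter $O(\epsilon)$. This comes from a triangle-inequality trick: for $x,y\in\eveps\cap A$,
\[
\dist(f^\v(x),f^\v(y))\leq \dist(f^\v(x),x)+\dist(x,y)+\dist(y,f^\v(y))\leq 2\epsilon+\tfrac12\,\dist(f^\v(x),f^\v(y)),
\]
so $\mbox{diam}(f^\v(\eveps\cap A))\leq 4\epsilon$ and hence $|f^\v(\eveps\cap A)|=O(\epsilon^d)$. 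Bounded distortion then gives $|\eveps\cap A|/|A|=O(\epsilon^d/|f^\v(A)|)$, not $O(\epsilon^d)$ outright. Your sketch never articulates this step, and ``preimage of a ball'' suggests the wrong picture.

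This leads to the second gap: the denominator $|f^\v(A)|$ is not uniformly bounded below, because $\v$ is not an induced return time and the overshoot $J-\v$ can be as large as $R_{s_r}$. The paper handles this by introducing an auxiliary scale $\delta$, treating cylinders with $|f^\v(A)|\geq\delta$ as above (fraction $O(\epsilon^d/\delta)$), and showing that cylinders with small image or with $R_{s_r}\geq\xi\v$ are covered by tail events of the return time and so contribute $O(\v^2\delta^\gamma)+O(e^{-c\v})$ via \eqref{Y-exponential-tail} and Lemma~\ref{lem:collet-multidim-relation-mu-Leb}. The final bound $O(\epsilon^d/\delta+\v^2\delta^\gamma+e^{-c\v})$ is then optimised in $\delta$. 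Your direct claim of $\tilde C\v\epsilon^d$ skips this, and the subsequent invocation of Lemma~\ref{lem:collet-multidim-relation-mu-Leb} to ``upgrade a Lebesgue bound to a $\mu$-bound'' is misplaced: you already bounded $\mu(\eveps)$ by the saturation sum at the outset, so there is nothing to upgrade --- that lemma enters only inside the argument to control the $\mu$-measure of the bad cylinders from their Lebesgue tails.
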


\begin{proof} We follow the proof of the corresponding result
\cite[Proposition~2.3]{Collexval} very closely.
 We will  consider the intersection with  $\eveps $ of the various
 cylinders where $f^{\v }$ is one-to-one.
From \eqref{eq:def-saturation}, we have to consider the intersection
of these sets with $f^{j}(\Delta_{l})$. We will start by choosing a
number $1/2>\xi>0$ such that $\beta \Upsilon^{\xi}<1$ and assume
first that $R_{l}<\xi \v $. If we apply $f^{R_{l}-j}$ on
$f^j(\Delta_l)$, we land in $\Delta$ and we have to apply $f^{\v
-R_{l}+j}$ to get the image under $f^{\v}$. At this point it is
convenient to introduce the following construction. Let $(s_{j})$ be
a sequence of integers. We denote by $\Delta_{s_{1},
s_{2},\ldots,s_{r}}$ the set
$$
\Delta_{s_{1}, s_{2},\ldots,s_{r}}=\Delta_{s_{1}}\cap
f^{-R_{s_{1}}}\Delta_{s_{2}}\cap
f^{-(R_{s_{1}}+R_{s_{2}})}\Delta_{s_{3}} \cap\;\cdots\;\cap
f^{-(R_{s_{1}}+\,\cdots\,+R_{s_{r-1}})}\Delta_{s_{r}}\;.
$$

In other words, this is the subset $A$ of $\Delta_{s_{1}}$ which is
mapped by $f^{R_{s_{1}}+\cdots+R_{s_{r-1}}}$ bijectively onto
$\Delta_{s_{r}}$ with
$$
f^{R_{s_{1}}+\cdots+R_{s_{p}}}(A)\subset \Delta_{s_{p+1}}
$$
for $p=1,\ldots,\;r-1$.

For  fixed $\v $, $l$ and $j$,   we now consider all the sets
$\Delta_{s_{1},\ldots,s_{r}}$ with
$R_{s_{1}}+\cdots+R_{s_{r-1}}+R_{l}-j<\v $ and
$R_{s_{1}}+\cdots+R_{s_{r}}+R_{l}-j\ge \v $. Together with
$\{R>\v -1-R_{l}+j\}$, this gives  a partition of $\Delta$.
 We then construct
a partition of $f^{j}(\Delta_{l})$ by pulling back this partition by
$f^{R_{l}-j}$. We now consider $f^{\v }$ on each atom of this
partition. Let
$$
A=A_{l,j,s_{1},\ldots,s_{r}}=f^{j}(\Delta_{l})\cap
f^{j-R_{l}}\left(\Delta_{s_{1},\ldots,s_{r}}\right).
$$

We first assume that $R_{s_{r}}<\xi \v $ and $A$ has a `large' image under
$f^{\v }$, namely
$$
|f^{\v }(A)|\ge\delta\;,
$$
where $\delta$ is a positive number to be chosen adequately later
on. Let $J:=R_{s_{1}}+\cdots+R_{s_{r}}+R_{l}-j$ and $B:={\eveps
}\cap A$, which we may assume to be nonempty. We argue that $|f^{\v
}(B)|= O(\epsilon^d)$. To see this, let $x,y\in A$ be such that
$$\dist(f^\v (x),f^\v (y))=\mbox{diam}(f^\v (B)):=\sup\{\dist(z,w):\;
z,w\in f^\v (B)\}.$$

By \eqref{Y-uniform-back-contraction}, it follows that
\begin{equation*}
  \dist(x,y)\leq C \beta^{J} \dist(f^J(x),f^J(y)).
\end{equation*}
Moreover, by definition of $\Upsilon$, we also have
\begin{equation*}
\dist(f^\v (x),f^\v (y))\geq \Upsilon^{-J+\v }
\dist(f^J(x),f^J(x))\geq \Upsilon^{-R_{s_r}} \dist(f^J(x),f^J(x)).
\end{equation*}
Consequently, since by assumption $R_{s_{r}}<\xi \v $, we have
\begin{equation*}
\frac{\dist(x,y)}{\dist(f^\v (x),f^\v (y))}\leq C\beta
^{J}\Upsilon^{R_{s_r}}\leq \O(\beta ^{\v }\Upsilon^{\xi \v })\leq
\O(\left(\beta \Upsilon^{\xi}\right)^{\v }).
\end{equation*}
Since $\beta \Upsilon^{\xi}<1$, by the choice of $\xi$, then we may
pick $\v _0\in\N$, only depending on $f$, such that for all $\v \geq
\v _0$ we have
\begin{equation}
\label{eq:auxliary-computation}
 \dist(x,y)\leq
\frac12\,\dist(f^\v (x),f^\v (y)).
\end{equation}
Observe that for $\v <\v _0$,
Proposition~\ref{prop:main-estimate-collet-multidim} simply holds
with $C$ sufficiently large.

Now, assuming that $x,y\in B\subset \eveps $ we have
\begin{align*}
  \dist(f^\v (x),f^\v (y))&\leq
  \dist(f^\v (x),x)+\dist(x,y)+\dist(y,f^\v (y))\\
  &\leq \epsilon+ \frac12\,\dist(f^\v (x),f^\v (y))+\epsilon.
\end{align*}
This means that $\mbox{diam}(f^\v (B))\leq 4\epsilon$. If $x,y\notin
B$ then we could replace them by close enough $x',y'\in B$ so that
$\mbox{diam}(f^\v (B))\leq 5\epsilon$. Hence, we have proved that
$\left|f^\v (B)\right|=O(\epsilon^d)$.

Using distortion, we get
$$
|B|/|A|=O(\epsilon^d/\delta),
$$
and
$$
|\Delta_l\cap f^{-j}(B)|/|\Delta_l\cap f^{-j}(A)|=O\left(\epsilon^d/\delta\right).
$$

Since $\mu_{0}$ is equivalent to the Lebesgue measure on $\Delta$,
then
$$
\mu_{0}(\Delta_l\cap f^{-j}(B))=O\left({\frac{\epsilon^d}{
\delta}}\right)\mu_{0}(\Delta_l\cap f^{-j}(A)).
$$
Next we sum over all sets $A$ as above, contained in
 $f^{j}(\Delta_{l})$ and such that they have `large' image under $f^\v $.
 Since they are disjoint
 we get a contribution bounded above by
$O(\epsilon^d/\delta)\mu_{0}(\Delta_{l})$. Summing over $j$ we
get a bound $O(\epsilon^d/\delta)R_{l} \mu_{0}(\Delta_{l})$.
Finally, summing over $l$ we get the estimate:
$O(\epsilon^d/\delta)$. This ends the estimate in the good case
when segments $A$ reach `large' scale in $\v $ steps.

We next have
to gather the estimates for all the left-over bad cases. These bad
cases are dealt with by realising that they correspond to large
values of $R$, whose tail we are assuming to decay exponentially
fast. We skip the study of these cases and refer the reader to
\cite[Proposition~2.3]{Collexval} where they are treated without any
particular unidimensional argument.

Finally, collecting all the estimates, there exists $C>0$ so that
\begin{multline*}
\mu({\eveps })= O\Big({\frac{\epsilon^d}{\delta}}+\sum_{s>\xi \v
/2}\mu_{0}(R>s)\\+\v  \l(R\geq(1-\xi) \v )+\v ^{2}\mu(R\geq\xi \v )
+\v ^{2}\mu\big(R>C\log\delta^{-1}\big)\Big).
\end{multline*}
Using Lemma~\ref{lem:collet-multidim-relation-mu-Leb} and
\eqref{Y-exponential-tail} we have
$$
\mu({\eveps })=O\left({\frac{\epsilon^d}\delta}+\v^2
e^{-\alpha\theta\xi \v }+\v ^{2}\delta^{\gamma}\right)
$$
for some $1>\gamma>0$. The result follows by taking the minimum with
respect to $\delta$.
\end{proof}

Let $\{E_{\v }\}_{\v\in \N}$ be the sequence of sets defined by
$$
E_{\v }=\left\{y:\; \exists j \in\{1,\ldots,(\log \v )^{5}\},\,
|y-f^{j}(y)|\le \v ^{-1}\right\}\;.
$$

\begin{corollaryP}
There exist positive constants $C'$ and $\beta'<1$ such that for any
integer $\v $
$$
\mu(E_{\v })\le C' \,\v ^{-\beta'}\;.
$$
\end{corollaryP}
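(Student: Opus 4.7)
The natural approach is a union bound followed by the previous Proposition. Since
$$E_\v=\bigcup_{j=1}^{(\log\v)^5}\mathcal{E}_j(\v^{-1}),$$
we obtain
$$\mu(E_\v)\le\sum_{j=1}^{(\log\v)^5}\mu(\mathcal{E}_j(\v^{-1}))\le C\sum_{j=1}^{(\log\v)^5}\bigl(j^2\v^{-\eta}+e^{-\alpha' j}\bigr),$$
by applying the Proposition with its first argument set to $j$ and with $\epsilon=\v^{-1}$. The first piece sums to at most $C(\log\v)^{15}\v^{-\eta}$, which is already $O(\v^{-\beta'})$ for any $\beta'<\eta$; taking $\beta'=\eta/2$ would already suffice for this half of the estimate.

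The exponential-in-$j$ piece is the delicate point, because $\sum_{j\ge 1}e^{-\alpha' j}$ is merely a constant independent of $\v$. To extract $\v$-decay I would split the sum at a threshold $j^\star:=\lceil c\log\v\rceil$ for a small constant $c>0$ to be fixed later. For $j>j^\star$ one has $e^{-\alpha' j}\le\v^{-\alpha' c}$, so the tail contributes at most $C(\log\v)^5\v^{-\alpha' c}$, which is $O(\v^{-\beta'})$ for any $\beta'<\alpha' c$. For $j\le j^\star$ the Proposition's bound is too weak (essentially $O(1)$), and one needs a separate geometric argument: since $f$ admits a Young tower, the number $N_j$ of fixed points of $f^j$ grows at most exponentially, say $N_j\le K^j$, and the backward contraction property \eqref{Y-uniform-back-contraction} ensures that near each such fixed point the map $f^j-\mathrm{id}$ is locally invertible, so that $\mathcal{E}_j(\v^{-1})$ is contained in a union of $N_j$ sets of $d$-dimensional Lebesgue volume $O(\v^{-d})$. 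Combining with Lemma~\ref{lem:collet-multidim-relation-mu-Leb} yields $\mu(\mathcal{E}_j(\v^{-1}))\le C(K^j\v^{-d})^\theta$, and summing over $j\le c\log\v$ gives at most $Cc(\log\v)\,\v^{(c\log K-d)\theta}$, which is polynomially small in $\v$ provided $c<d/\log K$. Taking $\beta'$ less than each of $\eta$, $\alpha' c$ and $(d-c\log K)\theta$ concludes the argument.

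The main obstacle is the small-$j$ regime, where the Proposition degenerates and a direct geometric estimate must be substituted. The critical ingredients are the (at most) exponential-in-$j$ count of periodic points of period $j$, and the local invertibility of $f^j-\mathrm{id}$ near each of its fixed points with controlled Jacobian; both follow from the Young tower hypothesis but require some care to extract, and it is at this point that the choice of exponents $\alpha'$, $\eta$, $\theta$ and $K$ must be balanced in order to produce a single positive $\beta'<1$ for which $\mu(E_\v)\le C'\v^{-\beta'}$.
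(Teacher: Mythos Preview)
You correctly spot the difficulty: the sum $\sum_{j\ge 1}e^{-\alpha' j}$ is merely a constant, so the Proposition plus a union bound does not give the Corollary. (The paper itself gives no argument here, simply referring to \cite[Corollary~2.4]{Collexval}.) But your proposed fix for $j\le c\log v$ does not go through. The backward contraction hypothesis \eqref{Y-uniform-back-contraction} is a statement about the induced map $F$ via the separation time on $\Delta$; it gives no control on $Df^j-I$ for the original map $f$ at small $j$, so it cannot be invoked to deduce local invertibility of $f^j-\mathrm{id}$. More fundamentally, even if periodic points are hyperbolic and their number grows at most like $K^j$, the Lebesgue volume of $\mathcal{E}_j(\epsilon)$ is governed by $\sup_x\|(Df^j(x)-I)^{-1}\|$, for which the Young tower axioms provide no uniform bound; nothing prevents $Df^j$ from having an eigenvalue close to $1$ somewhere, in which case $\mathcal{E}_j(\epsilon)$ can be much larger than a union of $O(\epsilon^d)$-balls around fixed points.

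A cleaner route avoids periodic points altogether. Since $\Upsilon=\sup\|Df\|<\infty$, telescoping gives $|y-f^{mj}(y)|\le\tfrac{\Upsilon^{mj}}{\Upsilon-1}\,|y-f^j(y)|$, hence $\mathcal{E}_j(\epsilon)\subset\mathcal{E}_{mj}\bigl(C\Upsilon^{mj}\epsilon\bigr)$ for every $m\ge 1$. Choose $m=\lceil c\log v/j\rceil$ so that $c\log v\le mj\le 2c\log v$, and apply the Proposition at the iterate $mj$ with the inflated radius $\epsilon'=C\Upsilon^{mj}v^{-1}\le Cv^{2c\log\Upsilon-1}$. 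Provided $c<1/(2\log\Upsilon)$, both terms are polynomially small in $v$: the first is $O\bigl((\log v)^{2}(\epsilon')^\eta\bigr)=O\bigl(v^{-(1-2c\log\Upsilon)\eta}(\log v)^{2+\eta}\bigr)$ and the second is $e^{-\alpha' mj}\le v^{-\alpha' c}$. This bound is uniform in $j$, so summing over $j\le(\log v)^5$ costs only a logarithmic factor and yields $\mu(E_v)\le C'v^{-\beta'}$ for any positive $\beta'<\min\{(1-2c\log\Upsilon)\eta,\,\alpha' c\}$.
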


See \cite[Corollary~2.4]{Collexval}.

We not only need to control the set of points which recur too fast,
but also the set of points for which a neighbour recurs too fast.
For positive numbers $\psi$ and $\rho$ to be fixed below, we define
a sequence of measurable sets $\{F_{\v }\}_{\v\in \N}$  by
$$
F_{\v }=\left\{x:\; \mu\big(B_{\v ^{-\psi}}(x)\cap E_{\v
^{\psi}})\ge \kappa\,\v ^{-(d+\rho)\psi}\right\}\;.
$$

\begin{lemma}\label{lem:bad-set}
There exist positive numbers $\rho$ and $\psi$ such that
$\l\left(\bigcap_{i\geq1}\bigcup_{\v\geq i}F_\v\right)=0.$
\end{lemma}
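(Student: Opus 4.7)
The plan is to prove this via a Borel--Cantelli argument: we will show that for a judicious choice of $\psi$ and $\rho$, the series $\sum_\v \l(F_\v)$ converges, whence $\l$-a.e. $x$ belongs to only finitely many $F_\v$.

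The core ingredient will be a Fubini / Markov estimate using the previous corollary, which tells us that $\mu(E_{\v^\psi}) \leq C'\v^{-\psi\beta'}$. Specifically, I would compute, using Tonelli's theorem,
\begin{align*}
\int_\X \mu\bigl(B_{\v^{-\psi}}(x)\cap E_{\v^\psi}\bigr)\,d\l(x)
&= \int_{E_{\v^\psi}}\!\!\int_\X \I_{\{\dist(x,y)<\v^{-\psi}\}}\,d\l(x)\,d\mu(y) \\
&= \int_{E_{\v^\psi}} \l\bigl(B_{\v^{-\psi}}(y)\bigr)\,d\mu(y).
\end{align*}
Since $\X$ is a compact Riemannian manifold, there is a constant (which we may absorb into $\kappa$) such that $\l(B_\delta(y))\leq \kappa\,\delta^d$ for all $y\in\X$ and $\delta$ small. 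Combining with the corollary, this gives
\[
\int_\X \mu\bigl(B_{\v^{-\psi}}(x)\cap E_{\v^\psi}\bigr)\,d\l(x)\leq \kappa C'\,\v^{-\psi(d+\beta')}.
\]
By Markov's inequality applied to the nonnegative function $x\mapsto \mu(B_{\v^{-\psi}}(x)\cap E_{\v^\psi})$ at threshold $\kappa\v^{-(d+\rho)\psi}$, this yields
\[
\l(F_\v)\leq \frac{\kappa C'\,\v^{-\psi(d+\beta')}}{\kappa\,\v^{-(d+\rho)\psi}} = C'\,\v^{-\psi(\beta'-\rho)}.
\]

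Now I would fix the parameters: choose any $0<\rho<\beta'$ (for instance $\rho=\beta'/2$) and then choose $\psi$ large enough that $\psi(\beta'-\rho)>1$. With this choice, $\sum_{\v\geq 1}\l(F_\v)<\infty$, so the first Borel--Cantelli lemma (for the finite measure $\l$ restricted to a suitable compact subset, or directly using that $\X$ has finite Lebesgue measure) gives
\[
\l\left(\bigcap_{i\geq 1}\bigcup_{\v\geq i}F_\v\right)=0,
\]
as required.

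The argument is essentially routine once one observes the Fubini swap, so I do not expect a serious obstacle; the only point that requires mild care is to verify that the implicit constant in $\l(B_\delta(y))\leq \kappa\,\delta^d$ can be chosen uniformly in $y$ (which follows from compactness of $\X$ and smoothness of the Riemannian metric), and to coordinate the choice of $\rho$ and $\psi$ so that both the exponent in Markov's inequality works out and $\psi$ is large enough to ensure summability. The constants $\rho,\psi$ fixed here will then be the ones used in the subsequent application of this lemma.
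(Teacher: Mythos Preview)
Your argument is correct. The Fubini swap is legitimate (both measures are finite, the integrand is nonnegative and measurable), the uniform bound $\l(B_\delta(y))\le\kappa\,\delta^d$ follows from compactness, and the Markov inequality and parameter choice are clean. The resulting summability of $\sum_\v \l(F_\v)$ gives the conclusion by Borel--Cantelli.

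Your route is, however, more elementary than the one the paper indicates. The paper does not spell out a proof but refers to Collet's Lemma~2.5, noting that ``the proof uses maximal functions and a result by Hardy and Littlewood which still holds in higher dimensions.'' That approach controls $\l(F_\v)$ via the weak-type $(1,1)$ inequality for the Hardy--Littlewood maximal operator applied to $\I_{E_{\v^\psi}}$. Your observation is that, because the radius $\v^{-\psi}$ in the definition of $F_\v$ is \emph{fixed} rather than a supremum over radii, the full maximal-function machinery is unnecessary: a single Fubini computation plus Chebyshev already yields a bound of the same quality. Both approaches produce $\l(F_\v)=O(\v^{-\psi(\beta'-\rho)})$ and proceed identically from there; yours simply bypasses the maximal-function layer. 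The only thing the Hardy--Littlewood route would buy is robustness if one later needed to vary the radius, which is not required here.
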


We refer to \cite[Lemma~2.5]{Collexval} and references therein. The
proof uses maximal functions and a result by Hardy and Littlewood
which still holds in higher dimensions.

As we have seen in the proof of Corollary~\ref{cor:EPP-Poisson-BC},
it is very easy to show that $D_3(u_n)$ holds when we have decay of
correlations for observables of bounded variation. However, in this
setting, decay of correlations is only available for H\"older
continuous functions against $L^\infty$ ones, instead (see
\eqref{eq:decay-correlations-Holder}). This means that we cannot use
the test function $\phi=\I_{\{X_0>u_n\}}$, as we did before.
However, proceeding as in \cite[Lemma~3.3]{Collexval}, we use a
suitable H\"older approximation and show that the same result
follows:

\begin{lemma}
\label{lem:decay-correl-Dun} Assume that there exists a rate
function $\Theta:\N\to \R$, such that for every H\"older continuous
(or Lipschitz) observable $\phi$ and all $L^\infty$ observable
$\psi$ we have:
\[
\left| \int\phi\cdot(\psi\circ f^t)d\mu-\int\phi d\mu\int\psi
d\mu\right|\leq K_\iota(\phi)\|\psi\|_\infty \Theta(t),\quad\forall
t\in\N_0.
\]
Then, for every $\zeta\in\X$, $0<s<1$, $\eta>0$ and all measurable
set W we have
\[
\left|\mu(B_s(\zeta)\cap f^{-t}(W))-\mu(B_s(\zeta))\mu(W)\right| \le
s^{-(1+\eta)}\Theta(t)+O(s^{\theta(d+\eta)}),
\]
where $\theta$ is the number given in
Lemma~\ref{lem:collet-multidim-relation-mu-Leb}.
\end{lemma}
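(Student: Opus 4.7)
The plan is to approximate the indicator $\I_{B_s(\zeta)}$ from above and below by Lipschitz (hence H\"older) functions and then invoke the decay of correlations hypothesis with these approximants against $\psi=\I_W$. Concretely, for a parameter $\epsilon>0$ to be tuned later, set
\[
\phi_+(x):=\max\bigl\{0,\, 1-\epsilon^{-1}\,\dist(x,B_s(\zeta))\bigr\}, \qquad
\phi_-(x):=\max\bigl\{0,\, 1-\epsilon^{-1}\,\dist(x,B_{s-\epsilon}(\zeta))\bigr\},
\]
so that $\phi_-\le \I_{B_s(\zeta)}\le \phi_+$ pointwise, $\phi_+$ is supported in $B_{s+\epsilon}(\zeta)$, $\phi_-$ is supported in $B_s(\zeta)$, and $K_1(\phi_\pm)\le \epsilon^{-1}$ (in the H\"older case the appropriate constant is $\epsilon^{-\iota}$, handled analogously).

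Next I would sandwich the quantity of interest: since $\mu(B_s(\zeta)\cap f^{-t}(W))=\int \I_{B_s(\zeta)}\cdot(\I_W\circ f^t)\,d\mu$, the assumed estimate applied to $\phi_\pm$ and $\psi=\I_W$ yields
\[
\int\phi_-\,d\mu\,\cdot\mu(W)-\epsilon^{-1}\Theta(t)
\le \mu\bigl(B_s(\zeta)\cap f^{-t}(W)\bigr)
\le \int\phi_+\,d\mu\,\cdot\mu(W)+\epsilon^{-1}\Theta(t).
\]
Since $\bigl|\int\phi_\pm\,d\mu-\mu(B_s(\zeta))\bigr|\le \mu\bigl(B_{s+\epsilon}(\zeta)\setminus B_{s-\epsilon}(\zeta)\bigr)$ and $\mu(W)\le 1$, a double triangle inequality gives
\[
\bigl|\mu(B_s(\zeta)\cap f^{-t}(W))-\mu(B_s(\zeta))\mu(W)\bigr|
\le \epsilon^{-1}\Theta(t)+\mu\bigl(B_{s+\epsilon}(\zeta)\setminus B_{s-\epsilon}(\zeta)\bigr).
\]

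It remains to bound the annulus term and tune $\epsilon$. The Riemannian volume of $B_{s+\epsilon}(\zeta)\setminus B_{s-\epsilon}(\zeta)$ is $O(s^{d-1}\epsilon)$ for $\epsilon\le s$ (for $\epsilon>s$ we trivially absorb the bound into a constant since $s<1$), so by Lemma~\ref{lem:collet-multidim-relation-mu-Leb} its $\mu$-measure is $O\bigl((s^{d-1}\epsilon)^{\theta}\bigr)$. Choosing $\epsilon:=s^{1+\eta}$ then produces, on the one hand, $\epsilon^{-1}\Theta(t)=s^{-(1+\eta)}\Theta(t)$, and on the other, $(s^{d-1}\cdot s^{1+\eta})^{\theta}=s^{\theta(d+\eta)}$, which is exactly the stated bound.

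There is no substantive obstacle here: the proof is a standard smooth-approximation argument for indicator functions combined with the uniform lower bound on the exponent $\theta$ in Lemma~\ref{lem:collet-multidim-relation-mu-Leb}. The only point requiring care is to match the free parameters, namely to choose $\epsilon$ as a power of $s$ so that the Lipschitz-norm blow-up $\epsilon^{-1}$ and the $\mu$-mass of the symmetric difference balance into the two summands appearing in the statement; the choice $\epsilon=s^{1+\eta}$ achieves this, with the parameter $\eta$ in the lemma playing the same role as the margin needed to beat the factor $s^{d-1}$ coming from the annular volume.
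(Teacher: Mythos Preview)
Your proof is correct and follows essentially the same approach as the paper: approximate $\I_{B_s(\zeta)}$ by Lipschitz functions with constant of order $s^{-(1+\eta)}$ (taking $\epsilon=s^{1+\eta}$), apply the correlation bound against $\I_W$, and control the approximation error via Lemma~\ref{lem:collet-multidim-relation-mu-Leb} applied to an annulus of Lebesgue measure $O(s^{d+\eta})$. The only cosmetic difference is that the paper uses a single Urysohn-type approximant $\phi$ (equal to $1$ on $\overline{B_{s-s^{1+\eta}}(\zeta)}$ and $0$ outside $B_s(\zeta)$) together with a direct triangle inequality, whereas you sandwich $\I_{B_s(\zeta)}$ between $\phi_-$ and $\phi_+$; both routes yield the same two error terms.
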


\begin{proof}
For a fixed $\eta>0$ we build the H\"older approximation $\phi$
of $\I_{B_s(\zeta)}$. Let $B:=B_s(\zeta)$ and $D:=\overline{B_{s-s^{1+\eta}}(\zeta)}$,
 where $\bar A$ denotes the closure of $A$.
Define $\phi:\X\to\R$ as
\[
\phi(x)=\begin{cases}
  0&\text{if $x\notin B$}\\
  \frac{\dist(x,\X\setminus B)}{\dist(x,\X\setminus B)+
  \dist(x,D)}&
  \text{if $x\in B\setminus D$}\\
  1& \text{if $x\in D$}
\end{cases}.
\]
Observe that $\phi$ is H\"older  continuous (Lipschitz) with
H\"older constant $s^{-(1+\eta)}$.

Now, we apply the decay of correlations to the H\"older continuous
function $\phi$ against $\I_{W}\in L^{\infty}$ to get
\[
\left| \int\phi\cdot(\I_{W}\circ f^t)d\mu-\int\phi d\mu\int\I_{W}
d\mu\right|\leq s^{-(1+\eta)}\Theta(t).
\]
Noticing that the support of $\I_B-\phi$ is contained in $B\setminus
D$ whose Lebesgue measure is $O(s^{d+\eta})$ and using
Lemma~\ref{lem:collet-multidim-relation-mu-Leb} we get
\[
\left|\mu(B_s(\zeta)\cap f^{-t}(W))-\mu(B_s(\zeta))\mu(W)\right| \le
s^{-(1+\eta)}\Theta(t)+O(s^{\theta(d+\eta)}).
\]
\end{proof}

\begin{proof}[Proof of Theorem~\ref{thm:Collet-multi-dimensional}]
First let us show that $D_3(u_n)$ holds. Since in this setting we
have exponential decay of correlations for H\"older continuous
functions (see \eqref{eq:decay-correlations-Holder}) and
$\{X_0>u_n\}=B_{g^{-1}(u_n)}(\zeta)$ then by
Lemma~\ref{lem:decay-correl-Dun} we may take
\[
\gamma(n,t)=O\Big((g^{-1}(u_n))^{-1-\eta}\e^{-\alpha
t}+(g^{-1}(u_n))^{\theta(d+\eta)}\Big).
\]
Hence, recalling that
$g^{-1}(u_n)\sim\left(\frac{\tau}{\kappa\rho(\zeta)n}\right)^{1/d}$,
if we consider $t_n=\sqrt n$, for example, and choose $\eta$ from
Lemma~\ref{lem:decay-correl-Dun} so that $\theta(d+\eta)/d>2$ (where
$\theta$ is given by
Lemma~\ref{lem:collet-multidim-relation-mu-Leb}), then we easily get
that $n\gamma(n,t_n)\xrightarrow[n\to\infty]{} 0$ which gives
$D_3(u_n)$.

Now, it only remains to show that $D'(u_n)$ also holds. Recall
the stochastic process $X_0,X_1,\ldots$ given by
\eqref{eq:def-stat-stoch-proc} for observables defined by
  \eqref{eq:observable-form}, achieving a global maximum at
  $\zeta\in\X$. At this point, we describe the full Lebesgue measure
  set of points $\zeta\in\X$ for which
  Theorem~\ref{thm:Collet-multi-dimensional} holds. We take $\zeta$
  for which
Lebesgue's differentiation theorem holds (with respect to the measure
$\mu$) and $\zeta\in \cup_{i\geq1}\cap_{j\geq i} \X\setminus F_j$,
which by Lemma~\ref{lem:bad-set} is also a full Lebesgue measure
set. For each such $\zeta$, let $\v_0(\zeta)\in\N$ be such that
$\zeta\notin F_j$ for all $j\geq \v_0(\zeta)$.

We consider a turning instant $t=t(n)=\lfloor(\log n)^2\rfloor$, and
split the sum in $D'(u_n)$ into the period before $t$ and after $t$.

For the later we use exponential decay of correlations
\eqref{eq:decay-correlations-Holder} and
Lemma~\ref{lem:decay-correl-Dun} to get, for some $C>0$,
\begin{align*}
S_2(t,n,k)&:=n\sum_{j=t}^{\lfloor n/k\rfloor}\mu(\{X_0>u_n\}\cap
\{X_j>u_n\})\\
& \leq n\left\lfloor \frac
nk\right\rfloor\mu(X_0>u_n)^2+n\left\lfloor \frac
nk\right\rfloor(g^{-1}(u_n))^{\theta(d+\eta)} +n\left\lfloor \frac
nk\right\rfloor (g^{-1}(u_n))^{-1-\eta} C\e^{-\alpha t}.
\end{align*}
Recalling that $\mu(X_0>u_n)\sim \tau n^{-1}$ and
$g^{-1}(u_n)\sim\left(\frac{\tau}{\kappa\rho(\zeta)n}\right)^{1/d},$
we have
\[
S_2(t,n,k)=O\left(\frac1k+\frac{n^2}k
n^{-\theta(d+\eta)/d}+\frac{n^2}k n^{(1+\eta)/d}\e^{-\alpha'
\log^2(n)}\right).
\]
So, if we chose $\eta$ so that $\theta(d+\eta)/d>2$ then
$\lim_{k\to\infty}\limsup_{n\to\infty} S_2(t,n,k)=0$.

We are left with the first period from 1 to $t$ and the respective
sum
\[
S_1(t,n):=n\sum_{j=1}^{t}\mu(\{X_0>u_n\}\cap \{X_j>u_n\}).
\]
We set $\v=\v(n)=\lfloor(3g^{-1}(u_n))^{-1/\psi}\rfloor$, where
$\psi$ is given in Lemma~\ref{lem:bad-set}. Observe that
\[
\{X_0>u_n\}=B_{g^{-1}(u_n)}(\zeta)\subset B_{\v^{-\psi}}(\zeta)
\]
and, if $y\in\{X_0>u_n\}\cap \{X_j>u_n\}$, then
\[
\dist(f^j(y),y)\leq \dist(f^j(y),\zeta)+\dist(\zeta,y)\leq
2g^{-1}(u_n)<\v^{-\psi},
\]
which implies that
\begin{equation}
\label{eq:auxliary-computation2} \{X_0>u_n\}\cap \{X_j>u_n\}\subset
B_{\v^{-\psi}}(\zeta)\cap E_{\v^\psi}.
\end{equation}
We take $n$ so large that $\v=\v(n)\ge \v_0(\zeta)$. Hence
$\zeta\notin F_\v$. Using \eqref{eq:auxliary-computation2}, the
definition of $F_\v$ and the fact
$g^{-1}(u_n)\sim\left(\frac{\tau}{\kappa\rho(\zeta)n}\right)^{1/d},$
we have
\[
\mu(\{X_0>u_n\}\cap \{X_j>u_n\})=O(\v^{-\psi(d+\rho)}) =O(n^{-(d+\rho)/d}).
\]
Hence, $\limsup_{n\to\infty} S_1(t,n)\leq\limsup_{n\to\infty}
O\left(n\log^2(n)n^{-(d+\rho)/d}\right)=0$.
\end{proof}

\subsection{An Example}\label{subsec:example}

Here we present a $C^1$ open class of local diffeomorphisms with no
critical points that are non-uniformly expanding in the sense of
\cite{ABV,Alves}. Namely, let \( f: M \to M \) be a \( C^{1} \) local
diffeomorphism, we say that $f$ is non-uniformly expanding  if there exists a  constant \( \lambda
> 0 \)
 such that for Lebesgue almost all points \( x\in M \) the following
 \emph{non-uniform expansivity} condition is satisfied:
\begin{equation}\label{eq:non-unif-expand}
    \liminf_{n\to\infty}\frac{1}{n}\sum_{i=0}^{n-1}
    \log \|Df_{f^{i}(x)}^{-1}\|^{-1}\geq \lambda >0.
\end{equation} Condition \eqref{eq:non-unif-expand} implies that the \emph{expansion
time} function
\[
\mathcal E(x) = \min\left\{N: \frac{1}{n} \sum_{i=0}^{n-1} \log
\|Df^{-1}_{f^{i}(x)}\|^{-1} \geq \lambda/2 \ \ \forall n\geq
N\right\}
\]
is defined and finite almost everywhere in \( M \). We think of this
as the waiting time before the exponential derivative growth
kicks in.
We are now able to define the \emph{Hyperbolic tail set}, at time
$n\in\N$,
\begin{equation}
  \label{eq:tail_set}
  \Gamma_n=\left\{x\in I:\;
\mathcal{E}(x)>n\right\},
\end{equation}
which can be seen as the set of points that at time $n$ have not
reached a satisfactory exponential growth of the derivative. Applying \cite{ALP} and \cite{Gou2} together shows that these maps admit a Young tower whose return time tail is related to the volume decay rate of the hyperbolic tail set.

The class we consider here is obtained by deformation of a uniformly
expanding map by isotopy inside some small region. In general, these
maps are not expanding: deformation can be made in such way that the
new map has periodic saddles. We follow the construction in
\cite{ABV,Alves}.

Let $M$ be any compact Riemannian $d$-dimensional manifold supporting some uniformly expanding
map $f_0$: there exists $\sigma_0>1$ such that
$\|Df_0(x)v\|>\sigma_0\|v\|$ for every $x\in M$ and every $v\in T_x
M$. Let $V\subset M$ be small compact domain, so that $f_0|_V$ is
one-to-one. Let $f_1$ be a $C^1$ map coinciding with $f_0$
in $M \backslash V$ for which the following holds:

\begin{enumerate}

    \item $f_1$ is \textit{volume expanding everywhere}:
    there is $\sigma_1>1$ such that
    \[
    |\det Df_1(x)|>\sigma_1,\;\;\mbox{ for every } x\in M;
    \]

    \item $f_1$ is \textit{not too contracting on} $V$: there is
    small $\delta>0$ such that
    \[
    \|Df_1(x)^{-1}\|<1+\delta,\;\;\mbox{ for every } x\in V.
    \]

\end{enumerate}
We consider the class of maps $f$ in a small $C^1$-neighbourhood $\mathcal F$ of $f_1$.

In \cite[Section~6]{Alves} it was shown that these maps satisfy condition
\eqref{eq:non-unif-expand} and there exist $C,\gamma>0$ such that
$\l(\Gamma_n)\leq C\e^{-\gamma n}$ for all $n\in\N$. Now, using the results in
\cite{Gou2} this implies that every map $f\in\mathcal F$ admits a Young tower for which conditions
\eqref{Y-Markov}--\eqref{Y-exponential-tail} are satisfied. This means that we can apply
Theorem~\ref{thm:Collet-multi-dimensional} and obtain
that all assertions of Corollary~\ref{cor:HTS-Collet-higher-dim} hold for this
class of maps $\mathcal F$.

\end{document}